\theoremstyle{plain}
\newtheorem{introthm}{Theorem}
\newtheorem{theorem}{Theorem}[subsection]
\newtheorem{lemma}[theorem]{Lemma}
\newtheorem{proposition}[theorem]{Proposition}
\newtheorem{corollary}[theorem]{Corollary}
\theoremstyle{definition}
\newtheorem{definition}[theorem]{Definition}
\newtheorem{construction}[theorem]{Construction}
\newtheorem{remark}[theorem]{Remark}
\numberwithin{equation}{section}
\newcommand*\mathblank{\mathord{-}}
\newcommand{\op}{\mathrm{op}} 
\def\on{\operatorname}
\def\th{\on{t}}
\def\Cat{\on{Cat}}
\def\Fun{\on{Fun}}
\def\St{\on{St}}
\def\ECat{\EuScript{C}\!\on{at}}
\def\FCat{\mathfrak{C}\!\on{at}}
\def\Cf{\mathfrak{C}}
\def\Cfsc{\mathfrak{C}^{\on{sc}}}
\def\CC{\mathbb{C}}
\def\DD{\mathbb{D}}
\def\CCop{{\mathbb{C}^{(\op,\op)}}}
\def\ZZ{\mathbb{Z}}
\def\OO{\mathbb{O}}
\def\A{\EuScript{A}}
\def\L{\EuScript{L}}
\def\K{\EuScript{K}}
\def\U{\EuScript{U}}
\def\V{\EuScript{V}}
\def\D{\EuScript{D}}
\def\Dn{\EuScript{D}^n}
\def\Sn{\EuScript{S}^n}
\def\Lni{\EuScript{L}^n_i}
\def\Set{\on{Set}}
\def\sSet{{\on{Set}_{\Delta}}}
\def\msSet{{\on{Set}_{\Delta}^+}}
\def\B{\EuScript{B}}
\def\C{\EuScript{C}}
\def\N{\on{N}}
\def\P{\EuScript{P}}
\def\lra{\longrightarrow}
\def\lla{\longleftarrow}
\def\hra{\hookrightarrow}
\def\llra{\def\arraystretch{.1}\begin{array}{c} \lra \\ \lla \end{array}}
\def\Un{\operatorname{Un}}
\def\St{\operatorname{St}}
\def\Unsc{\operatorname{Un}^{\on{sc}}}
\def\Stsc{\operatorname{St}^{\on{sc}}}
\def\CUnsc{\operatorname{CUn}^{\on{sc}}}
\def\CStsc{\operatorname{CSt}^{\on{sc}}}
\def\cchi{\mathbb{\bbchi}}
\def\PPhi{\mathbb{\bbphi}}
\DeclareMathSymbol\DDelta\mathord{bbold}{"01}
\DeclareMathSymbol\GGamma\mathord{bbold}{"00}
\DeclareMathSymbol\SSigma\mathord{bbold}{'117}
\DeclareMathOperator\sk{sk}
\DeclareMathOperator\Hom{Hom}
\DeclareMathOperator\id{id}
\DeclareMathOperator\Nsc{N^{sc}}
\DeclareMathOperator\colim{colim}	
\DeclareMathOperator\hocolim{hocolim}
\DeclareMathOperator\Map{Map}
\newglossaryentry{chi1}{
	name={$\chi_{C}$},
	description={The relative $1$-nerve}
}
\newglossaryentry{chi2}{
	name={$\cchi_{\CC}$},
	description={The relative $2$-nerve}
}
\newglossaryentry{phi2}{
	name={$\PPhi_{\CC}$},
	description={The left adjoint to the relative $2$-nerve functor}
}
\newglossaryentry{OI}{
	name={$\OO^I$},
	description={The free $2$-category on the $I$-simplex}
}
\newglossaryentry{DI}{
	name={$D^I$, $\D^I$},
	description={Poset used in defining the $(\infty,2)$ relative nerve}
}
\newglossaryentry{AJ}{
	name={$A(J)$, $\A(J)$},
	description={Subposets of $D^I$ corresponding to faces of simplices}
}
\newglossaryentry{Lni}{
	name={$\Lni$},
	description={Simplicial subset of $\D^n$ obtained from the horn $\Lambda^n_i$ under $\PPhi_{\OO^n}$}
}
\newglossaryentry{Ladj}{
	name={$\mathfrak{L}^n_i$},
	description={The image of $(\Lambda^n_i)^\flat\to \Delta^n$ under $\PPhi_{\OO^n}$}
}
\newglossaryentry{Dadj}{
	name={$\mathfrak{D}^n$},
	description={The image of $(\Delta^n)^\flat\to \Delta^n$ under $\PPhi_{\OO^n}$}}
\newglossaryentry{Stsc}{
	name={$\St_{\CC}$},
	description={Scaled straightening functor}
}
\newglossaryentry{Unsc}{
	name={$\Un_{\CC}$},
	description={Scaled unstraightening functor}
}
\newglossaryentry{Pn}{
	name={$P^n(S,T)$, $\P^n(S,T)$},
	description={Poset computing the homotopy type of mapping spaces in $\D^n$}
}
\newglossaryentry{x}{
	name={$x$},
	description={Embedding of $\D^n$ into the lattice $\ZZ^{n-1}$}
}
\newglossaryentry{etaC}{
	name={$\eta_{\CC}$},
	description={Comparison map between scaled straightening and the left adjoint to the relative nerve}
}
\newglossaryentry{BJ}{
	name={$\B(J)$},
	description={Thickened version of $\A(J)$}
}
\newglossaryentry{pI}{
	name={$p_I$, $\pi_I$},
	description={Functors comparing $\D^I$ to $\Delta^I$}
}
\newglossaryentry{rho}{
	name={$\rho_{I,J}$},
	description={Pullback functors, which relate $\D^I$ to $\D^J$}}
\renewcommand{\glossarysection}[2][]{}
\title{A relative $2$-nerve}
\author{Fernando Abell\'an Garc\'ia, Tobias Dyckerhoff, Walker H. Stern}
\begin{document}
\maketitle

\begin{abstract}
	In this work, we introduce a $2$-categorical variant of Lurie's relative nerve functor.  We
	prove that it defines a right Quillen equivalence which, upon passage to
	$\infty$-categorical localizations, corresponds to Lurie's scaled unstraightening
	equivalence. In this $\infty$-bicategorical context, the relative $2$-nerve provides a
	computationally tractable model for the Grothendieck construction which becomes equivalent,
	via an explicit comparison map, to Lurie's relative nerve when restricted to $1$-categories. 
\end{abstract}

\tableofcontents

\newpage
\section*{Introduction}
\addcontentsline{toc}{section}{Introduction}

Let $F: C^{\op} \to \Cat$ be a contravariant functor from a category $C$ into the category $\Cat$ of
small categories. The {\em classical Grothendieck construction} of $F$ is the category $\chi(F)$ with objects
given by pairs $(c,x)$, where $c \in C$ and $x \in F(c)$, and a morphism between objects $(c,x)$ and
$(c^\prime,x^\prime)$ consisting of a pair $(f,\eta)$, where $f:c \to c^\prime$ is a morphism in $C$ and $\eta: x \to
F(f)(x^\prime)$ is a morphism in the category $F(c)$.
This data admits an apparent composition law and the resulting category comes equipped with a forgetful
functor $\chi(F) \to C$ which is a Cartesian fibration. 
It is a classical result that the Grothendieck construction $\chi$ establishes an equivalence between
appropriately defined categories of 
\begin{enumerate}[label=(\Roman{*})]
	\item pseudo-functors $C^{\op} \to \Cat$,
	\item Cartesian fibrations over $C$.
\end{enumerate}
The construction originally appeared in the foundations of Grothendieck's theory of descent
\cite[Expos\'{e} VI]{Grothendieck:SGA1} where descent data is described in terms of sections of
Cartesian fibrations.\\

\noindent
{\bf The relative nerve.} 
Let $\sSet$ denote the category of simplicial sets and let $\Cat_{\infty} \subset
\sSet$ denote the full subcategory spanned by the $\infty$-categories. 
Generalizing the above, we may define the Grothendieck construction
of a contravariant functor 
\[
F: C^{\op} \lra \Cat_{\infty}
\]
from $C$ into the category $\Cat_{\infty}$
to be the simplicial set $\chi(F)$ in which an $n$-simplex is given by
\begin{enumerate}[label=\arabic*.]
	\item an $n$-simplex $\sigma: [n] \to C$ of the nerve of $C$,
	\item for every nonempty subset $I \subset [n]$, a map of simplicial sets $\Delta^I \to
	F(\sigma(\min(I)))$,
\end{enumerate} 
such that, for every $I \subset I' \subset [n]$, the diagram
\[
\begin{tikzcd}
\Delta^I \ar{r}\ar{d} & F(\sigma(\min(I))) \ar{d}\\
\Delta^{I'} \ar{r} & F(\sigma(\min(I'))).
\end{tikzcd}
\]
commutes. In the case when $F$ takes values in $\Cat \subset \Cat_{\infty}$, fully faithfully
embedded via the nerve, then $\chi(F)$ will be the nerve of the classical Grothendieck construction.
It is proven in \cite[\S 3.2]{LurieHTT}, where $\chi(F)$ is called the {\em relative nerve of $F$},\glsadd{chi1} that
this construction extends to a Quillen equivalence
\[
\phi: (\msSet)_{/\N(C)} \llra \Fun(C^{\op},\msSet): \chi
\]
which, upon passage to $\infty$-categorical localizations, identifies the $\infty$-categories of
\begin{enumerate}[label=(\Roman*)]
	\item functors between the $\infty$-category $\N(C^{\op})$ and the $\infty$-category $\ECat_{\infty}$ of small
	$\infty$-categories,
	\item Cartesian fibrations over $\N(C)$.
\end{enumerate}
In fact, Lurie \cite{LurieHTT} proves a much more sophisticated variant of this equivalence,
governed by certain constructions called straightening and unstraightening, where $\N(C)$ can be replaced by an
arbitrary $\infty$-category $\C$.\\

\noindent
{\bf The relative $2$-nerve.} 
In the present work, we provide a generalization of the relative nerve to
accommodate functors
\[
F: \CC^{(\op,\op)} \lra \Cat_{\infty}
\]
where $\CC$ is a $\Cat$-enriched category and the functor $F$ is $\Cat_{\infty}$-enriched (leaving
the inclusion $\Cat \subset \Cat_{\infty}$ via the nerve implicit). 
To such a functor $F$, we associate a simplicial set $\cchi(F)$, equipped with a forgetful map $\pi:
\cchi(F) \to \Nsc(\CC)$ to the scaled nerve of $\CC$, the latter being a $2$-categorical version of
the Street nerve as introduced in \cite{street}. This {\em relative $2$-nerve} construction $\cchi$
is an adaptation of $\chi$ to the given $2$-categorical context; for example, a $2$-simplex in
$\cchi(F)$ consists of
\begin{enumerate}[label=\arabic*.]
	\item a $2$-simplex $\sigma: \Delta^2 \to \Nsc(\CC)$ of the scaled nerve of $\CC$ whose data we
	label by
	\[
	\begin{tikzcd}[sep=2em]
	&|[alias=X]|{1}\arrow{dr}{12}   &  \\
	0 \arrow[""{name=foo},swap]{rr}{02}\arrow{ur}{01}\arrow[Rightarrow, from=foo,
	to=X, shorten <=.3cm,shorten >=.3cm, "\alpha"] & & {2}
	\end{tikzcd}
	\]
	\item vertices $x_0$,$x_1$, and $x_2$ in the $\infty$-categories $F(0)$, $F(1)$, and $F(2)$, respectively, 
	\item edges 
	\begin{itemize}
		\item $f_{01}: x_0 \to F(01)(x_1)$ in $F(0)$,
		\item $f_{02}:x_0 \to F(02)(x_2)$ in $F(0)$,
		\item $f_{12}:x_1 \to F(12)(x_2)$ in $F(1)$, 
		\item $f_{12 \circ 01}:x_0 \to F(12 \circ 01)(x_2)$ in $F(0)$, 
	\end{itemize}
	\item a $3$-simplex 
	\begin{center}
		\includegraphics{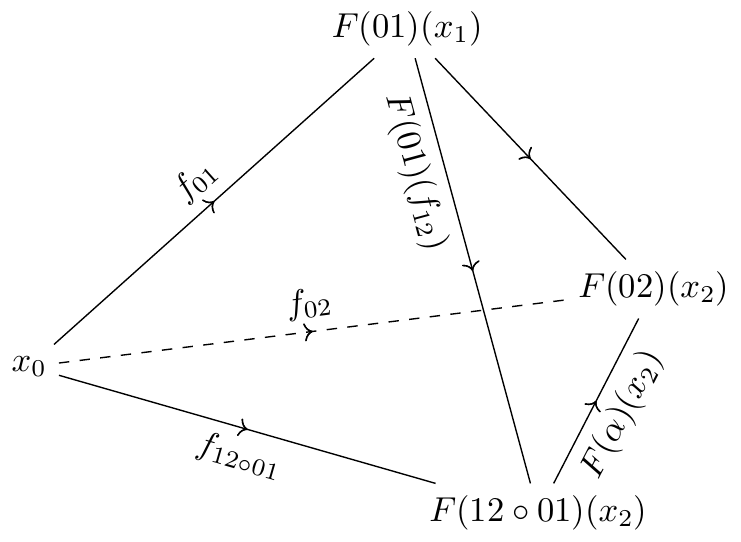}
	\end{center}
	in the $\infty$-category $F(0)$.
\end{enumerate}
The description of the higher dimensional $n$-simplices of $\cchi(F)$ requires some notational
preparation and will be given in \S \ref{sec:const}. Pictorially, just as in the case $n=2$, they
correspond to diagrams parametrized by blown-up versions the standard $n$-simplices (cf. Figure
\ref{fig:posets}). 

Our main result concerning the relative $2$-nerve $\cchi$ is the following:

\begin{introthm}\label{introthm:1}
	The functor $\cchi$ extends to a Quillen equivalence 
	\[
		\PPhi: (\msSet)_{/\Nsc(\CC)} \llra \Fun_{\msSet}(\CC^{(\op,\op)}, \msSet): \cchi
	\]
	modelling an equivalence between $\infty$-categories of
	\begin{enumerate}[label=(\Roman*)]
		\item\label{I} functors between the $\infty$-bicategory $\Nsc(\CC^{(\op,\op)})$ and the
		$\infty$-bicategory $\FCat_{\infty}$ of small $\infty$-categories,
		\item\label{II} locally Cartesian fibrations over $\Nsc(\CC)$ that are Cartesian
		over every scaled triangle.
	\end{enumerate}
\end{introthm}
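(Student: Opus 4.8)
The plan is to deduce the theorem from Lurie's scaled straightening–unstraightening equivalence by comparing left adjoints, and I would carry this out in four stages. First I would check that $(\PPhi,\cchi)$ is a Quillen adjunction for the relevant model structures: the one on $(\msSet)_{/\Nsc(\CC)}$ whose fibrant objects are the locally Cartesian fibrations that are Cartesian over every scaled triangle, and the projective model structure on $\Fun_{\msSet}(\CC^{(\op,\op)},\msSet)$. Since $\cchi$ is defined representably, its left adjoint $\PPhi$ is computed by left Kan extension and is therefore determined by its values on the generating maps $\Delta^n\to\Nsc(\OO^n)$, where $\OO^n$ is the free $2$-category on the $n$-simplex. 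The combinatorial core here is the identification of $\PPhi_{\OO^n}$ on these representables with the functor represented by the poset $\D^n$, together with the analysis of the subsets $\Lni$ and of the images $\mathfrak{L}^n_i$ and $\mathfrak{D}^n$ of horns and simplices; from these descriptions one reads off that $\PPhi$ carries the generating cofibrations and trivial cofibrations to cofibrations and trivial cofibrations, respectively.

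The heart of the argument is a natural comparison map $\eta_{\CC}\colon \St_{\CC}\Rightarrow \PPhi_{\CC}$ relating Lurie's scaled straightening to the left adjoint of the relative $2$-nerve. As both functors preserve colimits, it suffices to show that $\eta_{\CC}$ is a weak equivalence on the representable generators, and via the pullback functors $\rho_{I,J}$ together with the comparison functors $p_I,\pi_I$ between $\D^I$ and $\Delta^I$, this reduces to the universal case $\CC=\OO^n$. There $\PPhi_{\OO^n}$ is represented by $\D^n$, whose mapping spaces are computed by the posets $\P^n(S,T)$, while $\St_{\OO^n}$ delivers the cubes appearing in scaled straightening. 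I would then compute the homotopy type of $\P^n(S,T)$ --- using the embedding $x\colon \D^n\hookrightarrow\ZZ^{n-1}$ and a poset-collapsing argument (e.g.\ via maps with contractible fibers) --- and match it with the corresponding cube, so that $\eta_{\CC}$ is a weak equivalence on cofibrant objects.

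Granting this, the left Quillen functors $\St_{\CC}$ and $\PPhi_{\CC}$ have naturally equivalent left derived functors; since $(\St_{\CC},\Un_{\CC})$ is a Quillen equivalence by Lurie's theorem, the two-out-of-three principle for Quillen equivalences forces the same conclusion for $(\PPhi,\cchi)$. Passing to localizations, the functor model category presents the $\infty$-bicategory of functors $\Nsc(\CC^{(\op,\op)})\to\FCat_{\infty}$, namely case \ref{I}, while the fibrant objects over $\Nsc(\CC)$ present the locally Cartesian fibrations that are Cartesian over every scaled triangle, namely case \ref{II}; the derived equivalence yields the stated identification.

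I expect the principal obstacle to lie in the homotopical analysis of the mapping posets $\P^n(S,T)$ in the second stage: one must show, uniformly in $S$ and $T$ and compatibly with composition and with the scaling of thin triangles, that these blown-up simplices carry exactly the homotopy type of the cubes prescribed by scaled straightening, so that $\eta_{\CC}$ is a genuinely natural --- and not merely pointwise --- weak equivalence. Tracking the scaling throughout is essential, since it is precisely the condition ``Cartesian over every scaled triangle'' that is encoded on the fibration side and that distinguishes case \ref{II}, and a mismatch here would break the equivalence.
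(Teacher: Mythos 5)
Your overall architecture---construct a comparison map $\eta_{\CC}\colon \St_{\CC}\Rightarrow\PPhi_{\CC}$, reduce to the universal cases over $\Nsc(\OO^n)$, and conclude by two-out-of-three for Quillen equivalences against Lurie's theorem---is exactly the paper's strategy, and your identification of $\PPhi_{\OO^n}((\Delta^n)^\flat)$ with the functor built from the posets $\D^{[j,n]}$ matches Lemma \ref{lem:adjtoLambd}. However, there is a genuine gap at the technical heart. First, the reduction ``as both functors preserve colimits, it suffices to check $\eta_{\CC}$ on representables'' is not valid as stated: weak equivalences are not preserved by arbitrary colimits, and the correct statement (Lemma \ref{lem:eq}) is a skeletal induction using left properness and cofibration preservation whose hypotheses require, as an \emph{input}, that $\PPhi$ carries the minimally marked inner horn inclusions $(\Lambda^n_i)^\flat\to(\Delta^n)^\flat$ to weak equivalences. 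That statement (Proposition \ref{prop:innhorn}) is the main technical theorem of the paper: it amounts to showing that $\Lni\hookrightarrow\Dn$ is a Joyal trivial cofibration, i.e.\ that every mapping space $\Cf[\Lni](S,T)$ is contractible, proved by a delicate induction on atomic distance using the embedding $x\colon D^n\hookrightarrow\ZZ^{n-1}$, the covering of $\Lni$ by the $\A(J)$ for $i$-superior $J$, and homotopy colimit decompositions. Your proposal omits this entirely. Relatedly, your stage-one claim that one ``reads off'' that $\PPhi$ sends generating trivial cofibrations to trivial cofibrations is unsupported: the generating trivial cofibrations of the scaled Cartesian model structure are not just inner horns, and the paper deliberately avoids checking them directly, instead deducing weak-equivalence preservation of $\PPhi$ from the comparison with $\St_{\CC}$ after the fact.

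Second, where you propose to do the hard work---directly matching the homotopy type of $\P^n(S,T)$ against the cubes produced by scaled straightening, uniformly in $n$---you are attacking a computation the paper never performs and slightly misplacing the role of $\P^n(S,T)$: those posets model the mapping spaces of $\Cf[\K]$ for subcomplexes $\K\subset\Dn$ (Proposition \ref{prop:mapping}) and are used for the horn-filling argument, not for the straightening comparison. The paper only verifies $\eta$ explicitly in dimensions $n\le 1$ (where $\St_{\OO^1}((\Delta^1)^\flat)(0)$ is a marked right horn collapsing onto $\D^{[0,1]}$) and lets Lemma \ref{lem:eq} bootstrap the higher simplices by two-out-of-three against the inner horn fillings. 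A direct comparison for all $n$ might be feasible, but it would be a substantial independent argument that your proposal does not supply; without either it or the contractibility of $\Cf[\Lni](S,T)$, the proof does not close.
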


We emphasize that a much more general version of the equivalence between \ref{I} and \ref{II} in
Theorem \ref{introthm:1}, where $\Nsc(\CC)$ is allowed to be an arbitrary $\infty$-bicategory, has been
proven by Lurie in \cite{LurieGoodwillie} using scaled variants of the straightening and
unstraightening constructions from \cite{LurieHTT}. In fact, our proof of Theorem \ref{introthm:1}
reduces the statement to Lurie's equivalence by means of the following comparison result:

\begin{introthm}\label{introthm:2}
	The functor $\PPhi$ is weakly equivalent to a contravariant version of Lurie's scaled
	straightening functor.
\end{introthm}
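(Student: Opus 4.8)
The plan is to exhibit an explicit natural comparison map $\eta_{\CC}$ between the (contravariant) scaled straightening functor, which I denote $\Stsc_{\CC}$, and $\PPhi$, and then to verify that $\eta_{\CC}$ is a weak equivalence by reducing, via colimit-preservation and naturality, to an explicit combinatorial computation over the free $2$-categories $\OO^n$. Since both $\PPhi$ and the contravariant scaled straightening are left Quillen functors $(\msSet)_{/\Nsc(\CC)} \to \Fun_{\msSet}(\CCop, \msSet)$, they preserve colimits and cofibrations, and in the relevant model structures every object is cofibrant; it therefore suffices to produce $\eta_{\CC}$ as a natural transformation of such left adjoints and to show that it is an objectwise weak equivalence.

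First I would construct $\eta_{\CC}$. Both functors are determined by their restrictions to the scaled, marked simplices $\sigma \colon \Delta^n \to \Nsc(\CC)$, which generate the slice under colimits, so a natural transformation is pinned down by a compatible family of maps on these generators; equivalently, by the mate correspondence, $\eta_{\CC}$ is given by a natural transformation $\cchi_{\CC} \Rightarrow \Unsc_{\CC}$ between the two right adjoints, which can be written down on each simplex from the explicit description of the relative $2$-nerve $\cchi$. I would fix the direction of $\eta_{\CC}$ and check its naturality both in the argument and in $\CC$.

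Next comes the reduction to generators. Because $\PPhi$ and scaled straightening both send cofibrations to cofibrations and preserve all colimits, a standard cell-induction—attaching simplices along their boundaries and invoking the gluing lemma for weak equivalences of cofibrant objects together with left properness—reduces the claim that $\eta_{\CC}$ is an objectwise weak equivalence to the case of a single representable $\sigma \colon \Delta^n \to \Nsc(\CC)$. Naturality of $\eta_{\CC}$ in $\CC$ then permits a further reduction to the universal case $\CC = \OO^n$ evaluated on the top cell $\Delta^n \to \Nsc(\OO^n)$: every simplex factors through its classifying $2$-functor $\OO^n \to \CC$, and the base-change functors $\rho_{I,J}$ together with the comparisons $p_I, \pi_I$ relating $\D^I$ to $\Delta^I$ transport the computation back along such classifying maps. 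It thus suffices to show that $\eta_{\OO^n}$ is a weak equivalence when evaluated at each vertex of $(\OO^n)^{(\op,\op)}$.

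The heart of the argument—and the step I expect to be the main obstacle—is this final objectwise computation. Evaluating $\PPhi_{\OO^n}(\Delta^n)$ at a vertex yields a marked simplicial set built from the poset $\D^n$ and its subposets $\A(J)$, $\B(J)$, whose mapping spaces are governed by the auxiliary posets $\P^n(S,T)$; evaluating scaled straightening yields the corresponding $\Stsc$-mapping complex. The lattice embedding $x \colon \D^n \hookrightarrow \ZZ^{n-1}$ is the tool that makes the combinatorics of these posets tractable, and the thickened posets $\B(J)$ serve to interpolate between the strict and homotopical descriptions. I would show that $\eta_{\OO^n}$ induces a weak homotopy equivalence on underlying simplicial sets by proving that the nerves $\Nerv(\P^n(S,T))$ have the expected homotopy type—typically weak contractibility of the fibers of the comparison—and then upgrade this to an equivalence of \emph{marked} simplicial sets by checking that $\eta$ reconciles the two marking and scaling conventions. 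The real difficulty, relative to Lurie's $1$-categorical relative nerve, is precisely that the mapping objects here are genuinely higher-dimensional rather than discrete, so that controlling the homotopy type of $\P^n(S,T)$, its functoriality in $S$ and $T$, and the compatibility of markings constitutes the bulk of the work.
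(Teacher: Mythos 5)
Your overall architecture matches the paper's: construct a comparison map $\eta_{\CC}\colon \St_{\CC}\Rightarrow \PPhi_{\CC}$ on the generating simplices, extend it by naturality and colimit-preservation, and reduce the objectwise weak-equivalence claim by cell induction to simplices over the universal cases $\OO^n$. The divergence --- and the gap --- is in the final step. The paper does \emph{not} verify the comparison on $(\Delta^n)^{\flat}\to\Nsc(\OO^n)$ directly for all $n$. It verifies it only for $n\le 1$ (where, e.g., $\St_{\OO^1}((\Delta^1)^{\flat})(0)$ is a marked right horn collapsing onto $\D^{[0,1]}$) and then bootstraps via Lemma \ref{lem:eq}: since inner horn inclusions $(\Lambda^n_i)^{\flat}\to(\Delta^n)^{\flat}$ are trivial cofibrations over $\Nsc(\OO^n)$, the left Quillen functor $\St$ sends them to trivial cofibrations automatically, and provided $\PPhi$ sends them to weak equivalences too, two-out-of-three propagates the comparison from dimension $n-1$ to dimension $n$. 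The essential input is therefore Proposition \ref{prop:innhorn}, i.e.\ Theorem \ref{thm:contractible}: the contractibility of $\Cf[\Lni](S,T)$, which is where all of the combinatorics of $D^n$, the lattice embedding $x$, the $i$-superior covers, and the chain posets $\P^n(S,T)$ actually live. Your proposal omits this ingredient entirely.

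In its place you propose a direct comparison of $\St_{\OO^n}((\Delta^n)^{\flat})(j)$ with $\D^{[j,n]}$ for all $n$, to be carried out ``by proving that the nerves of $\P^n(S,T)$ have the expected homotopy type.'' This misidentifies the role of those posets: $\P^n_{\K}(S,T)$ computes mapping spaces in the rigidification $\Cf[\K]$ of subcomplexes $\K\subseteq\D^n$, which is relevant to the horn-filling property of $\PPhi$, not to the straightening side, whose value at $j$ is the (opposite of the) mapping space out of $j$ in a rigidified scaled cone --- a nerve of chains in $[n]\times[1]$, a different object. Two further steps are asserted without justification: that contractibility of fibers of the comparison map yields a weak equivalence (for a general map of simplicial sets this requires something like Quillen's Theorem A or properness, which you do not supply), and that matching the markings upgrades an underlying weak homotopy equivalence to an equivalence in the marked model structure (it does not in general; even in the $n=1$ case the paper must argue that collapsing a \emph{marked} edge of $(\Lambda^2_2)^{\dag}$ is a marked equivalence). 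To repair the proof along the paper's lines you would need to isolate and prove the statement that $\Lni\hookrightarrow\D^n$ is a trivial cofibration for the Joyal model structure, and then replace your direct all-$n$ computation with the $n\le 1$ check plus the bootstrapping lemma.
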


The benefit of the functor $\cchi$ over the rather unwieldy scaled unstraightening functor is its
comparatively simple and intuitively clear form which makes it more tractable in explicit
computations. For example, it has already been utilized for precisely this reason in
\cite{DyckerhoffDoldKan}, and we expect it to become a useful tool in controlling $\infty$-bicategorical
limits and colimits. 

We conclude the paper with a comparison of the relative $2$-nerve and the ordinary relative nerve. 

\begin{introthm}\label{introthm:3}
	Let $C$ be an ordinary category. Then, for every functor $F: C^{\op} \to \Cat_{\infty}$,
	there is an explicitly defined equivalence 
	\[
		\chi(F) \overset{\simeq}{\lra} \cchi(F)
	\]
	of Cartesian fibrations over $\N(C)$, natural in $F$.
\end{introthm}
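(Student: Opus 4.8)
The plan is to construct the comparison map directly and then verify it is a fiberwise equivalence of Cartesian fibrations. First I would record the simplification that occurs for a $1$-category: viewing $C$ as a \emph{locally discrete} $\Cat$-enriched category, its scaled nerve is the ordinary nerve $\N(C)$ with every triangle thin, so that a scaled simplex $\Delta^n \to \Nsc(C)$ is simply an $n$-simplex $\sigma$ of $\N(C)$ and $F$ is an honest \emph{strict} functor. Consequently the composition $2$-cells $\alpha$ that appear in the definition of $\cchi$ are identities, $F(\alpha) = \id$, and the corresponding thickening directions of the indexing posets $\D^I$ are collapsed by $F$. This is the geometric input that lets the low-dimensional $\chi$-data determine the full blown-up $\cchi$-data.

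Next I would build the map $\Theta_F \colon \chi(F) \to \cchi(F)$ using the comparison functors $\pi_I \colon \D^I \to \Delta^I$. Given an $n$-simplex of $\chi(F)$ --- an $n$-simplex $\sigma$ of $\N(C)$ together with a coherent family of maps $\Delta^I \to F(\sigma(\min I))$ --- precomposition with $\pi_I$ produces maps $\D^I \to F(\sigma(\min I))$. The key compatibility to check is that the $\pi_I$ intertwine the restriction maps along $\Delta^J \hookrightarrow \Delta^I$ with the pullback functors $\rho_{I,J}$ relating the posets $\D^I$; granting this, the precomposed family assembles into the blown-up datum of an $n$-simplex of $\cchi(F)$ lying over the same $\sigma$. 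This defines $\Theta_F$ as a map of simplicial sets over $\N(C)$, and its naturality in $F$ is immediate, since $\pi_I$ does not depend on $F$ while a transformation $F \to F'$ acts only by post-composition.

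It then remains to see that $\Theta_F$ is an equivalence of Cartesian fibrations. By Theorem \ref{introthm:1}, $\cchi(F) \to \Nsc(C)$ is locally Cartesian and Cartesian over every scaled triangle; since every triangle of $\N(C)$ is thin, this upgrades to a genuine Cartesian fibration, matching the Cartesian fibration $\chi(F) \to \N(C)$ of Lurie. I would verify that $\Theta_F$ carries Cartesian edges to Cartesian edges --- in both models an edge over $f \colon c \to c'$ is Cartesian exactly when its component in $F(c)$ is an equivalence, and $\Theta_F$ preserves this component --- so that $\Theta_F$ is a map of Cartesian fibrations over $\N(C)$. By the standard criterion that such a map is a categorical equivalence as soon as it is a fiberwise equivalence, it suffices to analyze the fibers over a vertex $c$: there $\sigma$ is constant, the relevant category is always $F(c)$, and $\Theta_F$ restricts to the map $F(c) \to \cchi(F)_c$ given by precomposition along $\pi_n \colon \D^n \to \Delta^n$. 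Using the mapping-space computation encoded in the posets $\P^n(S,T)$ --- which shows the $\pi_n$ to induce equivalences on mapping objects --- this restriction is a categorical equivalence, whence $\Theta_F$ is an equivalence.

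The main obstacle I anticipate is the well-definedness in the second step: one must show that the precomposed family genuinely satisfies the coherence and thinness conditions defining a simplex of $\cchi(F)$, not merely that it is a formal diagram. Concretely, for $n = 2$ the coherence $3$-simplex drawn in the Introduction must be exhibited as the degenerate one determined by the filling triangle of the corresponding $\chi(F)$-simplex; this is where strictness of $F$ on the ordinary category $C$ is essential, since it forces $F(\alpha) = \id$ and collapses the extra vertex $F(12 \circ 01)(x_2)$ onto $F(02)(x_2)$. Once the intertwining of the $\pi_I$ with the $\rho_{I,J}$ and these degeneracy identifications are established, the remaining verifications --- simpliciality, the over-$\N(C)$ condition, preservation of Cartesian edges, and the fiberwise analysis --- are routine.
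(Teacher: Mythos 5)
Your construction of the comparison map by precomposition along the $\pi_I$, the verification that the precomposed family is coherent (using that $F$ is a strict functor because $C$ is a $1$-category), and the reduction to a fiberwise statement over the vertices of $\N(C)$ all coincide with the paper's argument. The gap is in your final step. The fiber of $\cchi_C(F)$ over a vertex $c$ is $\cchi_*(F(c))$, the relative $2$-nerve over the terminal $2$-category, whose $n$-simplices are compatible families of maps $\D^I \to F(c)$; this is not a simplicial set of the form $\Cf[\K]$ for $\K \subset \D^n$, so the posets $\P^n(S,T)$ --- which compute mapping spaces of the rigidifications $\Cf[\K]$ and are used only to prove Proposition \ref{prop:innhorn} --- say nothing directly about the mapping spaces of $\cchi_*(F(c))$. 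The assertion that ``the $\pi_n$ induce equivalences on mapping objects'' of the fiber is therefore not established by anything you cite. What you actually need is that for every fibrant marked simplicial set $X$ the map $X \cong \chi_*(X) \to \cchi_*(X)$ is a marked equivalence, and a direct computation of the mapping spaces of $\cchi_*(X)$ is substantially harder than the chain-poset computation suggests.

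The paper closes exactly this gap by dualizing: a natural transformation between the right adjoints of two Quillen equivalences is a weak equivalence on fibrant objects if and only if its mate between the left adjoints is a weak equivalence on cofibrant objects, so it suffices to show that $\pi_!: \PPhi_* \Rightarrow \phi_*$ is a weak equivalence. This is precisely the situation of Lemma \ref{lem:eq}: $\PPhi_*$ sends inner horn inclusions to weak equivalences by Proposition \ref{prop:innhorn}, so one only has to check $\pi_!$ on $(\Delta^0)^\flat$, $(\Delta^1)^\flat$ and $(\Delta^1)^\sharp$, where it is an isomorphism of marked simplicial sets. If you wish to keep your fiberwise formulation, you should route the final step through this adjoint argument (or otherwise import Proposition \ref{prop:innhorn}); the $\P^n(S,T)$ machinery enters only there, one level down from where you invoke it.
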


We note that a variant of the Grothendieck construction for $\Cat$-enriched functors $\CC
\to \Cat$ has been introduced and related to the scaled unstraightening functor in \cite{Harpaz}.
Passing through appropriate $\op$'s, our posets $D^I$ correspond to the categories
$|\Delta\!\!\!\!\Delta^n_{/i}|_1$ appearing in \cite{Harpaz} so that our functor $\cchi_{\CC}$ can be
regarded as a direct generalization.

\subsection*{Acknowledgements}

We are grateful to an anonymous referee for the very careful reading which helped improve the
article. T.D. and F.A.G. acknowledge the support of the VolkswagenStiftung through the Lichtenberg
Professorship Programme. The research of T.D. is further supported by the Deutsche
Forschungsgemeinschaft under Germany‘s Excellence Strategy – EXC 2121 „Quantum Universe“ –
390833306. 

\subsection*{Structure of the paper}

In \S \ref{sec:prelim} we provide a brief review of the necessary background on marked simplicial
sets and scaled simplicial sets, the model structures which play a role in the paper, as well as the
scaled (un)straightening constructions. We then define the relative $2$-nerve and attendant
combinatorial constructions in \S \ref{sec:const}. 
The proof of Theorem \ref{introthm:1} occupies most of the rest of the paper and it is achieved by
means of a comparison map with Lurie's straightening functor. The technical heart of the paper
consists of \S \ref{sec:fibinn} where we address the key ingredients necessary for the proof of our
main theorem. We show that the left adjoint to the relative 2-nerve maps inner horn inclusions to
projective trivial cofibrations, establishing a weak equivalence between said adjoint and Lurie's
scaled straightening functor. 

The subsequent arguments in \S \ref{sec:qe} are then comparatively routine: We show Theorem
\ref{introthm:2}, obtaining as a corollary the full statement of Theorem \ref{introthm:1}. We
conclude in \S \ref{sec:1nerve} with the comparison of the $1$-categorical and $2$-categorical
relative nerves as stated in Theorem \ref{introthm:3}.

\subsection*{Glossary}

For ease of reading, we provide a (non-comprehensive) list of notation appearing in this paper, as
well as references to the relevant pages.

\printglossaries

\section{Preliminaries}\label{sec:prelim}

\subsection{$2$-categories}

We denote by $\Cat$ the $1$-category of small categories. We treat $\Cat$ as a symmetric monoidal
category by virtue of the Cartesian product and define a $2$-category to be a $\Cat$-enriched
category. Further, a $2$-functor between $2$-categories is defined to be a $\Cat$-enriched functor. 

Since a $2$-category has both $1$- and $2$-morphisms, there are two different ways of taking
opposite categories. For a $2$-category $\CC$, we denote by $\CC^{(\op,-)}$ the $2$-category in
which the directions of the $1$-morphisms have been reversed, and by $\CC^{(-,\op)}$ the
$2$-category in which the directions of the $2$-morphisms have been reversed. Clearly,
$(\CC^{(\op,-)})^{(-,\op)}\cong (\CC^{(-,\op)})^{(\op,-)}$, and we will denote this $2$-category by
$\CC^{(\op,\op)}$.   

Given a $2$-category $\CC$ and an object $c \in \CC$, we define its \emph{over $2$-category} $\CC_{c/}$ to have: 
\begin{itemize}
	\item Objects given by morphisms $f:c\to d$ in $\CC$. 
	\item $1$-morphisms from $f: c\to d$ to $f': c\to d'$ given by commutative diagrams
	\[
	\begin{tikzcd}
	&c\arrow[dl, "f"']\arrow[dr, "f^\prime", ""{name=D,inner sep=1pt}] & \\
	|[alias=Z]| d\arrow[rr, "g"'] \arrow[shorten <=.9cm,shorten >=.5cm, Rightarrow, from=Z, to=D, "\alpha"]& & d^\prime 
	\end{tikzcd}
	\]
	\item $2$-morphisms from $(g,\alpha)$ to $(h,\beta)$ given by a $2$-morphism
		$\mu:g \Rightarrow h$ in $\CC$ satisfying $\alpha = \beta \circ (\mu *
		\id_f)$.
\end{itemize}
We moreover define the \emph{homotopy category} $|\mathbb{C}|_1$ of a $2$-category $\CC$ to be the ordinary
$1$-category with 
\begin{itemize}
	\item the same objects as $\mathbb{C}$,
	\item for objects $c,c' \in \mathbb{C}$, the set of morphisms
	\[
	|\mathbb{C}|_1(c,c')=\pi_0 |\mathbb{C}(c,c')|,
	\]
	where $|-|$ denotes the geometric realization of the nerve.
\end{itemize} 

\subsection{Marked simplicial sets}
\label{subsec:marked}

Following \cite{LurieHTT}, we define a \emph{marked simplicial set} to be a simplicial set $X$ together
with a chosen subset $E \subset X_1$ of {\em marked edges} containing all degenerate edges. We will sometimes use the notation $\overline{X}=(X,E)$ for a marked simplicial set. We denote the category
of marked simplicial sets by $\msSet$. For a simplicial set $K$, we denote by $K^{\flat}$ (resp.
$K^{\sharp}$) the marked simplicial set with marked edges given by the degenerate edges (resp. all
edges). The category $\msSet$ is Cartesian closed so that, for every pair of objects $X,Y$ in
$\msSet$, there exists an internal mapping object which we denote by $\Map(X,Y)$. There are
two related ways to provide $\sSet$-enrichments of $\msSet$:
\begin{enumerate}[label=(\arabic*)]
	\item The underlying simplicial set $\Map^\flat(X,Y)$ of $\Map(X,Y)$ provides a
		simplicial enrichment adjoint to the tensor structure
		\[
			(K,\overline{X})\mapsto K^\flat\times \overline{X}. 
		\]
		of $\msSet$ over $\sSet$.
	\item The simplicial subset $\Map^{\sharp}(X,Y) \subset \Map^\flat(X,Y)$ consisting
		of those simplices whose edges are marked in $\Map(X,Y)$ provides a
		simplicial enrichment adjoint to the tensor structure
		\[
			(K,\overline{X})\mapsto K^\sharp \times \overline{X}
		\]
		of $\msSet$ over $\sSet$.
\end{enumerate}
Given a $2$-category $\CC$, we may consider $\CC$ as a $\msSet$-enriched category by virtue of
taking nerves of the mapping $1$-categories of $\CC$ and marking those edges corresponding to
$2$-isomorphisms. In particular, we will speak of $\msSet$-enriched functors between a $2$-category
and a $\msSet$-enriched category.

\subsection{Scaled simplicial sets}

Scaled simplicial sets provide a model for a homotopy coherent theory of
$(\infty,2)$-categories, called the theory of $\infty$-bicategories, paralleling the way
ordinary simplicial sets model a homotopy coherent theory of $(\infty,1)$-categories. The
idea goes back to work of Street \cite{street} where the notion of a nerve of a strict
$n$-category is defined as a simplicial set equipped with additional data that keeps
track of the invertibility of higher morphisms. Verity's {\em complicial sets} \cite{verityI,verityII}
are then obtained by weakening the horn filling properties of the Street nerve, leading to a
candidate model for $(\infty,n)$-categories. Along the same lines, Lurie
\cite{LurieGoodwillie} introduces scaled simplicial sets and proves that they indeed provide
a model for $(\infty,2)$-categories equivalent to the other known models.

A {\em scaled simplicial set} consists of a simplicial set $K$ together with a chosen subset $T
\subset K_2$ of $2$-simplices, containing all degenerate $2$-simplices. The elements of $T$ are
called {\em thin}. Given a scaled simplicial set $K$, we denote by $K^{\th} \subset K$ the simplicial subset of $K$
consisting of those simplices all of whose $2$-simplices are thin. For a simplicial set $K$, we
denote by $K_{\flat}$ the scaled simplicial set whose only thin simplices are the degenerate
$2$-simplices and by $K_{\sharp}$ the scaled simplicial set with all $2$-simplices thin.  

Given a scaled simplicial set $(K,T)$, a marked simplicial set $(X,M)$ and a map
$p:X\to K$, we say that $p$ is a \emph{scaled Cartesian fibration} if 
\begin{enumerate}[label=(\arabic*)]
	\item $p$ is a locally Cartesian fibration in the sense of \cite[2.4.2.6]{LurieHTT},
	\item $M$ is the set of locally $p$-Cartesian edges of $X$, and 
	\item The restriction of $p$ to every thin $2$-simplex $\Delta^2 \to K^{\th}$ is a Cartesian fibration. 
\end{enumerate}

\subsection{Model structures}
\label{subsec:model}

We recall certain model structures introduced in \cite{LurieHTT} and \cite{LurieGoodwillie}
which will be relevant for us, as well as some facts about widely-known model structures
which will be of use in our coming arguments. Of particular importance are the model
structures on the categories $\msSet$ and $(\msSet)_{/K}$, which are integral to the Quillen
adjunction at the heart of this paper. 

The basic building block for our construction will be the model structure on
$(\msSet)_{/K}$ which models Cartesian fibrations, cf. \cite[Prop. 3.1.3.7]{LurieHTT}: 

\begin{theorem}
	Let $K$ be a simplicial set. There exists a left-proper combinatorial simplicial model structure on $(\msSet)_{/K}$ with 
	\begin{enumerate}
		\item cofibrations those morphisms whose underlying maps of simplicial sets are monomorphisms,
		\item equivalences the Cartesian equivalences of \cite[Prop.
			3.1.3.3]{LurieHTT},
		\item fibrant objects $p:\overline{X}\to K$, where $p$ is a Cartesian fibration 
			with $p$-Cartesian edges marked,
		\item for objects $X$ and $Y$, the simplicial enrichment is given by the
			simplicial set $\Map^\sharp(X,Y)$ from \S \ref{subsec:marked}. 
	\end{enumerate}
\end{theorem}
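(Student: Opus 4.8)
The statement is recalled verbatim from \cite{LurieHTT}, so the plan is to reconstruct Lurie's proof, whose natural engine is the recognition theorem for combinatorial model categories (Jeff Smith's theorem, \cite[\S A.2.6]{LurieHTT}). The ambient category $(\msSet)_{/K}$ is presentable; the prospective cofibrations are the monomorphisms, which form the weakly saturated class generated by a \emph{set} $I$; and the prospective weak equivalences are the Cartesian equivalences of \cite[Prop.\ 3.1.3.3]{LurieHTT}, characterized by the requirement that $\Map^\sharp(\overline{Y},Z)\to\Map^\sharp(\overline{X},Z)$ be a homotopy equivalence of Kan complexes for every fibrant $Z$. Smith's theorem then demands three things: that the Cartesian equivalences be \emph{perfect} (accessible, and closed under $2$-out-of-$3$, retracts and filtered colimits); that every map with the right lifting property against $I$ be a Cartesian equivalence; and that the Cartesian equivalences be stable under pushout along the members of $I$.

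First I would develop the combinatorial core, namely the class of \emph{marked anodyne} morphisms, defined as the weakly saturated class generated by an explicit set: inner-horn inclusions carrying degenerate marking, a family of outer-horn inclusions in which the terminal edge is marked, and a short list of maps comparing markings. The central lemma is that an object $Z\to K$ has the right lifting property against all marked anodyne maps if and only if $Z\to K$ is a Cartesian fibration whose marked edges are exactly the $p$-Cartesian ones; this is the horn-filling analysis of \cite[\S 3.1]{LurieHTT}, and it simultaneously identifies the fibrant objects, yielding item (3), and shows that marked anodyne maps are trivial cofibrations. This is the genuinely delicate step, since it rests on explicit simplicial computations rather than formal nonsense.

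Granting this, the first two hypotheses of Smith's theorem are comparatively routine. Perfectness follows by building a functorial fibrant replacement from the small object argument, after which the defining mapping-space condition may be tested against a single set of fibrant objects (giving accessibility), while closure under $2$-out-of-$3$ and retracts is immediate and compatibility with filtered colimits follows from the finiteness of the domains of $I$. For the second hypothesis, a map with the right lifting property against all monomorphisms is in particular a trivial Kan fibration on underlying simplicial sets which is bijective on markings, and such maps are directly seen to be Cartesian equivalences.

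I expect the principal obstacle to be the third hypothesis, which I would obtain \emph{together} with left-properness and the simplicial structure of item (4) from one pushout-product (SM7) statement: for a monomorphism $i$ in $(\msSet)_{/K}$ and a monomorphism $j$ of simplicial sets, the pushout-product of $i$ with $j^\sharp$ — formed via the tensoring $(L,\overline{X})\mapsto L^\sharp\times\overline{X}$ adjoint to $\Map^\sharp$ — is again a cofibration, and is marked anodyne whenever $i$ is marked anodyne or $j$ is anodyne. This is the compatibility of marked anodyne maps with smash products in \cite[\S 3.1]{LurieHTT}. By adjunction it shows that $\Map^\sharp(-,Z)$ carries monomorphisms to Kan fibrations for fibrant $Z$; since that functor turns pushouts into pullbacks, a pushout of a Cartesian equivalence along a cofibration becomes, in a homotopy pullback square, the base change of a homotopy equivalence along a Kan fibration, hence remains a Cartesian equivalence. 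This yields both the stability hypothesis of Smith's theorem and left-properness, and the same SM7 axiom upgrades the $\sSet$-enrichment by $\Map^\sharp$ to a simplicial model structure. In short, once the marked-anodyne calculus of the two preceding paragraphs is in place, the model structure assembles formally.
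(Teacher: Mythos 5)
Your proposal is correct: the paper gives no proof of this statement, merely recalling it from \cite[Prop.\ 3.1.3.7]{LurieHTT}, and your reconstruction faithfully follows Lurie's actual argument there (Smith's recognition theorem, the marked anodyne calculus of \cite[\S 3.1]{LurieHTT}, and the pushout--product axiom yielding stability, left-properness, and the $\Map^\sharp$-enrichment). The only simplification worth noting is that left-properness is also immediate from the fact that every object of $(\msSet)_{/K}$ is cofibrant.
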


We will refer to this model structure as the \emph{Cartesian model structure}

\begin{remark}
	A few facts about the Cartesian model structure will be of use in the sequel:
	\begin{enumerate}
		\item There is a dual model structure on $(\msSet)_{/K^{\op}}$, the \emph{coCartesian model structure}. 
		\item As a special case of the Cartesian model structure, we obtain a model
			structure on $\msSet\cong (\msSet)_{/\ast}$, whose fibrant objects
			are naturally marked $\infty$-categories. We will also refer to this
			model structure as the \emph{marked model structure}.
	\end{enumerate}
\end{remark}

For the right-hand side of the Grothendieck construction, we will need a model structure
which models $(\infty,2)$-functors into the $(\infty,2)$-category $\FCat_{\infty}$. This is
provided by the \emph{projective model structure} on the category $\Fun_{\msSet}(\CC,
\msSet)$ of $\msSet$-enriched functors with $\msSet$-enriched natural transformations: 

\begin{theorem}
	For every 2-category $\CC$, the category $\Fun_{\msSet}(\CC, \msSet)$ carries a
	combinatorial left-proper model structure in which a natural transformation
	$\alpha:F\to G$ is 
	\begin{enumerate}
		\item a fibration if the induced map $\alpha_C:F(C)\to G(C)$ is a fibration
			in the marked model structure,
		\item a weak equivalence if the induced map $\alpha_C:F(C)\to G(C)$ is a
			weak equivalence in the marked model structure. 
	\end{enumerate} 
\end{theorem}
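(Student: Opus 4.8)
The plan is to establish the projective model structure on $\Fun_{\msSet}(\CC, \msSet)$ by transferring it from the marked model structure on $\msSet$ along the family of evaluation functors, using the standard recognition theorem for combinatorial model structures built by left Bott--Thurston (i.e., ``projective'') transfer. Concretely, I would exhibit the structure as the left-transferred model structure along the right adjoint $\prod_{C \in \CC} \ev_C \colon \Fun_{\msSet}(\CC,\msSet) \to \prod_{C} \msSet$, whose target carries the product model structure. First I would record that the marked model structure on $\msSet$ from \S\ref{subsec:model} is combinatorial and left-proper (both facts noted in the excerpt), and that a product of combinatorial left-proper model categories is again combinatorial and left-proper, with fibrations, weak equivalences, and cofibrations all determined componentwise. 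The candidate fibrations and weak equivalences in the theorem are precisely those natural transformations sent to (componentwise) fibrations and weak equivalences by this evaluation functor, so the issue is purely to verify that this class underlies a genuine model structure and that it is left-proper.

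The key structural input is that $\msSet$-enriched functor categories out of a $\msSet$-enriched (hence, via nerves, $\msSet$-enriched) source admit cotensors and tensors computed pointwise, so the right adjoint $\ev_C$ has a left adjoint given by the enriched left Kan extension along the inclusion of the one-object $\msSet$-category on $C$; explicitly, the left adjoint sends a marked simplicial set $X$ to the functor $D \mapsto \CC(C,D) \otimes X$ (again using nerves and markings to read $\CC(C,D)$ as an object of $\msSet$). I would then invoke the transfer theorem in its combinatorial form (as in Lurie's appendix \cite{LurieHTT} or Hirschhorn): since the source category $\Fun_{\msSet}(\CC,\msSet)$ is locally presentable, it suffices to produce generating sets $I$ and $J$ of cofibrations and trivial cofibrations for the putative transferred structure---obtained by applying the left adjoints $\ev_C^{!}$ to the generators of the marked model structure on $\msSet$---and to check the two acyclicity hypotheses: that $J$-cell complexes are weak equivalences, and that a map with the right lifting property against $I$ is a trivial fibration.

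The main obstacle will be verifying the acyclicity condition, i.e., that relative $J$-cell complexes are pointwise weak equivalences. The subtlety is that pushing out a generating trivial cofibration of the transferred structure corresponds, after evaluation at an object $D$, to tensoring a marked trivial cofibration of $\msSet$ with the object $\CC(C,D) \in \msSet$; I must confirm that tensoring a trivial cofibration in the marked model structure with an arbitrary marked simplicial set (namely the nerve of a mapping category) again yields a weak equivalence, and that pointwise weak equivalences are closed under the transfinite compositions and pushouts arising in cell attachment. This reduces to the monoidal compatibility of the marked model structure---that $K^\flat \times (-)$ (equivalently the tensoring used to build the left adjoint) is a left Quillen functor, which follows from the fact that $\msSet$ with its Cartesian-type enrichment is a simplicial (indeed $\msSet$-enriched) model category---together with left-properness to control the behavior along cofibrant cell attachments. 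Once acyclicity is in hand, left-properness of the transferred structure is immediate since cofibrations and weak equivalences are detected pointwise and $\msSet$ is left-proper, and combinatoriality follows from local presentability together with the explicit generating sets $I$ and $J$.
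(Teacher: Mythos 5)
Your argument is correct in substance, but it is worth noting that the paper does not actually prove this statement: it simply cites \cite[Prop.\ A.3.3.2]{LurieHTT}, and what you have written out is, in essence, the standard proof of that cited proposition. The transfer along the evaluation functors, with left adjoints $X \mapsto (D \mapsto \CC(C,D)\times X)$, is exactly how the projective structure is built, and your identification of acyclicity as the crux is right. Two points deserve sharpening. First, the mapping objects $\CC(C,D)$ carry a nontrivial marking (the $2$-isomorphisms), so the fact you need is not merely that $K^\flat\times(-)$ is left Quillen but that $\overline{K}\times(-)$ is left Quillen for an arbitrary marked simplicial set $\overline{K}$; this follows from the pushout-product axiom for the marked (Cartesian) model structure together with the fact that every object is cofibrant --- precisely the ``excellence'' hypotheses under which Lurie's A.3.3.2 operates, and they should be invoked explicitly rather than left implicit. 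Second, left-properness of $\msSet$ is not what controls the cell attachments in the acyclicity step: once each generating map $\CC(C,-)\times j$ is a pointwise trivial cofibration, its pushouts and transfinite composites are pointwise trivial cofibrations in any model category, with no properness needed; left-properness is only relevant for the final assertion that the transferred structure is itself left-proper (where it does follow pointwise, as you say). Finally, the transfer theorem you want is usually attributed to Kan (or Crans), not Bott--Thurston.
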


\begin{proof}
	See, e.g., \cite[Prop. A.3.3.2]{LurieHTT}. 
\end{proof}

\begin{remark}
	The projective model structure on $\Fun_{\msSet}(\CC, \msSet)$, equipped with mapping spaces
	$\Map^\sharp(-,-)$, is simplicial \cite[Remark 3.8.2]{LurieGoodwillie}.
\end{remark}

Finally, we need a model structure for the left-hand side of the Grothendieck construction,
modelling scaled Cartesian fibrations over $\Nsc(\CC)$. 

\begin{theorem}
	Let $(K,T)$ be a scaled simplicial set. There exists a left-proper, combinatorial,
	simplicial model structure on $(\msSet)_{/K}$ such that
	\begin{enumerate}
		\item A morphism is a cofibration if and only if it induces a monomorphism
			between underlying simplicial sets.
		\item A object $p:\overline{X}\to K$ is fibrant if and only if $p$ is a
			scaled Cartesian fibration, and the marked edges are precisely the
			locally $p$-Cartesian edges.  
	\end{enumerate} 
\end{theorem}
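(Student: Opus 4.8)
The plan is to realize the desired model structure as a left Bousfield localization of a more permissive combinatorial model structure on $(\msSet)_{/K}$, chosen so that the localizing maps encode precisely the requirement that thin triangles support honest Cartesian fibrations. Since the cofibrations are to be the underlying monomorphisms in every case, every object is cofibrant and the structure will be left-proper for free; the content therefore lies entirely in the weak equivalences and in the identification of fibrant objects. The organizing observation is that as the scaling $T$ grows the class of scaled Cartesian fibrations shrinks---more triangles are required to carry Cartesian (rather than merely locally Cartesian) fibrations---so the structure attached to $(K,T)$ ought to be a further localization of the one attached to the minimal scaling $T_0$ whose only thin $2$-simplices are the degenerate ones, for which (by conditions $(1)$--$(2)$ of the definition) scaled Cartesian fibrations are exactly the locally Cartesian fibrations with their locally $p$-Cartesian edges marked.

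First I would establish the base case $T = T_0$ directly, without appeal to the theorem being proved. Here I fix generating cofibrations given by the boundary inclusions $\partial\Delta^n \hra \Delta^n$ in both markings together with the marking-change map $(\Delta^1)^\flat \to (\Delta^1)^\sharp$, all pushed forward over $K$, and I take a set of generating trivial cofibrations adapted to the locally Cartesian setting, assembled from inner horn inclusions and the edge-marking maps in the style of \cite[\S 3.1]{LurieHTT} that force a locally $p$-Cartesian edge to exist and be marked. One checks that an object of $(\msSet)_{/K}$ has the right lifting property against this set if and only if it is a locally Cartesian fibration whose marked edges are exactly the locally $p$-Cartesian ones. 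Invoking the recognition theorem for combinatorial model categories (a form of Smith's theorem, \cite[\S A.2]{LurieHTT}) then produces a left-proper, combinatorial, simplicial model structure with the prescribed cofibrations and these fibrant objects, simpliciality being verified against the enrichment $\Map^\sharp(-,-)$ by the usual pushout-product computation.

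Next I would localize. For each thin $2$-simplex $\sigma \in T$ I introduce a map $j_\sigma$ over $K$, built by transporting along $\sigma$ an appropriate marked horn inclusion over $\Delta^2$, whose locality relative to a base-fibrant object $p$ expresses exactly that the pullback of $p$ along $\sigma$ is a Cartesian fibration. Writing $S_T$ for the resulting set, the existence theorem for left Bousfield localizations of left-proper combinatorial model categories \cite{LurieHTT} yields a localization $L_{S_T}$ of the base structure which is again left-proper, combinatorial, and simplicial, with unchanged cofibrations, and whose fibrant objects are the $S_T$-local objects already fibrant in the base structure. By construction these are precisely the scaled Cartesian fibrations over $(K,T)$ with locally $p$-Cartesian edges marked; taking $T = T_0$ recovers the base structure and $T = $ all triangles recovers the Cartesian model structure of \S\ref{subsec:model}, so the family interpolates as expected.

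The main obstacle is the fibrant-object identification at both stages. For the base case one must run the marked anodyne lifting arguments of \cite[\S 3.1]{LurieHTT} to show that right lifting against the generating trivial cofibrations is equivalent to being a locally Cartesian fibration with the correct marking---in particular that the marking is rigidly forced to be the class of locally $p$-Cartesian edges, rather than merely containing it. For the localization one must verify that $S_T$-locality coincides with condition $(3)$ in the definition of a scaled Cartesian fibration, i.e. that the maps $j_\sigma$ capture Cartesian-ness over $\sigma$ while leaving the marking undisturbed. Both reduce to explicit horn-filling analyses over $\Delta^2$ and in the $K$-fibers; once these are settled, left-properness, combinatoriality, and simpliciality follow formally from the corresponding properties of the base structure and the general behavior of left Bousfield localization.
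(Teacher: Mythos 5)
The paper does not prove this statement at all: it simply observes that it is the dual of a special case of \cite[Thm 3.2.6]{LurieGoodwillie} and cites that result. Your proposal is therefore a genuinely different (and far more ambitious) route --- an attempt to reconstruct Lurie's theorem from scratch as a left Bousfield localization of a ``minimal scaling'' base structure. The overall architecture is reasonable and in fact shadows how Lurie's proof is organized: his generating scaled-marked-anodyne maps include horn inclusions whose applicability is conditioned on thinness of certain triangles, so enlarging $T$ does amount to adjoining further anodyne maps, i.e.\ to localizing. The localization step also correctly records that everything is cofibrant (so left-properness and the preservation of cofibrations are free) and that fibrant objects become the local base-fibrant ones.

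However, as written the argument has real gaps, concentrated exactly where the content lies. First, the base case is not established: you invoke Smith's recognition theorem, but that theorem takes as input an accessible class of \emph{weak equivalences} satisfying two-out-of-three and compatibility with the generating cofibrations, and you never define the weak equivalences --- yet they are the main output of the theorem for the rest of the paper (they enter Corollary \ref{cor:chiisQE} and everything built on it). Second, the claim that generating trivial cofibrations ``in the style of \cite[\S 3.1]{LurieHTT}'' characterize \emph{locally} Cartesian fibrations with exactly the locally $p$-Cartesian edges marked is not correct as stated: the marked anodyne maps of HTT \S 3.1, in particular the right horn inclusions $(\Lambda^n_n)\hra\Delta^n$ with last edge marked for all $n\geq 2$, force marked edges to be globally Cartesian. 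The locally Cartesian analogue requires restricting these horns according to degeneracy/thinness data over the base --- which is to say the scaling already intervenes in the ``base'' structure, and pinning down that the marking is forced to equal (not merely contain) the locally Cartesian edges is the technical heart of \cite[Thm 3.2.6]{LurieGoodwillie}, not a routine transcription. Third, in the localization step you identify $S_T$-locality with the strict condition (3) on thin triangles; locality of a fibrant object against a cofibration $j_\sigma$ is the assertion that $\Map^\sharp(j_\sigma,-)$ is a trivial Kan fibration, hence is equivalent to the right lifting property against all pushout-products of $j_\sigma$ with $(\partial\Delta^n)^\sharp\hra(\Delta^n)^\sharp$, not against $j_\sigma$ alone; this extra verification must be carried out for your chosen $j_\sigma$. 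None of these gaps is obviously fatal, but filling them amounts to reproving \cite[Thm 3.2.6]{LurieGoodwillie}, which is precisely what the paper's one-line citation avoids.
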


\begin{proof}
	This is (the dual of) a special case of \cite[Thm 3.2.6]{LurieGoodwillie}. 
\end{proof}

\subsection{Scaled straightening and unstraightening}

Given a scaled simplicial set $\overline{K}=(K,T)$ and a weak equivalence
$\phi:\mathfrak{C}^{\on{sc}}[(K,T)]\to \C$ of marked simplicially enriched categories, the
$(\infty,2)$-Grothendieck construction is presented by a Quillen equivalence 
\[
\CStsc_{\phi}:(\msSet)_{/K} \llra \Fun(\C,\msSet): \CUnsc_{\phi}
\]
constructed in \cite[Theorem 3.8.1]{LurieGoodwillie}. The left and right adjoints are called, respectively,
scaled straightening and scaled unstraightening. As proved in \cite{LurieGoodwillie}, this Quillen
equivalence relates the scaled \emph{co}Cartesian model structure to the projective model
structure.\footnote{The functors we denote by $\CStsc$ and $\CUnsc$ appear in \cite{LurieGoodwillie}
as $\Stsc$ and $\Unsc$. We will, however, use the latter symbols for the corresponding functors in
the \emph{contravariant} Grothendieck construction.} In this paper, we consider the special case
where $\CC$ is a 2-category and $\phi:\mathfrak{C}^{\on{sc}}[\Nsc(\CC)]\to \CC$ is the counit of the
adjunction $\mathfrak{C}^{\on{sc}}\vdash \Nsc$ (cf. \cite[Theorem 4.2.7]{LurieGoodwillie}). In this context, we will denote the straightening
and unstraightening by $\CStsc_{\CC}$ and $\CUnsc_{\CC}$, respectively.

Since our relative nerve construction produces a scaled \emph{Cartesian} fibration, it will relate
to the duals of Lurie's scaled straightening and unstraightening functors. More precisely, it will
related to the Quillen adjunction given by the \emph{contravariant scaled straightening} $\St_{\CC}$
and the \emph{contravariant scaled unstraightening} $\Un_{\CC}$, given respectively by the
composites 
\begin{equation}\label{eq:CSTcomposite}
\begin{tikzcd}
(\msSet)_{/\Nsc(\CC)}\arrow[r,"\op"] & (\msSet)_{/\Nsc(\CC)^{\op}}\arrow[r, "\operatorname{CSt}^{\operatorname{sc}}"] & \mathsf{Fun}(\CC^{(\op,-)}, \msSet)\arrow[r,"\op\circ-"] & \mathsf{Fun}(\CC^{(\op,\op)}, \msSet)
\end{tikzcd}
\end{equation}
and 
\begin{equation}
\begin{tikzcd}
\mathsf{Fun}(\CC^{(\op,\op)}, \msSet)\arrow[r,"\op\circ-"]  & \mathsf{Fun}(\CC^{(\op,-)}, \msSet)\arrow[r,"\operatorname{CUn}^{\operatorname{sc}}"] & (\msSet)_{/\Nsc(\CC)^{\op}}\arrow[r,"\op"] & 	(\msSet)_{/\Nsc(\CC)}.
\end{tikzcd}
\end{equation}
The aim of this section is to provide an explicit description of the former functor. 

Before we give this description, however, it is worth pausing to comment the appearance of $\op$'s in the above composites. Given marked simplicial sets $\overline{X}$ and $\overline{Y}$, there is a bijection 
\[
	\Hom_{\msSet}(\overline{X},\overline{Y})\cong \Hom_{\msSet}(\overline{X}^{\op},\overline{Y}^{\op})
\]
However, this does \emph{not} extend to a simplicially-enriched functor 
\[
\op:\msSet \to \msSet; \quad 
X \mapsto X^{\op}.
\] 
This is because $\Map^\flat(X,Y)\not\cong\Map^\flat(X^\op,Y^\op)$, but instead
\[
\Map^\flat(X,Y)\cong\Map^\flat(X^\op,Y^\op)^\op.
\] 
Consequently, we need to apply $\op$ to all of the Hom-spaces of $\msSet$ to get an enriched functor 
\[
\op:\msSet \to (\msSet)^{(-,\op)};\quad 
X \mapsto X^\op.
\] 
This accounts for the removal of the second $\op$ from $\Nsc(\CC)$ in the composite above.

We now proceed with the definition of $\St_{\CC}$ obtained by the following transcription of the constructions in
\cite[\S 3.5]{LurieGoodwillie}.
\begin{construction}
	For $X$ a simplicial set, we define the \emph{right cone over $X$} to be 
	\[
	C^R(X):=X\times \Delta^1\coprod_{X\times\{1\}}\Delta^0.
	\]
	Similarly we set
	\[
	\mathfrak{C}^R(X):=\mathfrak{C}[C^R(X)].
	\]
	For $X\to K$ a morphism of simplicial sets, we define relative variants of both of these cones via, e.g., 
	\[
	\mathfrak{C}^R_K(X):= \mathfrak{C}^R(X)\coprod_{\mathfrak{C}[X\times \{0\}]}\mathfrak{C}[K].
	\]
	
	Now suppose that $\overline{X}=(X,M)$ is a marked simplicial set and $\overline{K}=(K,T)$ is a scaled simplicial set, with $X\to K$ a morphism. We can then define a scaling $T_R$ on $X\times \Delta^1$ by requiring that
	\begin{itemize}
		\item All degenerate 2-simplices are in $T_R$. 
		\item For $\sigma:\Delta^2\to X\times \Delta^1$, if the preimage of $1$ under the composite 
		\[
		\begin{tikzcd}
		\Delta^2\arrow[r, "\sigma"] & X\times \Delta^1\arrow[r] & \Delta^1
		\end{tikzcd}
		\] 
		is $\{1,2\}$ and $\sigma|_{\Delta^{\{1,2\}}}\in M$ then $\sigma\in T_R$.  
	\end{itemize}
	We then define a scaled variant of the cone, 
	\[
	\overline{C}^R(\overline{X}):=(X\times \Delta^1,T_R)\coprod_{(X\times\{1\})_\flat}(\Delta^0)_\flat.
	\]
	This, together with the scaling on $K$ yields a scaled relative cone $\overline{C}^R_{\overline{K}}(\overline{X})$. We then define marked simplicial categories  $\overline{\mathfrak{C}}^R(\overline{X})$ and $\overline{\mathfrak{C}}^R_{\overline{K}}(\overline{X})$ by applying the scaled rigidification $\mathfrak{C}^{\operatorname{sc}}$ to the respective scaled cones. 
\end{construction}

\begin{definition}\label{defn:CSt}
	Let $\CC$ be a 2-category and $\overline{X}\to \Nsc(\CC)$ a marked simplicial set over $\Nsc(\CC)$. We define 
	\[
	\CC(X):= \CC\coprod_{\mathfrak{C}^{\on{sc}}[\Nsc(\CC)]}\overline{\mathfrak{C}}^R_{\Nsc(\CC)}(\overline{X}).
	\]
	The \emph{contravariant scaled straightening functor}\glsadd{Stsc} $\St_{\CC}$
	is the functor which sends $\overline{X}\to \Nsc(\CC)$ to the functor 
	\[
		\CC^{(\op,\op)} \lra \msSet, \quad x\mapsto \Map_{\CC(X)}(x,v)^{\op}.
	\]
\end{definition}

It is straightforward to check that $\St_{\CC}$ as defined in \eqref{eq:CSTcomposite} is indeed naturally isomorphic to $\St_{\CC}$ as defined in Definition \ref{defn:CSt}.

\section{The relative $2$-nerve}\label{sec:const}

In this section, we define the central notion of this work: the relative $2$-nerve. We begin with the definition of
the scaled nerve. 

\begin{definition}\label{defn:OI}
	Let $I$ be a linearly ordered finite set. We define a $2$-category $\OO^{I}$\glsadd{OI} as
	follows\footnote{This notation is a deliberate nod to the \emph{orientals} defined by Street
	in \cite{street}. In the terminology of this work, the $2$-category $\mathbb{O}^{[n]}$ is
	the free $2$-category on the $n$-simplex.}:
	\begin{itemize}
		\item the objects of $\OO^I$ are the elements of $I$,
		\item the category $\mathbb{O}^{I}(i,j)$ of morphisms between objects $i,j \in I$ is defined
		as the poset of finite sets $S \subseteq I$ such that $\min(S)=i$ and $\max(S)=j$
		ordered by inclusion,
		\item the composition functors are given, for $i,j,l\in I$, by
		\[
		\mathbb{O}^{I}(i,j) \times \mathbb{O}^{I}(j,l) \to \mathbb{O}^{I}(i,l), \; (S,T) \mapsto S \cup T.
		\]
	\end{itemize}
\end{definition}

\begin{remark} The association $[n] \mapsto \OO^{[n]}$ extends to a cosimplicial $2$-category so
	that, for a $2$-category $\CC$, we obtain a simplicial set $\Nsc(\CC)$ with set of
	$n$-simplices
	\[
	\Nsc(\CC)_n = \Fun_{\Cat}(\OO^{[n]}, \CC).
	\]
	The simplicial set $\Nsc(\CC)$ is precisely the scaled nerve of \cite{LurieGoodwillie} where the
	scaled $2$-simplices are those for which the $2$-morphism $\{0,2\} \Rightarrow \{0,1,2\}$
	in $\OO^{[2]}$ is mapped to an invertible $2$-morphism in $\CC$.
\end{remark}

For a linearly ordered set $I$, we consider the under $2$-category
\begin{equation}\label{defn:GI}
\tilde{D}^I = \left((\mathbb{O}^{I})^{(\mathblank,\text{op})}\right)_{ \min (I) \big/}
\end{equation}
and denote its homotopy category by $D^I$.\glsadd{DI} For the standard ordinal $[n]=\{0,1,\ldots,n\}$, we will
use the notation $D^n = D^{[n]}$. 

\begin{proposition}\label{prop:GIposet}
	The category $D^{I}$ can be identified with the poset whose
	\begin{itemize}
		\item elements are those subsets $S \subseteq I$ such that $\min(S)=\min(I)$, and
		\item we have $S \leq T$ if and only if the following conditions hold:
		\begin{enumerate}[label=(\arabic*)]
			\item $\max(S) \leq \max(T)$,
			\item $T \subset S \cup [\max(S),\max(T)]$.
		\end{enumerate}
	\end{itemize}
\end{proposition}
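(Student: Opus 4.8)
The plan is to unwind the two-step definition $\tilde D^I = \bigl((\OO^I)^{(\mathblank,\op)}\bigr)_{\min(I)/}$ explicitly and then pass to the homotopy category. Write $\DD := (\OO^I)^{(\mathblank,\op)}$ and $0 := \min(I)$. Reversing the direction of the $2$-morphisms changes neither objects nor $1$-morphisms, so an object of the coslice $\DD_{0/}$ is a $1$-morphism $0\to d$ in $\OO^I$, that is, a subset $S\subseteq I$ with $\min S = 0$ and $\max S = d$. As $d$ ranges over $I$, this identifies the objects of $\tilde D^I$ with the subsets $S\subseteq I$ satisfying $\min S = \min I$, which is the asserted description of the elements of $D^I$.

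Next I compute the mapping categories. By the definition of the coslice (applied in $\DD$), a $1$-morphism in $\tilde D^I$ from $S$ (regarded as $0\to\max S$) to $T$ (regarded as $0\to\max T$) is a pair $(U,\alpha)$ in which $U$ is a $1$-morphism $\max S\to\max T$ of $\OO^I$, i.e.\ a set with $\min U=\max S$ and $\max U=\max T$, and $\alpha$ is the comparison $2$-cell. Since composition in $\OO^I$ is union, the relevant $2$-cell is $\alpha\colon U\cup S \Rightarrow T$ in $\DD$. Because each hom-poset $\OO^I(i,j)$ is ordered by inclusion and this order is reversed in $\DD$, such an $\alpha$ exists, and is then unique, precisely when $T\subseteq U\cup S$; uniqueness makes the compatibility condition defining the coslice $2$-morphisms automatic. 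A $2$-morphism $(U,\alpha)\to(U',\alpha')$ is a $2$-morphism $U\Rightarrow U'$ in $\DD$, i.e.\ a reverse inclusion $U'\subseteq U$. Hence $\tilde D^I(S,T)$ is the poset, ordered by reverse inclusion, of sets $U\subseteq[\max S,\max T]$ with $\min U=\max S$, $\max U=\max T$, and $T\subseteq U\cup S$.

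I then show this mapping poset is empty or contractible. It is nonempty exactly when an admissible $U$ exists; since every admissible $U$ lies in $[\max S,\max T]$, existence is equivalent to $\max S\le\max T$ together with $T\subseteq S\cup[\max S,\max T]$, i.e.\ precisely conditions (1) and (2). In that case the full interval $U_{\max}=[\max S,\max T]\cap I$ is itself admissible and contains every other admissible $U$, so $U_{\max}$ is an initial object of the mapping poset and its nerve is contractible. Passing to $\pi_0$ of the geometric realization, the hom-set $D^I(S,T)=\pi_0|\tilde D^I(S,T)|$ is a single point when (1) and (2) hold and is empty otherwise. Thus $D^I$ is a preorder with the stated relation; antisymmetry follows since $S\le T\le S$ forces $\max S=\max T=:m$, whereupon (2) degenerates to $T\subseteq S\cup\{m\}=S$ and symmetrically, giving $S=T$.

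The main obstacle is the bookkeeping around the two opposites: fixing the direction of the comparison $2$-cell (it is $g\circ f\Rightarrow f'$ in the $2$-op $\DD$, so the reversed poset order yields the inclusion $T\subseteq U\cup S$ rather than its reverse), and then checking that ``there exists an admissible $U$'' collapses exactly to condition (2). Once the mapping categories are correctly identified, contractibility is immediate from the initial object $U_{\max}$, and no delicate estimate remains.
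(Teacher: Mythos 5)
Your argument is correct and follows essentially the same route as the paper's: identify $1$-morphisms $S\to T$ in $\tilde D^I$ with subsets $U$ satisfying $\min U=\max S$, $\max U=\max T$, $T\subseteq S\cup U$, observe that such a $U$ exists iff conditions (1) and (2) hold, and use the full interval $[\max S,\max T]$ as an initial object of the mapping poset to conclude each nonempty mapping category is contractible, hence collapses to a point in the homotopy category. Your write-up is somewhat more explicit about the bookkeeping of the $(-,\op)$ and the coslice $2$-cells, and about antisymmetry, but the substance is the same.
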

\begin{proof}
	We note that, for $S,T \in \tilde{D}^{I}$, a $1$-morphism in $\tilde{D}^{I}$ from $S$ to $T$ corresponds to a
	subset $U \subset I$ satisfying $\min(U)=\max(S)$, $\max(U)=\max(T)$, and $T \subseteq S \cup U$.
	In particular, we have
	\[
	T \subseteq S \cup [\max(S),\max(T)]
	\]
	so that $S$ must contain all elements $t \in T$ with $t \le \max(S)$, implying condition (2).
	Vice versa, if a pair $(S,T)$ satisfies the conditions (1) and (2), then the set $U_{0} :=
	[\max(S),\max(T)]$ defines an $1$-morphism in $\tilde{D}^{I}$ from $S$ to $T$. Further, given any
	$1$-morphism in $\tilde{D}^{I}$ from $S$ to $T$, represented by a subset $U$, we have an inclusion $U
	\subset U_0$, which represents a $2$-morphism in $\tilde{D}^{I}$ from $U_0$ to $U$. This shows that all
	nonempty morphism categories of $\tilde{D}^{I}$ are connected (even contractible) and implies the claim.
\end{proof}

A systematic analysis of the poset $D^n$ will be given in \S \ref{subsec:innerhorns}. Graphical
representations for $n \le 4$ are provided in Figure \ref{fig:posets}.
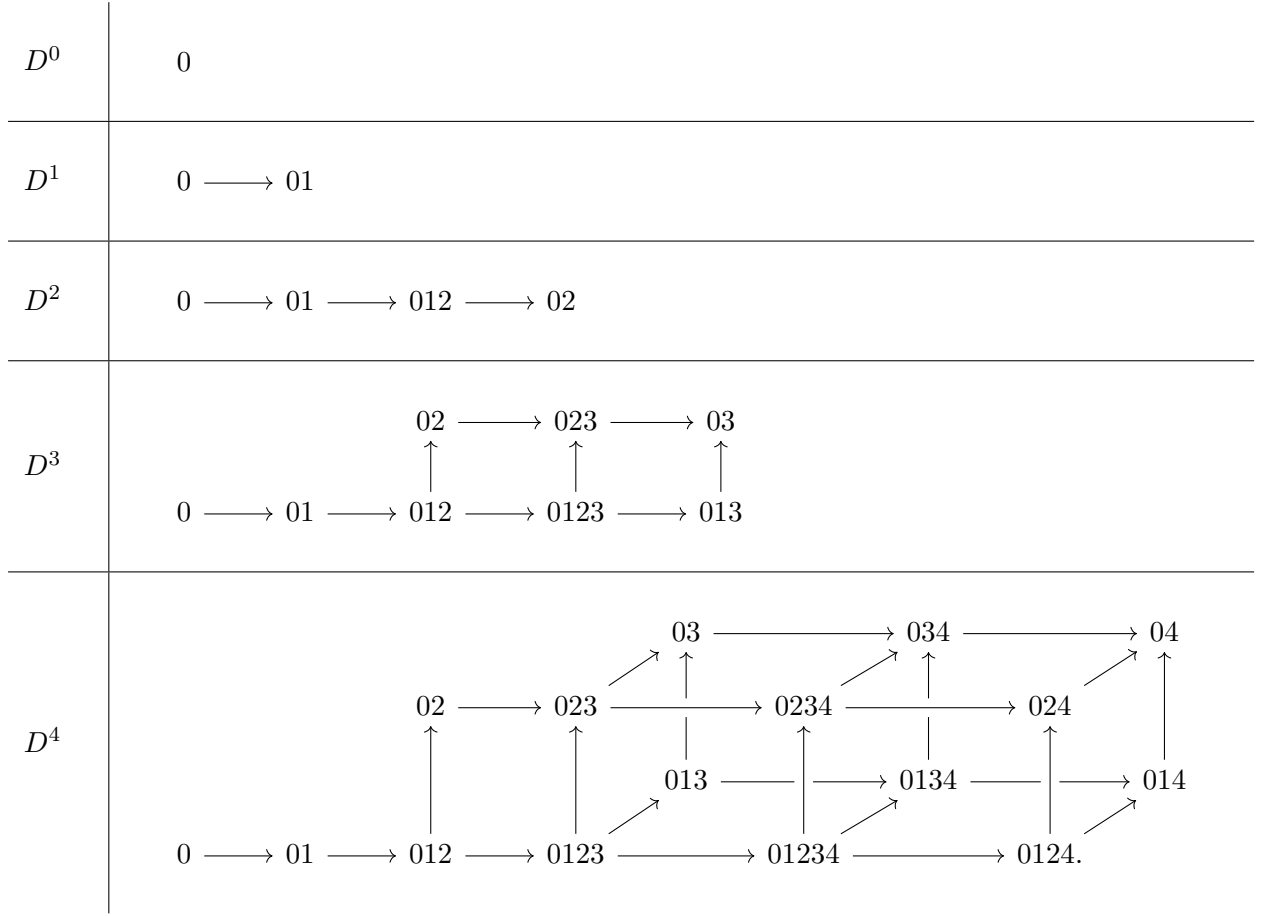
\begin{figure}[h]
	\centering
	\begin{tabular}{l l|l}
		$D^0$ & & 
		\begin{tikzpicture}[baseline={([yshift=-.7ex]current bounding box.center)},mybox/.style={ inner sep=15pt}]
\node[mybox] (box){%
		\begin{tikzcd}0 \end{tikzcd}
	};
	\end{tikzpicture}\\\hline
		$D^1$ & & 
		\begin{tikzpicture}[baseline={([yshift=-.7ex]current bounding box.center)},mybox/.style={ inner sep=15pt}]
\node[mybox] (box){%
		\begin{tikzcd}0 \ar{r} & 01 \end{tikzcd}
	};
	\end{tikzpicture}\\\hline
		$D^2$ & & 
		\begin{tikzpicture}[baseline={([yshift=-.7ex]current bounding box.center)},mybox/.style={ inner sep=15pt}]
\node[mybox] (box){%
		\begin{tikzcd}0 \ar{r} & 01 \ar{r} & 012 \ar{r} & 02\end{tikzcd}
	};
	\end{tikzpicture}\\\hline
		$D^3$ &  &
	\begin{tikzpicture}[baseline={([yshift=-.7ex]current bounding box.center)},mybox/.style={ inner sep=15pt}]
\node[mybox] (box){%
		\begin{tikzcd} & & 02\arrow[r] & 023\arrow[r] &03 \\
	0 \arrow[r]& 01\arrow[r] & 012\arrow[r]\arrow[u] & 0123\arrow[r]\arrow[u] & 013 \arrow[u]
	\end{tikzcd}
	};
	\end{tikzpicture}\\\hline
	$D^4$ & &
	\begin{tikzpicture}[baseline={([yshift=-.7ex]current bounding box.center)},mybox/.style={ inner sep=15pt}]
\node[mybox] (box){%
	\begin{tikzcd}[row sep=1.2em,column sep=1.2em]
	& &&  && && 03\arrow[rr]\ar[dd,leftarrow] && 034\arrow[rr]\ar[dd,leftarrow]&& 04\\
	&& && 02\arrow[rr] && 023\arrow[rr, crossing over]\arrow[ur] && 0234\arrow[rr, crossing over]\arrow[ur] && 024\arrow[ur]\\
	& && && && 013\arrow[rr] && 0134\arrow[rr] && 014\arrow[uu]\\
	0\arrow[rr] && 01\arrow[rr]&& 012\arrow[rr]\arrow[uu] && 0123\arrow[rr]\arrow[uu]\arrow[ur] &&
	01234\arrow[rr]\arrow[uu, crossing over]\arrow[ur] && 0124.\arrow[uu, crossing over]\arrow[ur]
	\end{tikzcd}
};
\end{tikzpicture}
	\end{tabular}
	\caption{The posets $D^{n}$ for $n \le 4$.}
	\label{fig:posets}
\end{figure}
Let $I$ be a finite nonempty linearly ordered set and suppose $J \subset I$ is a nonempty subset.
Then pullback along $1$-morphisms in $\OO^I$ determines a {\em pullback functor}\glsadd{rho}
\begin{equation}\label{eq:pullbackO}
\rho_{J,I}:\; \OO^{I}(\min(I),\min(J))^{\op} \times D^{J} \to D^{I},\; (S_1,S_2) \mapsto S_1 \cup S_2.
\end{equation}

\begin{definition}\label{defi:rel2nerve}	
	Let $\CC$ be a $2$-category and let 
	\[
	F: \CC^{(\op,\op)} \lra \msSet
	\]
	be a $\msSet$-enriched functor. 
	We define a marked simplicial set $\cchi_{\CC}(F)$, called the {\em relative $2$-nerve of $F$}, as follows.
	An $n$-simplex of $\cchi_{\CC}(F)$ \glsadd{chi2} consists of 
	\begin{enumerate}
		\item an $n$-simplex $\sigma: \Delta^n \to \Nsc(\CC)$,
		\item for every nonempty subset $I \subset [n]$, a map of marked simplicial sets
		\[
		\theta_I: \N(D^{I})^{\flat} \to  F(\sigma(\min(I))),
		\]
	\end{enumerate}
	such that, for every $J \subset I \subset [n]$, the diagram 
	\[
	\begin{tikzcd}
	\N(\OO^{I}(\min(I),\min(J))^{\op})^{\flat} \times \N(D^{J})^{\flat}
	\arrow{r}{\rho_{J,I}} \arrow[d,"\N(\sigma) \times \theta_J"] & 
	\N(D^{I})^{\flat} \arrow[d,"\theta_I"] \\
	\N(\CC(\sigma(\min(I)),\sigma(\min(J)))^{\op})^{\flat} \times F(\min(J))
	\arrow{r}{F(-)} & 
	F(\min(I))
	\end{tikzcd}
	\]
	commutes. The marked edges of $\cchi_{\CC}(F)$ are defined as follows: An edge $e$ of $\cchi_{\CC}(F)$ consists of a
	morphism $f: x \to y$ in $\CC$, together with vertices $A_x \in F(x)$, $A_y \in F(y)$, and
	an edge $\widetilde{e}: A_x \to F(f)(A_y)$ in $F(x)$. We declare $e$ to be marked if
	$\widetilde{e}$ is marked. Finally, we consider $\cchi_{\CC}(F)$ as a simplicial set over
	$\Nsc(\CC)$ by means of the forgetful functor.
\end{definition}

The construction $F \mapsto \cchi_{\CC}(F)$ is functorial with respect to $\msSet$-enriched natural
transformations and therefore defines a functor
\begin{equation}\label{eq:rel2nerve}
\cchi_{\CC}: \Fun_{\msSet}(\CC^{(\op,\op)},\msSet) \lra (\msSet)_{/\Nsc(\CC)}
\end{equation}
which we refer to as the \emph{relative $2$-nerve functor}. 

To conclude this section we observe that $\cchi_{\CC}$ preserves all limits and therefore, invoking
the adjoint functor theorem, we obtain an adjunction
\begin{equation}\label{eq:adjunction}
		\PPhi_{\CC}: (\msSet)_{/\Nsc(\CC)} \llra \Fun_{\msSet}(\CC^{(\op,\op)}, \msSet): \cchi_{\CC}.
\end{equation}
with left adjoint $\PPhi_{\CC}$. 

\section{Key propositions}\label{sec:fibinn}

We will show that the adjunction \eqref{eq:adjunction} is a Quillen adjunction by verifying that
$\cchi_{\CC}$ preserves trivial fibrations and that $\PPhi_{\CC}$ preserves weak equivalences. To this
end, we will establish the two following results whose proofs form the technical heart of this work.

\begin{proposition}\label{prop:trfib}
	Let $\CC$ be a $2$-category. Then $\cchi_{\CC}$ maps trivial fibrations in the projective
	model structure to trivial fibrations in the scaled Cartesian model structure.
\end{proposition}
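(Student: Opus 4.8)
The plan is to exploit the adjunction $\PPhi_{\CC} \dashv \cchi_{\CC}$ of \eqref{eq:adjunction} together with the fact that, in both relevant model structures, trivial fibrations are detected by a lifting property against cofibrations. In the scaled Cartesian model structure the cofibrations are exactly the monomorphisms, and the class of monomorphisms of marked simplicial sets over $\Nsc(\CC)$ is generated, as a weakly saturated class, by the boundary inclusions $(\partial\Delta^n)^\flat \hra (\Delta^n)^\flat$ and the marking maps $(\Delta^1)^\flat \hra (\Delta^1)^\sharp$, each equipped with an arbitrary structure map to $\Nsc(\CC)$. Thus it suffices to show that $\cchi_{\CC}(f)$ has the right lifting property against each such generator $i$. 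By the lifting adjunction, $\cchi_{\CC}(f)$ lifts against $i$ if and only if $f$ lifts against $\PPhi_{\CC}(i)$, and since $f$ is a trivial fibration in the projective model structure it has the right lifting property against every projective cofibration. Consequently the whole proposition reduces to the single assertion that $\PPhi_{\CC}$ carries the generating cofibrations above to projective cofibrations; as $\PPhi_{\CC}$ is a left adjoint it preserves the weakly saturated class generated by these maps, so this simultaneously shows that $\PPhi_{\CC}$ sends all cofibrations to projective cofibrations.

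Next I would make $\PPhi_{\CC}$ explicit on the relevant objects. Unwinding Definition \ref{defi:rel2nerve}, an $n$-simplex of $\cchi_{\CC}(F)$ lying over a fixed $\sigma \colon \Delta^n \to \Nsc(\CC)$ is precisely a family of maps $\theta_I \colon \N(D^I)^\flat \to F(\sigma(\min I))$, one for each nonempty $I \subseteq [n]$, subject to the compatibility squares built from the pullback functors $\rho_{J,I}$. By enriched co-Yoneda this data is the same as a map out of a colimit, so that, writing $h_c := \CC^{(\op,\op)}(c,-)$ for the $\msSet$-enriched functor corepresenting evaluation at $c$, one obtains a natural identification
\[
\PPhi_{\CC}(\Delta^n_\sigma) \;\cong\; \colim_{\emptyset \neq I \subseteq [n]} \, h_{\sigma(\min I)} \otimes \N(D^I)^\flat,
\]
the colimit being formed over the diagram whose transition maps encode $\rho_{J,I}$ and the $\CC$-action. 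The same description, restricted to those $I$ contained in a proper face, computes $\PPhi_{\CC}(\partial\Delta^n_\sigma)$; the only index missing from the boundary is $I = [n]$.

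I would then read off the cell structure of the map $\PPhi_{\CC}\big((\partial\Delta^n)^\flat_\sigma \hra (\Delta^n)^\flat_\sigma\big)$. Isolating the top index $I = [n]$ exhibits this map as a pushout of
\[
h_{\sigma(0)} \otimes \big( \N(A)^\flat \hra \N(D^n)^\flat \big),
\]
where $A \subseteq D^n$ is the subposet assembled from the images of the $\rho_{J,[n]}$ with $J \subsetneq [n]$, i.e. the union of the face subposets $A(J)$ recorded in the glossary. Since $\N(A) \hra \N(D^n)$ is a monomorphism of simplicial sets, the map $\N(A)^\flat \hra \N(D^n)^\flat$ lies in the weakly saturated class generated by the marked boundary inclusions, and tensoring the representable $h_{\sigma(0)}$ with it yields a projective cofibration; pushouts of projective cofibrations are again projective cofibrations, so the boundary generator lands where we want. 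The marking generator $(\Delta^1)^\flat \hra (\Delta^1)^\sharp$ over an edge $e$ is handled identically: here $\N(D^1) = \Delta^1$, and $\PPhi_{\CC}$ sends the generator to a pushout of $h_{\sigma(0)} \otimes \big((\Delta^1)^\flat \hra (\Delta^1)^\sharp\big)$, again a projective cofibration.

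The main obstacle is the combinatorial identification in the previous paragraph: one must verify that the contributions of the proper faces to the colimit defining $\PPhi_{\CC}(\Delta^n_\sigma)$ glue together exactly into the subposet $A \subseteq D^n$, with neither redundancy nor omission, so that the latching map is precisely $\N(A)^\flat \hra \N(D^n)^\flat$. This requires a careful analysis of the poset $D^n$, of the face subposets $A(J)$, and of the maps $\rho_{J,I}$ --- precisely the material developed in \S \ref{subsec:innerhorns} --- and it is here, rather than in the formal adjunction argument, that the real work resides.
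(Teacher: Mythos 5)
Your proposal is correct and is essentially the paper's argument transposed across the adjunction: where you compute $\PPhi_{\CC}$ on the generating cofibrations and exhibit $\PPhi_{\CC}\big((\partial\Delta^n)^\flat \to (\Delta^n)^\flat\big)$ as a pushout of a corepresentable tensored with $\Sn \hra \Dn$, the paper stays on the right-adjoint side and directly rewrites the lifting problem against $\cchi_{\CC}(F)\to\cchi_{\CC}(G)$ as the ``reduced lifting problem'' for $\Sn \hra \Dn$ against $F(\sigma(0))\to G(\sigma(0))$ (its Proposition \ref{prop:reducedlifting}). The combinatorial verification you flag as the main obstacle --- that the face contributions glue to exactly the subposet $\Sn=\bigcup_{\Delta^J\subset\partial\Delta^n}\A(J)$ --- is precisely the content of that proposition, which the paper likewise leaves as an unraveling of definitions.
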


\begin{proposition}\label{prop:innhorn}
	Let $\CC=\mathbb{O}^{n}$ with $n\geq 2$ and consider the inclusion morphism,
	\[
		(\Lambda^{n}_i)^{\flat} \to (\Delta^{n})^{\flat} \enspace 0<i<n,
	\]
	as a morphism in $(\msSet)_{/\Nsc(\OO^n)}$. Then the induced map,
	\[
		\PPhi_{\OO^n}((\Lambda^{n}_i)^{\flat}) \to \PPhi_{\OO^n}((\Delta^{n})^{\flat})
	\]
	is a trivial cofibration in the projective model structure.
\end{proposition}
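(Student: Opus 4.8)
The plan is to deduce the statement from Proposition~\ref{prop:trfib} together with an objectwise analysis of the functors $\mathfrak{D}^n := \PPhi_{\OO^n}((\Delta^n)^\flat)$ and $\mathfrak{L}^n_i := \PPhi_{\OO^n}((\Lambda^n_i)^\flat)$. The \emph{cofibration} half comes for free: by Proposition~\ref{prop:trfib} the right adjoint $\cchi_{\OO^n}$ preserves trivial fibrations, so by adjunction duality the left adjoint $\PPhi_{\OO^n}$ sends monomorphisms (the cofibrations of the scaled Cartesian model structure) to projective cofibrations. As $(\Lambda^n_i)^\flat \to (\Delta^n)^\flat$ is a monomorphism, its image is a projective cofibration, and it remains only to prove it is a \emph{weak equivalence}, i.e.\ an objectwise weak equivalence in the marked model structure.

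To compute the two sides objectwise I would use that $\PPhi_{\OO^n}$ preserves colimits, so both $\mathfrak{D}^n$ and $\mathfrak{L}^n_i$ are the colimits of $\PPhi_{\OO^n}$ applied to the faces $\Delta^I \hookrightarrow \Delta^n$, indexed by the faces of $\Delta^n$ and of $\Lambda^n_i$ respectively. Unwinding the defining universal property of $\cchi_{\OO^n}$ and the enriched co-Yoneda lemma, $\PPhi_{\OO^n}((\Delta^I)^\flat)$ evaluated at an object $k$ is the nerve of an explicit subposet of $D^{[k,n]}$, cut out by the pullback functors of \eqref{eq:pullbackO}; in particular one identifies $\mathfrak{D}^n(k) \cong \N(D^{[k,n]})^\flat$, with the top face $I=[n]$ supplying all of $\N(D^n)$ at $k=0$. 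The markings stay minimal throughout, since the relevant $2$-morphisms of $\OO^n$ (such as $\{0,2\}\Rightarrow\{0,1,2\}$) are non-invertible, so all simplicial sets in play are flat.

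The decisive simplification is that the inclusion is an \emph{isomorphism} at every object $k>0$. Indeed, the only faces of $\Delta^n$ absent from $\Lambda^n_i$ are the top face $[n]$ and the face $[n]\setminus\{i\}$; since $0<i$, both contain $0$ and hence have minimum $0$, so their corepresentable contributions $y_0(k)=\N(\OO^n(k,0))^{\op}$ are empty for $k>0$ and affect only the value at the object $0$. Consequently $\mathfrak{L}^n_i(k)=\mathfrak{D}^n(k)$ for $k>0$, and the whole proposition reduces to showing that the single inclusion of flat simplicial sets
\[
\Lni \;=\; \mathfrak{L}^n_i(0)\;\hookrightarrow\;\mathfrak{D}^n(0)\;=\;\N(D^n)\;=\;\D^n
\]
is a trivial cofibration in the marked model structure.

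This last inclusion is the main task, and where I expect the real difficulty to lie; it is handled by the combinatorial analysis of $D^n$ promised for \S\ref{subsec:innerhorns}. The strategy is to realize $\Lni \hookrightarrow \N(D^n)$ as an inner anodyne map: first identify $\Lni$ as the subcomplex of $\N(D^n)$ assembled from the images of the faces $D^I$ for $I$ a proper face of $\Lambda^n_i$, and then filter $\N(D^n)$ by attaching the finitely many non-degenerate simplices of $D^n$ missing from $\Lni$, ordered so that each is attached along an inner horn. Since flat inner horn inclusions are trivial cofibrations in the marked model structure, this produces the desired weak equivalence. The crux is organizing this filtration inside the intricate poset $D^n$ (cf.\ Figure~\ref{fig:posets}), and it is precisely here that the hypothesis $0<i<n$ is indispensable: it guarantees that every attaching horn is \emph{inner}, whereas for an outer horn ($i=0$ or $i=n$) the analogous inclusion fails to be a categorical equivalence, just as $\Lambda^2_0\hookrightarrow\Delta^2$ does.
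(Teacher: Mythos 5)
Your reductions are correct and match the paper's: the cofibration half follows from Proposition \ref{prop:trfib} by adjunction; the identifications $\PPhi_{\OO^n}((\Delta^n)^\flat)\cong\mathfrak{D}^n$ and $\PPhi_{\OO^n}((\Lambda^n_i)^\flat)\cong\mathfrak{L}^n_i$ and the observation that the map is an isomorphism at every object $j>0$ (because $[j,n]$ is itself $i$-admissible, equivalently because the two missing faces both have minimum $0$) are exactly Lemma \ref{lem:adjtoLambd} and the discussion following it. So, as in the paper, everything comes down to the single inclusion $\Lni\hookrightarrow\D^n$ of minimally marked simplicial sets being a weak categorical equivalence.

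At precisely this point --- which is the entire technical content of the proposition, occupying all of \S\ref{subsec:innerhorns} --- your proposal has a genuine gap. You assert that $\Lni\hookrightarrow\D^n$ can be realized as an inner anodyne map by attaching the missing nondegenerate simplices of $\D^n$ one at a time along inner horns, but you neither exhibit such a filtration nor argue that one exists. This is not routine bookkeeping: inner anodyne maps form a strictly smaller class than trivial cofibrations in the Joyal model structure, so you are claiming something a priori stronger than what is needed, and its truth for general $n$ and $i$ is far from evident (already $\L^2_1\hookrightarrow\D^2$ is the spine inclusion of $\Delta^3$, inner anodyne only by a nontrivial argument; for larger $n$ the subcomplex $\Lni\subset\D^n$ becomes much more intricate, cf.\ Figure \ref{fig:posets}). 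The paper deliberately avoids any such filtration: it passes to the rigidification $\Cf[\Lni]\to\Cf[\D^n]$ and proves (Theorem \ref{thm:contractible}) that every mapping space $\Cf[\Lni](S,T)$ with $S\le T$ is contractible, using a chain model for these mapping spaces (Proposition \ref{prop:mapping}), an induction on the atomic distance $d(S,T)$ coming from an embedding $D^n\hookrightarrow\ZZ^{n-1}$, a case analysis organized by the covering of $\Lni$ by the subcomplexes $\A(J)$ for $i$-superior $J$, and several supporting contractibility and homotopy-colimit lemmas (Lemmas \ref{lem:close} through \ref{lem:colimit}). Until you either construct the claimed inner-anodyne filtration or supply an argument of comparable substance for the categorical equivalence, the proof is incomplete. (A minor quibble: the markings on $\PPhi_{\OO^n}((\Delta^n)^\flat)$ are minimal because the input is minimally marked, not because the $2$-morphisms of $\OO^n$ are non-invertible.)
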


Proposition \ref{prop:trfib} implies that $\PPhi_{\CC}$ preserves cofibrations. Proposition
\ref{prop:innhorn} will be the main technical ingredient to show that show that $\PPhi_{\CC}$ is
weakly equivalent to $\St_{\CC}$ and hence preserves weak equivalences. Moreover, $\PPhi_{\CC}$ will
then induce an equivalence on derived categories (since $\St_\CC$ does), and thus will be a Quillen
equivalence. 

\subsection{Proposition \ref{prop:trfib}}\label{subsec:trfib}

Throughout this section, we fix a $2$-category $\CC$. We want to show that $\cchi_{\CC}$ preserves trivial fibrations. Therefore we need to consider lifting problems against arbitrary cofibrations of marked simplicial sets. These are generated by 
\[
\left\lbrace(\Delta^1)^\flat\hookrightarrow (\Delta^1)^\sharp\right\rbrace
\]
and 
\[
\left\lbrace(\partial\Delta^n)^\flat\hookrightarrow (\Delta^n)^\flat\right\rbrace.
\]
Consequently, we will confine ourselves to these cases and find solutions to lifting problems of the form 
\begin{equation}\label{eq:red1lifting}
\begin{tikzcd}
(\Delta^1)^\flat \ar{r}\ar{d} & \cchi_{\CC}(F) \ar{d}\\
(\Delta^1)^\sharp \ar{r}\ar[dashed]{ur} & \cchi_{\CC}(G).
\end{tikzcd}
\end{equation}
\begin{equation}\label{eq:red2lifting}
\begin{tikzcd}
(\partial \Delta^n)^{\flat} \ar{r}{f}\ar{d} & \cchi_{\CC}(F) \ar{d}\\
(\Delta^n)^{\flat} \ar{r}{g}\ar[dashed]{ur} & \cchi_{\CC}(G)
\end{tikzcd}
\end{equation}
where $F \to G$ is a $\msSet$-enriched natural transformation of $\msSet$-enriched functors
$\CC^{(\op,\op)} \to \msSet$ which is a pointwise trivial fibration. Here, the map $\cchi_{\CC}(F) \to \cchi_{\CC}(G)$ is considered as a morphism in $\msSet_{/\Nsc(\CC)}$. A quick inspection shows that we can solve the lifting problem \eqref{eq:red1lifting}. The lifting problem given by \eqref{eq:red2lifting} is slightly more involved and requires some preparation.

We begin by developing some terminology that will help unravel the essence of these kinds of lifting
problems. In the following discussion, markings will not play any role so we systematically ignore
them and only refer to the underlying simplicial sets.

Let $n \ge 0$ and let $J \subset [n]$ be a nonempty subset. The pullback functor
\begin{equation}\label{eq:ref}
\rho_{J}^n: \OO^{[n]}(0,\min(J))^{\op} \times D^J \lra D^n
\end{equation}
is easily verified to be fully faithful; we denote its image by $A(J)$.\glsadd{AJ} We denote by $\Dn$ the
nerve of the poset $D^n$ and by $\A(J)$ the nerve of the poset $A(J)$. We can now define,

\begin{equation}\label{eq:sn1}
\Sn := \bigcup_{\Delta^J\subset \partial \Delta^n} \A(J) \subset \Dn.
\end{equation}

Suppose we are given a lifting problem of the form \eqref{eq:red2lifting}. According to Definition
\ref{defi:rel2nerve}, the map $g$ corresponds to a functor $\sigma: \OO^{[n]} \to \CC$
together with a compatible collection of maps 
\[
\Big\{g_I: \D^I \lra G(\sigma(\min(I)))\Big\}
\] 
parametrized by the nonempty subsets $I \subset [n]$. In particular, choosing $I = [n]$, we obtain a
functor
\[
\overline{g}: \Dn \lra G(\sigma(0)).
\] 
The additional data comprised in the map $f$ amounts to a compatible collection of functors
\[
\Big\{ f_J: \D^J \lra F(\sigma(\min(J)))\Big\}
\] 
parametrized by the subsets $J$ such that $\Delta^{J} \subset \partial \Delta^n$. We can use the functoriality of $F$ to transfer the $f_J$ from $\sigma(\min(J))$ to $\sigma(0)$, to obtain a functor as the composite
\[
f^n_J: A(J) \overset{\rho_J^n}{\longleftarrow} \OO^{[n]}(0,\min(J))^{\op} \times \D^J
\overset{\sigma \times f_J}{\lra} \CC(\sigma(0),\sigma(\min(J)))^{\op} \times
F(\sigma(\min(J))) \overset{F}{\lra} F(\sigma(0)).
\] 
The various functors $\{ f_J^n \}$ assemble to define a functor
\[
\overline{f}: \Sn \lra F(\sigma(0))
\]
making the solid part of the diagram 
\begin{equation}\label{eq:redlifting}
\begin{tikzcd}
\Sn \ar{r}{\overline{f}}\ar{d} & F(\sigma(0)) \ar{d}\\
\Dn\ar{r}{\overline{g}}\ar[dashed]{ur} & G(\sigma(0))
\end{tikzcd}
\end{equation}
commute. We call \eqref{eq:redlifting} the {\em reduced lifting problem} associated to \eqref{eq:red2lifting}.

\begin{proposition}\label{prop:reducedlifting} The solutions of a lifting problem of the form \eqref{eq:red2lifting} are in bijection with the solutions of the associated reduced lifting problem \eqref{eq:redlifting}.
\end{proposition}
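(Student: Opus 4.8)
The plan is to show that solving the lifting problem \eqref{eq:red2lifting} in the relative $2$-nerve is \emph{equivalent data} to solving the reduced lifting problem \eqref{eq:redlifting} of functors into $F(\sigma(0))$, by carefully tracking the definition of $n$-simplices in $\cchi_{\CC}$ from Definition \ref{defi:rel2nerve}. First I would observe that both lifting problems share the same ``base'' data: the map $g: (\Delta^n)^\flat \to \cchi_{\CC}(G)$ already fixes a functor $\sigma: \OO^{[n]} \to \CC$, and this $\sigma$ is \emph{not} altered by any lift, since a lift must commute with the projections to $\Nsc(\CC)$. Thus a solution to \eqref{eq:red2lifting} is precisely the extra data of a compatible family $\{\theta_I: \D^I \to F(\sigma(\min(I)))\}_{\emptyset \neq I \subseteq [n]}$ lying over the given $\{g_I\}$, and restricting on the boundary to the prescribed $\{f_J\}_{\Delta^J \subset \partial\Delta^n}$.

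The key step is to show that such a compatible family $\{\theta_I\}$ is \emph{uniquely determined} by its top piece $\theta_{[n]}: \D^n \to F(\sigma(0))$, and that the compatibility and boundary constraints on the whole family translate \emph{exactly} into the single constraint that $\theta_{[n]}$ restrict to the prescribed $\overline{f}$ on $\Sn$. For the first half, I would use the compatibility diagrams in Definition \ref{defi:rel2nerve}: given $J \subset [n]$, the functor $\theta_J$ is recovered from $\theta_{[n]}$ via the commuting square along $\rho_{J,[n]}$, precisely because $\rho_J^n$ is fully faithful with image $A(J)$ (as noted just above \eqref{eq:sn1}). Concretely, $\theta_J$ is pinned down by the restriction of $\theta_{[n]}$ to $\A(J)$ together with the functoriality of $F$ transporting from $\sigma(0)$ back to $\sigma(\min(J))$; conversely, any $\theta_{[n]}$ automatically generates a compatible family via these formulas, with the cocycle-type compatibilities for nested $J' \subset J \subset [n]$ following from the corresponding compositions of pullback functors $\rho_{J',J}$ and $\rho_{J,[n]}$.

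For the second half, I would match the boundary condition. The map $f$ supplies exactly the functors $\{f_J\}$ for $\Delta^J \subset \partial\Delta^n$, and by the very definition of $\overline{f}$ and of $\Sn = \bigcup_{\Delta^J \subset \partial\Delta^n} \A(J)$, requiring $\theta_{[n]}|_{\Sn} = \overline{f}$ is the same as requiring that each $\theta_J$ (for $J$ a proper face) equal the prescribed $f_J$. The main obstacle I anticipate is checking \emph{well-definedness} of $\overline{f}$ on the overlaps $\A(J) \cap \A(J')$: the functors $f^n_J$ and $f^n_{J'}$ must agree there, which amounts to a diagram chase using the compatibility of the original $f$ (i.e. the commuting squares for $J \cap J'$ relative to both $J$ and $J'$) together with the functoriality of $F$ applied to the pullback functors $\rho$. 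Once this gluing is verified, the assignment $\theta_{[n]} \mapsto (\theta_{[n]}|_{\Sn}, \theta_{[n]})$ exhibits the claimed bijection between solutions of \eqref{eq:red2lifting} and solutions of \eqref{eq:redlifting}, since a lift in \eqref{eq:redlifting} is exactly a $\theta_{[n]}: \D^n \to F(\sigma(0))$ extending $\overline{f}$ and lying over $\overline{g}$.
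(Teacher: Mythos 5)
The paper's own proof is literally ``Left to the reader,'' and your unwinding is the intended argument: a lift in \eqref{eq:red2lifting} is forced to have underlying simplex $\sigma$ and components $\theta_J = f_J$ for every proper face $J$ (by commutativity of the upper triangle), so the only free datum is $\theta_{[n]}\colon \D^n \to F(\sigma(0))$; the remaining compatibility squares of Definition \ref{defi:rel2nerve} (those with $I=[n]$) say exactly that $\theta_{[n]}|_{\A(J)} = f^n_J$, i.e.\ $\theta_{[n]}|_{\Sn} = \overline{f}$, while compatibility with $g$ in the slice amounts to $\theta_{[n]}$ lying over $\overline{g}$. One quibble: your claim that each $\theta_J$ is \emph{recovered} from $\theta_{[n]}$ by ``transporting back'' from $\sigma(0)$ to $\sigma(\min(J))$ is not correct in general --- the relation $\theta_{[n]}\circ\rho_{J,[n]} = F\circ(\sigma\times\theta_J)$ does not determine $\theta_J$, since $F(\sigma(S_1))$ need not be invertible. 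Fortunately this step is not needed: the $\theta_J$ for $J\subsetneq[n]$ are prescribed by $f$, not reconstructed from $\theta_{[n]}$, and the compatibilities among them hold because $f$ is already a map of simplicial sets. The point you rightly flag as having content, namely that the $f^n_J$ agree on the overlaps $\A(J)\cap\A(J')$ so that $\overline{f}$ is well defined on $\Sn$, does follow from the compatibility squares satisfied by $f$ together with the fact that $\sigma$ and $F$ respect composition (union of subsets) in $\OO^{[n]}$; note this check is required already to \emph{state} the reduced lifting problem, and the paper likewise asserts it without proof.
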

\begin{proof} 
	Left to the reader.
\end{proof}

Since the morphism $\Sn \to \D^n$ is a cofibration, Proposition \ref{prop:trfib} follows. 

\subsection{Proposition \ref{prop:innhorn}}\label{subsec:innerhorns}

This section is devoted to the key technical claim of this paper. Namely we show that the images of inner horn inclusions under $\PPhi_{\CC}$ are objectwise marked weak equivalences. We begin by identifying the images of $(\Lambda^n_i)^\flat$ and $(\Delta^n)^\flat$ under $\PPhi_{\OO^n}$. 

\begin{definition}
	For $n\geq 0$, we define a marked simplicially enriched functor\glsadd{Dadj}
	\[
	\mathfrak{D}^n: (\OO^n)^{(\op,\op)} \to \msSet; \quad j \mapsto \left(\D^{[j,n]}\right)^\flat. 
	\]
	For $0<i<n$, we define a second marked simplicially enriched functor \glsadd{Ladj}
	\[
	\mathfrak{L}^n_i: (\mathbb{O}^n)^{(\op,\op)} \to \msSet 
	\]
	given on objects by 
	\[
	j\mapsto \left(\bigcup_{\substack{I\subset [j,n]\\\Delta^I\subset \Lambda^n_i}} \OO^{[j,n]}(j,\min(I))^{\op}\times \D^I\right)^\flat\subset \left(\D^{[j,n]}\right)^\flat
	\]
	with enriched functoriality given by the formulae 
	\[
	\OO^n(k,j)^\op \times \OO^{[j,n]}(j,\min(I))^\op\times \D^I\to \OO^{[k,n]}(k,\min(I))^\op\times \D^I; \quad (S,T,U)\mapsto (S\cup T, U).
	\]
\end{definition}

To ease writing, we refer to those subset $I\subset [n]$ such that $\Delta^I\subset \Lambda^n_i$ as \emph{$i$-admissible}. 

\begin{lemma}\label{lem:adjtoLambd}
	Consider $(\Delta^n)^\flat \to\Delta^n$ and $(\Lambda^n_i)^\flat\to \Delta^n$ as objects in $(\msSet)_{/\Nsc(\OO^n)}$.
	\begin{enumerate}
		\item For $n\geq 0$, 
		\[
		\PPhi_{\OO^n}((\Delta^n)^\flat)\cong \mathfrak{D}^n.
		\]
		\item\label{adjtoLamb:2} For $0<i<n$, 
		\[
		\PPhi_{\OO^n}((\Lambda^n_i)^\flat)\cong \mathfrak{L}^n_i.
		\]
	\end{enumerate}
\end{lemma}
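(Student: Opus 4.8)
The plan is to prove both isomorphisms by exhibiting the relevant left adjoints explicitly and verifying the defining universal property against the relative $2$-nerve $\cchi_{\OO^n}$. Recall that $\PPhi_{\OO^n}$ is defined as the left adjoint to $\cchi_{\OO^n}$, so to establish an isomorphism $\PPhi_{\OO^n}((\Delta^n)^\flat)\cong \mathfrak{D}^n$ it suffices to produce, for every $\msSet$-enriched functor $F:(\OO^n)^{(\op,\op)}\to\msSet$, a natural bijection
\[
\Hom_{\Fun_{\msSet}}(\mathfrak{D}^n, F)\cong \Hom_{(\msSet)_{/\Nsc(\OO^n)}}\big((\Delta^n)^\flat, \cchi_{\OO^n}(F)\big).
\]
The right-hand side unwinds, via Definition \ref{defi:rel2nerve}, to the data of a compatible family of maps $\theta_I:\N(D^I)^\flat\to F(\min I)$ indexed by nonempty $I\subset[n]$, subject to the coherence squares involving the pullback functors $\rho_{J,I}$; here the structure map to $\Nsc(\OO^n)$ is forced to be the identity $n$-simplex since we work over $\OO^n$ itself.

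\textbf{First}, for part (1), I would identify exactly this family of $\theta_I$ with a single $\msSet$-enriched natural transformation $\mathfrak{D}^n\to F$. The key observation is that the coherence squares in Definition \ref{defi:rel2nerve} are precisely the naturality squares for the enriched functoriality built into $\mathfrak{D}^n$: the assignment $j\mapsto(\D^{[j,n]})^\flat$ together with the structural maps $\OO^n(k,j)^{\op}\times\D^{[j,n]}\to\D^{[k,n]}$ induced by $\rho$ encodes exactly the compatibility data. Concretely, a natural transformation $\mathfrak{D}^n\to F$ assigns to each object $j\in\OO^n$ a map $(\D^{[j,n]})^\flat\to F(j)$, and setting $I=[\min I, \ldots]$ one matches $[j,n]$ against the subsets $I$ with $\min(I)=j$. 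The bookkeeping here is to check that the intervals $[j,n]$ together with the enriched naturality reproduce the full indexing by arbitrary nonempty $I\subset[n]$ via the pullback functors $\rho_{J,I}$ of \eqref{eq:pullbackO}; this is where one uses that $D^I\cong D^{[\min I,\,\max I]}$ restricted appropriately and that $A(J)$ arises as the image of $\rho_J^n$.

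\textbf{Next}, for part (2), the same adjunction argument applies, but now the $n$-simplex $(\Delta^n)^\flat$ is replaced by the sub-simplicial-set $(\Lambda^n_i)^\flat$. Under $\PPhi_{\OO^n}$, taking a colimit over the faces, one gets precisely the union $\bigcup_{\Delta^I\subset\Lambda^n_i}\A(J)$-type expression appearing in the definition of $\mathfrak{L}^n_i$: the horn $\Lambda^n_i$ is the union of its faces $\Delta^I$ with $I$ being $i$-admissible, and $\PPhi_{\OO^n}$, being a left adjoint, preserves this colimit. Thus $\PPhi_{\OO^n}((\Lambda^n_i)^\flat)$ is computed as the corresponding colimit of the $\PPhi_{\OO^n}((\Delta^I)^\flat)$, glued along their shared faces, which by part (1) (applied to each face) yields exactly the objectwise formula $j\mapsto\big(\bigcup_{I\subset[j,n],\,\Delta^I\subset\Lambda^n_i}\OO^{[j,n]}(j,\min(I))^{\op}\times\D^I\big)^\flat$ for $\mathfrak{L}^n_i$.

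\textbf{The main obstacle} I anticipate is verifying that the colimit defining $\PPhi_{\OO^n}((\Lambda^n_i)^\flat)$ is computed correctly objectwise, i.e., that passing to the left adjoint and then evaluating at each $j\in\OO^n$ commutes with the union over admissible faces. Since colimits in the projective model structure (and indeed in the functor category $\Fun_{\msSet}((\OO^n)^{(\op,\op)},\msSet)$) are computed objectwise, and $\PPhi_{\OO^n}$ is a left adjoint hence preserves all colimits, this should reduce to the purely combinatorial claim that the unions of subposets $\OO^{[j,n]}(j,\min(I))^{\op}\times D^I\subset D^{[j,n]}$ assemble correctly — that the intersections of the pieces $A(J)$ match the faces shared by the simplices $\Delta^I$ in the horn. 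I would carry this out by identifying the gluing diagram of faces of $\Lambda^n_i$ with the corresponding diagram of subposets inside $D^{[j,n]}$ for each fixed $j$, using the full faithfulness of $\rho_J^n$ established after \eqref{eq:ref} to ensure the images $A(J)$ intersect as prescribed by the simplicial structure. The enriched functoriality formula $(S,T,U)\mapsto(S\cup T,U)$ must be checked to be compatible with these identifications, which is a direct but slightly delicate computation with the union operation defining composition in $\OO^n$.
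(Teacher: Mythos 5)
Your proposal is correct and matches the paper's (unwritten) argument: the paper's proof is literally ``follows from unraveling the definitions,'' and your unraveling --- identifying a map $(\Delta^n)^\flat\to\cchi_{\OO^n}(F)$ over the tautological $n$-simplex with an enriched natural transformation $\mathfrak{D}^n\to F$ via $\alpha_j=\theta_{[j,n]}$, then computing $\PPhi_{\OO^n}((\Lambda^n_i)^\flat)$ as an objectwise colimit over the $i$-admissible faces --- is exactly the intended route. You also correctly isolate the one point that genuinely needs checking in part (2), namely that the images $\A(I)$ intersect as prescribed by the face poset of the horn (e.g.\ $A(I)\cap A(I')=A(I\cap I')$) so that the colimit of subobjects is the union.
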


\begin{proof}
	Follows from unraveling the definitions.
\end{proof}

It is work noting that for $i,j>0$, $[j,n]\subset [n]$ is itself an $i$-admissible set. Consequently, we have that, for $j>0$, $\mathfrak{L}^n_i(j)\cong \mathfrak{D}^n(j)$. This reduces the problem of showing that $\mathfrak{L}^n_i\Rightarrow \mathfrak{D}^n$ is an objectwise marked weak equivalence to the problem of showing that $\mathfrak{L}^n_i(0)\to \mathfrak{D}^n(0)$ is a marked weak equivalence. Moreover, since both marked simplicial sets carry the minimal marking, it will suffice to show that this is a weak equivalence in the Joyal model structure.  

\begin{definition}
	For ease of notation, we set
	\[
	\L^n_i:= \mathfrak{L}^n_i(0) = \bigcup_{\Delta^J\subset \Lambda^n_i} \A(J) \subset \Dn.
	\]
\end{definition}

We now turn to showing that the map of simplicial sets
\[
\Lni \hra \Dn
\]
is a trivial cofibration for the Joyal model structure. This in turn is equivalent to the statement
that the induced functor of simplicial categories 
\[
\Cf[\Lni] \hra \Cf[\Dn]
\] 
is a weak equivalence. This latter functor induces a bijection on objects. Furthermore, since $\Dn$ is the
nerve of a poset, the simplicial set $\Cf[\Dn](S,T)$ is contractible if $S \le T$ and empty otherwise.
Therefore, Proposition \ref{prop:innhorn} is an immediate corollary of the following main result of this section:

\begin{theorem}\label{thm:contractible}
	Let $0 < i <n$ and let $S \le T$ be elements of $D^n$. Then the simplicial set
	$\Cf[\Lni](S,T)$ is contractible.
\end{theorem}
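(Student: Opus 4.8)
We need to show that for $0 < i < n$ and elements $S \le T$ in $D^n$, the mapping space $\Cf[\Lni](S,T)$ is contractible. Here $\Cf$ is the rigidification functor (left adjoint to the homotopy-coherent nerve), $\Dn$ is the nerve of the poset $D^n$, and $\Lni$ is the subcomplex built from the $i$-admissible faces.

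Let me think about what $\Cf[\Lni](S,T)$ looks like. For the nerve of a poset $P$, $\Cf[N(P)](S,T)$ is the nerve of the poset of "chains from $S$ to $T$" — more precisely, it's related to subsets of the interval $[S,T]$ containing both endpoints, ordered appropriately. Actually the standard fact: $\Cf[N(P)](S,T)$ is the nerve of the poset $\{$chains $S = S_0 < S_1 < \cdots < S_k = T\}$, which is contractible (it has a maximal element, the full chain) when $S \le T$.

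For $\Lni$, a simplicial subset of $\Dn$, the mapping space $\Cf[\Lni](S,T)$ consists of those chains all of whose "bracketings" (the simplices they generate) lie in $\Lni$. So $\Cf[\Lni](S,T)$ is the nerve of the sub-poset of chains from $S$ to $T$ that are "admissible" — meaning they stay within the $i$-admissible faces $\A(J)$.

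**My approach.** The plan is to identify $\Cf[\Lni](S,T)$ explicitly as the nerve of a poset $\P^n(S,T)$ of admissible chains, then show this poset is contractible. Let me think about the contraction.

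First, I'd set up the combinatorial description. A point of $\Cf[\Dn](S,T)$ is a chain $S = S_0 < S_1 < \cdots < S_k = T$ in $D^n$; these form a poset under refinement. The subcomplex condition for $\Lni$ is that the chain must be "covered" by the $i$-admissible subsets — i.e., there's some $i$-admissible $J$ such that the whole chain lies in $A(J)$, OR more subtly, each successive pair $(S_{j-1}, S_j)$ or the full chain sits inside some $\A(J)$. I need to pin down the exact admissibility condition from the structure of $\Lni = \bigcup_{\Delta^J \subset \Lambda^n_i} \A(J)$.

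**Key steps.**
1. Identify $\Cf[\Dn](S,T)$ as the nerve of the poset of chains from $S$ to $T$, and verify the standard contractibility (full chain is terminal).
2. Describe $\Cf[\Lni](S,T)$ as the nerve of the sub-poset $\P^n(S,T)$ of admissible chains, using the fact that a simplex of $\Dn$ lies in $\Lni$ iff it factors through some $\A(J)$ with $J$ $i$-admissible. The glossary entry for $\P^n(S,T)$ confirms this is the intended object.
3. Show $\P^n(S,T)$ is contractible. The natural strategy is to exhibit a contraction, e.g. find a canonical admissible refinement/coarsening. Since the full chain need not be admissible (it may pass through non-admissible faces), I can't just use the terminal-object argument. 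Instead I expect to use the embedding $x: \D^n \hookrightarrow \ZZ^{n-1}$ (glossary entry) to translate admissibility into a lattice condition, then run a zig-zag of poset maps (e.g. pushing chains toward a fixed admissible "spine") to build a contracting homotopy via the usual $A \le g(A) \ge B$ argument for nerves of posets.

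**The main obstacle.** The hard part will be step 3: establishing contractibility of $\P^n(S,T)$ when the natural terminal chain fails to be admissible. The difficulty is that $i$-admissibility (the condition $\Delta^J \subset \Lambda^n_i$, i.e. $J \neq [n]$ and $J \neq [n]\setminus\{i\}$) interacts subtly with the poset structure of $D^n$, so I expect to need a careful case analysis on where $S$ and $T$ sit relative to the vertex $i$ and the missing faces. I anticipate the cleanest route is to find a single intermediate admissible chain $c_0$ (depending on $S,T,i$) together with poset maps $c \mapsto c \cup c_0$ that stay admissible, giving a contraction of $\P^n(S,T)$ onto $c_0$ through comparable elements — but verifying that the union of two admissible chains remains admissible (i.e., lands in a single $\A(J)$ for $i$-admissible $J$) is exactly where the combinatorics of the horn $\Lambda^n_i$ must be exploited, and this is where the real work lies.
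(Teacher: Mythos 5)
Your steps (1) and (2) match the paper's setup: the identification of $\Cf[\K](S,T)$ with the nerve of a poset of chains ordered by refinement, with the subcomplex condition that consecutive pairs span simplices lying in $\K$, is exactly Proposition \ref{prop:mapping}. But step (3) --- the entire content of the theorem --- is only a proposed strategy, and the strategy you sketch does not work. The map $c \mapsto c \cup c_0$ is not even well-defined: $D^n$ is not totally ordered for $n \ge 3$ (e.g.\ in $D^3$ the elements $02$ and $013$ are incomparable), so the union of two chains from $S$ to $T$ need not be a chain, and there is no canonical admissible ``spine'' onto which to contract. Worse, when no single $i$-admissible face $A(J)$ contains both $S$ and $T$, the edge $S<T$ is not even a vertex of the chain poset for $\Lni$, so every admissible chain has length $\ge 2$ and the poset of admissible chains has no extremal element; a one-step zig-zag contraction cannot exist.

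The paper's actual argument is substantially different and more elaborate: it proceeds by induction on the ``atomic distance'' $d(S,T)$ measured via the lattice embedding $x: D^n \hookrightarrow \ZZ^{n-1}$, splitting into three cases according to how $S$ and $T$ sit relative to the cover of $\Lni$ by the maximal ($i$-superior) admissible faces. When no such face contains both $S$ and $T$, the mapping space is computed as a projectively cofibrant homotopy colimit, over the contractible poset of chains of length $\ge 2$, of products of smaller mapping spaces which are contractible by induction (Lemmas \ref{lem:distant}, \ref{lem:cofibrant}). When some face contains the whole interval $[S,T]$, the terminal-chain argument applies. The hybrid case requires a further homotopy colimit over a Grothendieck construction indexed by sets of $i$-superior subsets (Lemmas \ref{lem:admissible}, \ref{lem:colimit}). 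None of this machinery --- the distance induction, the close/distant dichotomy, the cofibrancy of the covering diagrams, or the case analysis on $i$-superior sets --- appears in your proposal, and your own text concedes that ``this is where the real work lies.'' The proof is therefore incomplete at its essential step.
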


\begin{corollary}\label{cor:trivjoyal} For every $0 < i < n$, the map of simplicial sets
	\[
	\Lni \hra \Dn
	\]
	is a trivial cofibration for the Joyal model structure.
\end{corollary}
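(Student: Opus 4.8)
The plan is to translate the statement into a question about the homotopy type of an explicit poset and then resolve that question by a covering argument adapted to the lattice structure of $D^n$. First I would pin down the combinatorial model for the mapping spaces. Since $\Cf$ is a left adjoint it preserves monomorphisms, so $\Cf[\Lni](S,T)$ is literally a sub-simplicial set of $\Cf[\Dn](S,T)$; but it is \emph{not} the nerve of the poset of chains from $S$ to $T$ supported in $\Lni$, and identifying the correct model is the first subtlety. Indeed, the simplices of $\Cf$ of a subcomplex of a poset nerve are governed by the standard necklace description: a simplex of $\Cf[\Dn](S,T)$ is recorded by a necklace $S=v_0\to\cdots\to v_k=T$ of chains in $D^n$ together with a flag of vertex subsets, and the necklaces are genuinely needed. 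For instance, when $n=2$, $i=1$ the subcomplex $\Lni$ is the spine of $\Delta^3$, and the unique path from $0$ to $02$ is a four-bead necklace, none of whose beads reaches from $S$ to $T$ inside a single $A(J)$. I would package this description into the poset $\P^n(S,T)$ advertised in the glossary, using the embedding $x\colon D^n\hra\ZZ^{n-1}$ to equip everything with coordinates, and verify $\N(\P^n(S,T))\simeq\Cf[\Dn](S,T)$, thereby recovering contractibility of the ambient mapping space.

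Next I would identify the sub-object cut out by $\Lni$. A necklace lies in $\Lni$ precisely when each of its beads is a chain contained in $A(J)$ for some $i$-admissible $J$; I would encode this as a subposet $\P^n_i(S,T)\subseteq\P^n(S,T)$ with $\N(\P^n_i(S,T))$ naturally weakly equivalent to $\Cf[\Lni](S,T)$, and rewrite the admissibility condition in the $\ZZ^{n-1}$-coordinates. At this point the claim becomes purely combinatorial: show that $\N(\P^n_i(S,T))$ is contractible for every $S\le T$.

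The bulk of the work is this contractibility, and I would approach it by a nerve-of-a-cover argument carried out at the level of the poset nerve (rather than as a homotopy colimit of simplicial categories, which would entangle the composition and is best avoided). The sets $\A(J)$, $J$ ranging over the $i$-admissible subsets, cover $\Lni$, but the raw cover has poorly-behaved intersections; this is exactly what the thickenings $\B(J)$ are for. I would replace each $\A(J)$ by $\B(J)$, chosen so that (i) each $\B(J)$ is the nerve of a sub-poset that is order-convex in the $\ZZ^{n-1}$-coordinates, whence $\Cf[\B(J)](S,T)$ is contractible or empty (an order-convex region carries a maximal necklace, giving a cone point), and (ii) every nonempty finite intersection $\bigcap_J\B(J)$ is again order-convex, so its mapping space is likewise contractible or empty. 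Covering $\N(\P^n_i(S,T))$ by the subcomplexes of necklaces supported in the various $\B(J)$ and invoking the nerve lemma then reduces contractibility of $\Cf[\Lni](S,T)$ to contractibility of the index poset recording which families of admissible $J$ jointly admit a path from $S$ to $T$. The final step is to show this index poset is contractible, and it is here that the hypothesis $0<i<n$ must enter: deleting the $i$-th and the top face should never disconnect the relevant necklaces, so the index poset should be shown to be a cone (or to collapse onto one).

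The hard part will be Step 3, and within it the intersection bookkeeping: verifying that the thickened cover by the $\B(J)$ is genuinely \emph{good} — contractible pieces \emph{and} contractible intersections — and that the associated index poset is contractible. The thickening $\B(J)$ is precisely the device that converts a delicate necklace computation into a statement about order-convex regions in $\ZZ^{n-1}$, where contractibility follows from the existence of extremal elements; getting the thickening large enough to make intersections nonempty yet small enough to remain inside $\Lni$ is the technical crux. A clean alternative to the nerve lemma, which I would keep in reserve, is to exhibit an explicit contraction of $\P^n_i(S,T)$ directly: a monotone ``straightening'' endomorphism comparable to the identity whose image lies in a subposet with a maximum, so that the identity is homotopic to a constant by the standard poset homotopy lemma.
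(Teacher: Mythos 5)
Your reduction of the corollary to the contractibility of the mapping spaces $\Cf[\Lni](S,T)$ for $S\le T$, and your identification of these via the chain/necklace model $\P^n_{\Lni}(S,T)\subset\P^n(S,T)$, both match the paper (Theorem \ref{thm:contractible} and Proposition \ref{prop:mapping}), and you correctly isolate the central subtlety: a simplex of $\Cf[\Lni](S,T)$ is a necklace whose individual beads lie in $\Lni$ while the necklace as a whole need not lie in any single $\A(J)$. But your Step 3 --- a one-level nerve-of-a-cover argument with order-convex thickenings $\B(J)$ --- does not survive exactly this subtlety. Already in your own example ($n=2$, $i=1$, $S=0$, $T=02$, with $\L^2_1$ the spine of $\Delta^3$), the mapping space is the single necklace $0\to 01\to 012\to 02$; any order-convex subposet of $D^2$ containing both $0$ and $02$ is all of $D^2$, whose nerve contains the edge $0\to 02\notin\L^2_1$. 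So there is no piece $\B(J)\subseteq\L^2_1$ of the kind you describe that contains this necklace, and the union of the $\Cf[\B(J)](S,T)$ fails to cover $\Cf[\L^2_1](S,T)$. The tension you yourself flag as ``the technical crux'' is not resolvable in the form you propose. Your reserve plan fails for a related reason: $\P^n_{\Lni}(S,T)$ is not the nerve of a subposet of $\P^n(S,T)$ --- whether a simplex belongs depends on the entire flag of refinements (the coarsest chain determines the beads, the finest determines their vertices), not just on its set of vertices --- so a monotone endomorphism of the chain poset does not induce a contraction of this non-full subcomplex.

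The missing idea is recursion on the beads. The paper proves Theorem \ref{thm:contractible} by induction on the atomic distance $d(S,T)$ (measured via the embedding $x\colon D^n\hra\ZZ^{n-1}$), writing $\Cf[\Lni](S,T)$ as a homotopy colimit, over the poset of chains $S<M_1<\cdots<M_k<T$ of length $\ge 2$, of the products $\Cf[\Lni](S,M_1)\times\cdots\times\Cf[\Lni](M_k,T)$, whose factors are contractible by the induction hypothesis. The other essential inputs are that this chain poset is contractible when $S$ and $T$ are distant (Lemma \ref{lem:distant}), that close pairs lie in a single $\A(J)$ (Lemma \ref{lem:close}), and a hybrid case handled by a Grothendieck construction over an index poset whose elements record both a family of $i$-superior subsets and a chain. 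Your covering intuition survives in that last case, but only as the top layer of a two-level decomposition whose bottom layer is the induction; without the induction, the pieces indexed by the empty family are precisely the mapping spaces whose contractibility is being proven.
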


To prepare the proof of Theorem \ref{thm:contractible}, we begin with an explicit description of the
mapping spaces $\Cf[\K](S,T)$ where $\K \subset \D^n$ is a simplicial subset. We define a \emph{strict chain of length $\ell$ between $S$ and $T$} to be a sequence 
\[
S = M_0 < M_1 < M_2 < \cdots < M_{\ell-1} < M_{\ell} = T 
\]
of objects in $D^n$. Let $P^n(S,T)$ denote the poset of strict chains
between $S$ and $T$ in $D^n$ with length $\ell > 0$, ordered by refinement $\underline{M} \subset \underline{M}^\prime$. Furthermore,
let $\P^n(S,T)$ denote the nerve of $P^n(S,T)$. \glsadd{Pn} We define
\[
\P^n_{\K}(S,T) \subset \P^n(S,T)
\]
to be the simplicial subset consisting of those simplices whose corresponding sequence of chain
refinements
\[
\underline{M}^{(0)} \subset \underline{M}^{(1)} \subset \cdots \subset \underline{M}^{(k)}
\]
satisfies the following condition:
\begin{itemize}
	\item for every consecutive pair of elements $M_r^{(0)} < M_{r+1}^{(0)}$ in the chain
	$\underline{M}^{(0)}$, the simplex of $\Dn$ corresponding to the totally ordered
	chain
	\[
	\Big\{M^\prime \in \underline{M}^{(k)} \;\Big|\; M_r^{(0)} < M^\prime < M_{r+1}^{(0)} \Big\} \subset D^n
	\]
	is contained in $\K$.
\end{itemize}
With this terminology, we have the following:

\begin{proposition}\label{prop:mapping} Let $\K \subset \D^n$ be a simplicial subset containing the
	pair $(S,T)$ of vertices of $\D^n$. Then there is an isomorphism of simplicial sets
	\[
	\Cf[\K](S,T) \cong \P^n_{\K}(S,T).
	\]
\end{proposition}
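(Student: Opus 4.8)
The plan is to identify both sides with the same explicit combinatorial model, using the standard description of the mapping spaces of a rigidified simplicial set together with the fact that $\Dn$ is the nerve of a poset. Recall that for an arbitrary simplicial set $X$ and vertices $a,b$, the space $\Cf[X](a,b)$ is computed by the necklace model of Dugger and Spivak: a $p$-simplex is represented by a necklace $N\to X$ from $a$ to $b$ together with a flag of subsets of the vertex set of $N$ lying between the set of joints of $N$ and its full vertex set, two such data being identified when they differ by a map of necklaces over $X$. First I would feed $X=\K\subseteq\Dn$ into this model.

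The starting point is the case $\K=\Dn$. Since $\Dn=\N(D^n)$ is the nerve of the poset $D^n$ (Proposition \ref{prop:GIposet}), every nondegenerate simplex of $\Dn$ is a strict chain in $D^n$, so a totally nondegenerate necklace $N\to\Dn$ from $S$ to $T$ is the same datum as a strict chain $S=M_0<\cdots<M_m=T$ (its vertex set) equipped with a distinguished sub-chain of joints containing $S$ and $T$, the beads being the sub-chains lying between consecutive joints. Incorporating the accompanying flag, a $p$-simplex of $\Cf[\Dn](S,T)$ is thereby identified with a sequence of refinements $\underline{M}^{(0)}\subseteq\cdots\subseteq\underline{M}^{(p)}$ of strict chains from $S$ to $T$, in which $\underline{M}^{(0)}$ records the joints and $\underline{M}^{(p)}$ the full vertex set. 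This is precisely a $p$-simplex of $\P^n(S,T)$, and, after checking compatibility with faces and degeneracies, yields $\Cf[\Dn](S,T)\cong\P^n(S,T)$; this is consistent with the contractibility noted before Theorem \ref{thm:contractible}, as $\P^n(S,T)$ is the nerve of a poset with least element the chain $S<T$.

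It then remains to match, under this isomorphism, the subobject $\Cf[\K](S,T)$ with $\P^n_{\K}(S,T)$. Because $\Cf$ preserves monomorphisms, the inclusion $\K\hra\Dn$ exhibits $\Cf[\K](S,T)$ as a simplicial subset of $\Cf[\Dn](S,T)\cong\P^n(S,T)$, so it suffices to show that the two subsets agree in each degree. The mechanism is that the coarsest chain $\underline{M}^{(0)}$ of a flag provides a canonical factorization of the corresponding morphism of $\Cf[\Dn]$ into single-bead pieces: the $r$-th piece lands in the image of $\Cf[\Delta^{B_r}]$, where $B_r$ is the bead determined by the $r$-th and $(r{+}1)$-th joints together with the vertices of $\underline{M}^{(p)}$ between them. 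A necklace into $\Dn$ factors through the subcomplex $\K$ if and only if each of its beads is a simplex of $\K$; unwinding the joint-versus-vertex bookkeeping, this is exactly the condition defining $\P^n_{\K}(S,T)$. For the inclusion $\P^n_{\K}(S,T)\subseteq\Cf[\K](S,T)$ one then exhibits such a flag as the composite of its bead-flags, each lying in $\Cf[\Delta^{B_r}]\subseteq\Cf[\K]$.

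The hard part will be the reverse inclusion, i.e.\ showing that $\Cf[\K](S,T)$ contains nothing beyond these composites of bead-simplices drawn from $\K$: one must rule out spurious simplices and confirm that restricting $\Cf$ from $\Dn$ to $\K$ cuts the mapping space down exactly along the bead condition. This is the genuine content of the necklace computation — the colimit defining $\Cf[\K]$ being controlled by its totally nondegenerate necklaces — and the delicate points are the precise dictionary between joints and $\underline{M}^{(0)}$, between beads and the $\K$-condition (including the exact convention by which the relevant sub-chain of $\underline{M}^{(p)}$ is required to lie in $\K$), and the correct handling of degenerate flags and of the identifications built into the necklace model.
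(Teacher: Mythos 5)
Your proposal is correct and takes essentially the same approach as the paper: the paper's own proof is a two-sentence pointer to exactly this computation, explicitly citing the Dugger--Spivak necklace model for ordered simplicial sets as one of its two suggested routes, and your argument is a fleshed-out version of that route (including the correct dictionary between joints and $\underline{M}^{(0)}$, beads and the $\K$-condition).
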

\begin{proof}
	This follows by explicit computation of $\Cf[\K](S,T)$, representing $\K$ as a colimit over
	its nondegenerate simplices based on \cite[1.1.5.9]{LurieHTT}. It can also be seen very
	nicely, by using the Dugger-Spivak necklace model for ordered simplicial sets
	\cite[4.11]{Dugger-Spivak}. 
\end{proof}

For the remainder of this section, we will always identify mapping spaces of the form $\Cf[\K](S,T)$
with the model $\P^n_{\K}(S,T)$ provided by Proposition \ref{prop:mapping}. In particular, we will
implicitly use this chain model to describe the mapping spaces of the simplicial category
$\Cf[\Lni]$.

We will now introduce some terminology which will allow us to get a handle on the combinatorics of
the poset $D^n$, the simplicial set $\Lni$, and its associated simplicial category $\Cf[\Lni]$. For the
remainder of the section, we fix $n \ge 2$, and $0 < i < n$. We first note that the poset $D^n$ can
be realized as a full subposet of $\ZZ^{n-1}$. Namely, let $e_j$ denote the $j$th unit coordinate
vector of $\ZZ^{n-1}$ and consider the embedding\glsadd{x}
\[
x: D^n \hra \ZZ^{n-1},\; J \mapsto \max(J) e_{n-1} + \sum_{\substack{0 < j < \max(J)\\j \notin
		J}} e_j.
\]
In fact, the visualizations of the posets in Figure \ref{fig:posets} are obtained precisely via this embedding.
We further denote by $x_i$ the postcomposition of $x$ with projection to the $i$th coordinate of
$\ZZ^{n-1}$. Various useful notions arise from these geometric coordinates on $D^n$: 

\begin{definition} Let $S \le T$ be a pair of comparable elements of $D^n$.
	\begin{enumerate}
		\item The edge $S \le T$ is called {\em atomic} if $x(T) - x(S)$ is a unit coordinate
		vector of $\ZZ^{n-1}$. Note that being atomic means that $S < T$ and there does not
		exist $M \in D^n$ with $S < M < T$. 
		\item We define the {\em atomic distance} between $S$ and $T$ as the taxicab distance
		\[
		d(S,T) := \sum_{1 \le i \le n-1} x_i(T) - x_i(S).
		\]
		Note that the atomic distance between $S$ and $T$ measures the length of any chain
		\[
		S = M_0 < M_1 < \cdots < M_{l} = T
		\]
		with $M_r < M_{r+1}$ atomic. 
		\item We call $S$ and $T$ {\em close} if there exists a rectilinear unit cube in
		$\ZZ^{n-1}$ containing the points $x(S)$ and $x(T)$ among its vertices.
		Equivalently, $S$ and $T$ are close if $x_{n-1}(T) - x_{n-1}(S) \le 1$.
		\item We call $S$ and $T$ {\em distant} if they are not close.\\
	\end{enumerate}
\end{definition}

It will be convenient to introduce notation for a certain class of $i$-admissible subsets of $[n]$.
A subset $J \subset [n]$ is called {\em $i$-superior} if it is maximal among all $i$-admissible
subsets with the same minimal element. Note that, by maximality, the collection of simplicial sets
$\{\A(J)\}$ where $J$ runs through all $i$-superior subsets covers $\Lni$:\glsadd{Lni}
\[
\Lni = \bigcup_{\Delta^J\subset\Lambda^n_i} \A(J) =
\bigcup_{\substack{\Delta^J\subset\Lambda^n_i\\\text{$i$-superior}}} \A(J)\subset \Dn.
\]
This covering indexed by $i$-superior sets will play a central role in the proof of Theorem
\ref{thm:contractible}.

\begin{remark}\label{rem:superior} 
	The $i$-superior subsets of $[n]$ are precisely the subsets of the form
	\begin{enumerate}
		\item $[n] \setminus \{j\}$ where $j \ne i$,
		\item $\{k,k+1,\cdots,n\}$ where $k \ge 1$.
	\end{enumerate}
\end{remark}

\begin{proof}[Proof of Theorem \ref{thm:contractible}]
	
	The proof will be given by induction on the atomic distance between $S$ and $T$. 
	
	For the base of the induction, suppose that $S$ and $T$ have atomic distance $1$. Then they are
	close so that, by Lemma \ref{lem:close}, the edge $S < T$ is contained in $\Lni$. Therefore,
	$\Cf[\Lni](S,T)$ consists of a single vertex representing this edge and is hence contractible.
	
	Suppose now, for the inductive step, that $S$ and $T$ have atomic distance $d \ge 2$ and assume that, for every pair $S' <
	T'$ with atomic distance $d(S',T') < d$, the simplicial set $\Cf[\Lni](S',T')$ is contractible. We
	distinguish three cases:
	\begin{enumerate}[label={(\Roman*)}]
		\item\label{c:1}{\em There does not exist an $i$-superior subset $J$ such that $A(J)$
			contains both $S$ and $T$.}
		
		This implies that every vertex of $\Cf[\Lni](S,T)$ corresponds to a chain
		between $S$ and $T$ of length $\ge 2$. Further, by Lemma \ref{lem:close}, $S$ and $T$ must be distant
		so that, by Lemma \ref{lem:distant}, the poset $P^n(S,T)_{\ge 2} \subset P^n(S,T)$ of chains between
		$S$ and $T$ of length $\ge 2$ is contractible.
		The functor
		\[
		\begin{aligned}
		p:\; (P^n(S,T)_{\ge 2})^{\op} &\lra \sSet,\\
		\big\{S < M_1 < \cdots M_k < T \big\} &\mapsto \Cf[\Lni](S,M_1) \times
		\Cf[\Lni](M_1,M_2) \times \cdots \times \Cf[\Lni](M_k,T)
		\end{aligned}
		\]
		satisfies 
		\[
		\colim p \cong \Cf[\Lni](S,T).
		\]
		By Lemma \ref{lem:cofibrant}, the diagram $p$ is projectively cofibrant, so that, by Remark \ref{rem:cofibrant}, the
		latter colimit is a homotopy colimit. By induction hypothesis, the values of $p$ are contractible
		simplicial sets so that, since $P^n(S,T)_{\ge 2}$ is contractible as well, we deduce that
		$\Cf[\Lni](S,T)$ is contractible.
		
		\item\label{c:2}{\em There exists an $i$-superior subset $J$ such that $A(J)$ contains every $M
			\in D^n$ with $S \le M \le T$.} 
		
		By Proposition \ref{prop:mapping}, we have that
		\[
		\Cf[\Lni](S,T) \cong \P^n(S,T)
		\]
		where the latter simplicial set is the nerve of the poset $P^n(S,T)$ with minimal element $S
		< T$ and hence contractible.
		
		\item\label{c:3}{\em There exists an $i$-superior subset $J$ such that $A(J)$ contains
			$S$ and $T$, but there does not exist an $i$-superior subset satisfying the
			hypothesis of \ref{c:2}.}
		
		This is the most subtle case, since both chains between $S$ and $T$ of length
		$\ge 2$ which may not be contained completely in any $A(J)$ as well as chains
		between $S$ and $T$ of length $1$ which are contained in some $A(J)$ contribute to
		$\Cf[\Lni](S,T)$. In a sense, the argument in this case will be a hybrid of the
		arguments for the extreme cases \ref{c:1} and \ref{c:2}.
		
		Denote by $\U_i(S,T)$ the poset whose elements are (possibly empty) sets
		\[
		\underline{J} = \{J_1, \cdots, J_k\}
		\]
		where each $J_r$ is an $i$-superior subset of $[n]$ such that $A(J)$ contains $S$
		and $T$. Define the poset
		\[
		\V_i(S,T) := (\U_i(S,T) \times \{1 < 2\}) \setminus \{(\emptyset, 1)\}
		\]
		and further a functor
		\[
		p: (\V_i(S,T))^{\op} \lra \Cat,\quad  (\underline{J},m) \mapsto \begin{cases}
		P^n_{A(\underline{J})}(S,T) & \text{for $m=1$,}\\
		P^n_{A(\underline{J})}(S,T)_{\ge 2} & \text{for $m=2$,} \end{cases}
		\]
		where 
		\[
		P^n_{A(\underline{J})}(S,T) \subset P^n(S,T)
		\]
		denotes the subposet consisting of those chains between $S$ and $T$ which are
		contained in every poset $A(J)$, $J \in \underline{J}$. Denote by $\chi$ the
		Grothendieck construction of $p$, as defined in the introduction. The category
		$\chi$ is again a poset, an element of $\chi$ is given by a triple
		\[
		(\underline{J},l, \{S < M_1 < \cdots < M_k < T\})
		\]
		consisting of a set $\underline{J} \in \U_i(S,T)$ of $i$-superior subsets, a number
		$l \in \{1,2\}$, and a chain between $S$ and $T$ of length $\ge l$ lying in all
		posets $A(J)$, $J \in \underline{J}$.  
		
		The poset $\V_i(S,T)$ has a maximal element $(\underline{J},2)$ where
		$\underline{J}$ is the set of all $i$-superior subsets $J$ such that $A(J)$ contains
		$S$ and $T$. Hence it is contractible.
		By Lemma \ref{lem:distant} and Lemma \ref{lem:admissible}, for every $\underline{J}
		\in \U_i(S,T)$, the poset $P^n_{A(\underline{J})}(S,T)_{\ge 2}$ is contractible. The
		posets $P^n_{A(\underline{J})}(S,T)$ are contractible since they have a minimal
		element given by the chain $S < T$ of length $1$.
		Therefore, by Lemma \ref{lem:colimit}, the poset $\chi$ is contractible.
		
		To conclude, define a functor $q: \chi^{\op} \to \sSet$ by assigning to the element 
		\[
		(\underline{J}, l, \{S < M_1 < \cdots < M_k < T\})
		\]
		the simplicial set
		\[
		\Cf[\A(\underline{J})](S,M_1) \times \cdots \times \Cf[\A(\underline{J})](M_k,T)
		\]
		where 
		\[
		\A(\underline{J}) = \begin{cases} 
		\Lni & \text{if $\underline{J} = \emptyset$,}\\
		\cap_{J \in \underline{J}} \A(J) & \text{else.}
		\end{cases}
		\]
		Note that the values of $q$ are contractible simplicial sets: for $\underline{J} \ne
		\emptyset$ this follows, since $\A(\underline{J})$ is the nerve of the poset
		$\cap_{J \in \underline{J}} A(J)$; for $\underline{J} = \emptyset$, we have $l = 2$,
		so that the contractibility of the involved mapping spaces follows from the
		induction hypothesis.
		By Remark \ref{rem:cofibrant}, the diagram $q$ is projectively cofibrant and we have, by construction,
		\[
		\colim q \cong \Cf[\Lni](S,T).
		\]
		Therefore, by Lemma \ref{lem:cofibrant}, this latter colimit is a homotopy colimit of contractible simplicial sets
		parametrized by the contractible poset $\chi$. Consequently, $\Cf[\Lni](S,T)$ is
		contractible. \qedhere
	\end{enumerate}
\end{proof}

\begin{lemma}\label{lem:close} Suppose that $S \le T$ in $D^n$ are close. Then there exists
	an $i$-admissible subset $J \subset [n]$ such that $A(J)$ contains every $M \in D^n$ with $S
	\le M \le T$. 
\end{lemma}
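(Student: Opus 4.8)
The plan is to reduce the statement to an explicit combinatorial description of the sets $A(J)$ and then to construct a suitable $J$ directly from the interval $[S,T] := \{M \in D^n : S \le M \le T\}$. First I would unwind the pullback functor $\rho^n_J$ of \eqref{eq:ref} to record the identity
\[
A(J) = \{ M \in D^n : \min(J) \in M \text{ and } M \cap [\min(J), n] \subseteq J \}.
\]
This follows because an element of $A(J)$ is a union $S_1 \cup S_2$ with $S_1 \in \OO^{[n]}(0,\min(J))^{\op}$ supported on $[0,\min(J)]$ and $S_2 \in D^J$ supported on $J$ with $\min(S_2) = \min(J)$; one checks $M \cap [\min(J), n] = S_2 \subseteq J$ and $\min(J) \in M$, while the part of $M$ strictly below $\min(J)$ is unconstrained apart from containing $0$.

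Given this, I would set $W := \bigcap_{M \in [S,T]} M$ and $U := \bigcup_{M \in [S,T]} M$, let $m := \max(W)$, and define the candidate
\[
J := \{m\} \cup (U \cap (m, n]).
\]
Since $U \cap (m,n] \subseteq (m,n]$, we have $\min(J) = m$, and the containment $[S,T] \subseteq A(J)$ is then purely formal: for any $M \in [S,T]$ we have $m \in W \subseteq M$, and $M \cap [m,n] \subseteq \{m\} \cup (U \cap (m,n]) = J$ because $M \subseteq U$. Note this step uses neither the closeness hypothesis nor the bound $0<i<n$.

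The real content is to show that $J$ is $i$-admissible, i.e.\ that $J$ omits some vertex $k \ne i$ (equivalently $\Delta^J \subseteq \Lambda^n_i$). If $m = \max(W) \ge 1$, then every element of $J$ is $\ge m \ge 1$, so $J$ omits $0$, and $0 \ne i$ since $0 < i$; this case is immediate. The main obstacle is the remaining case $m = 0$, that is $W = \{0\}$, in which $J = U$. Here I would instead show that $J$ omits the vertex $n \ne i$, for which it suffices to prove $\max(T) < n$. This is exactly where closeness must enter, and the point is delicate because the coordinate $x_{n-1}$ governing closeness is not simply $\max$: one has $x_{n-1}(M) = \max(M) + [\,\max(M) = n \text{ and } n-1 \notin M\,]$, and the correction term is what makes the $\max(M)=n$ boundary subtle.

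To prove the implication $W = \{0\} \Rightarrow \max(T) < n$, I would argue the contrapositive: if $\max(T) = n$ then $\max(W) \ge 1$. Since $x_{n-1}(M) \ge \max(M)$, closeness gives $\max(T) - \max(S) \le 1$, so $\max(S) \in \{n-1,n\}$. If $\max(S) = n$, then Proposition \ref{prop:GIposet} forces $T \subseteq S$ and squeezes every $M \in [S,T]$ to have $\max(M) = n$, whence $n \in W$. If $\max(S) = n-1$, then the explicit form of $x_{n-1}$ converts closeness into the condition $n-1 \in T$; combining this with $M \le T$ shows $n-1 \in M$ for every $M \in [S,T]$ (clear when $\max(M)=n-1$, and forced by $T \setminus \{n\} \subseteq M$ when $\max(M)=n$), so $n-1 \in W$. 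In either case $\max(W) \ge n-1 \ge 1$, contradicting $m = 0$. Hence when $m=0$ we have $\max(T) < n$, so no $M \in [S,T]$ contains $n$, giving $n \notin U = J$ and therefore $i$-admissibility.
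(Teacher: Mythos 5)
Your proof is correct, but it takes a genuinely different route from the paper's. The paper argues geometrically: every rectilinear unit cube in $x(D^n)$ is contained in one of $x(A([n]\setminus\{n\}))$, $x(A(\{n-1,n\}))$, $x(A(\{n\}))$, and since close $S\le T$ (and hence every intermediate $M$) lie on a common unit cube, $J$ can always be chosen from this fixed list of three $i$-superior sets. You instead extract $J$ directly from the interval via $W=\bigcap M$ and $U=\bigcup M$, using the explicit description $A(J)=\{M: \min(J)\in M,\ M\cap[\min(J),n]\subseteq J\}$ (which you verify correctly), so that $[S,T]\subseteq A(J)$ is automatic and the whole content is pushed into $i$-admissibility of $J$; closeness then enters only in the single boundary case $W=\{0\}$, $\max(T)=n$, which you rule out. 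What each buys: the paper's argument is shorter and consistent with the cube picture used elsewhere in the section, but the covering claim is left as ``geometrically clear''; yours is fully explicit, self-contained, and pinpoints exactly where the hypothesis is used, at the cost of a longer case analysis and a $J$ that is merely $i$-admissible rather than drawn from a universal list. One small point to tighten: the inference ``$x_{n-1}(M)\ge\max(M)$, so closeness gives $\max(T)-\max(S)\le 1$'' also needs the observation that $x_{n-1}(S)=\max(S)$ whenever $\max(S)<n$ (the correction term appears only at $\max=n$, where the inequality is trivial anyway); the conclusion $\max(S)\in\{n-1,n\}$ and the two ensuing sub-cases are handled correctly.
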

\begin{proof} 
	This is geometrically clear from the embedding $x: D^n \hra \ZZ^{n-1}$: Every rectilinear
	unit $k$-cube in $x(D^n)$, where $1 \le k \le n-1$, is contained in one of the following
	subsets
	\begin{enumerate}
		\item $x(A([n] \setminus \{n\}))$,
		\item $x(A(\{n-1,n\}))$,
		\item $x(A(\{n\}))$.
	\end{enumerate}
	If $x(S)$ and $x(T)$ are vertices of the same rectilinear unit cube, then the same must be true
	for $x(M)$ where $S < M < T$. Therefore, we may take $J$ to be one of the subsets $[n]
	\setminus \{n\}$, $\{n-1,n\}$, and $\{n\}$, respectively.
\end{proof}

\begin{lemma}\label{lem:distant}
	Let $S \le T$ be distant elements of $D^n$. Then the subposet
	\[
	P^n(S,T)_{\ge 2} \subset P^n(S,T),
	\]
	consisting of chains of length $\ge 2$, is contractible.
\end{lemma}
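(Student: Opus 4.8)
The plan is to first reduce the statement to the contractibility of the open interval
$D^n_{(S,T)} := \{M \in D^n : S < M < T\}$, and then to contract that interval by an explicit
closure operator built from the lattice structure of $D^n$. For the reduction, sending a chain
$S = M_0 < \cdots < M_\ell = T$ of length $\ell \ge 2$ to its set of interior vertices
$\{M_1,\dots,M_{\ell-1}\}$ identifies $P^n(S,T)_{\ge 2}$ with the poset of nonempty chains of
$D^n_{(S,T)}$, ordered by inclusion. This poset is the face poset of the order complex
$\N(D^n_{(S,T)})$, so its nerve is the barycentric subdivision of $\N(D^n_{(S,T)})$ and is
therefore homotopy equivalent to it. Hence it suffices to prove that $D^n_{(S,T)}$ is contractible
(in particular nonempty).

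Next I would exploit the lattice structure of $D^n$. Writing each $J \in D^n$ as the pair
consisting of $\max(J)$ together with its gap set $[0,\max(J)]\setminus J$, the order of
Proposition \ref{prop:GIposet} becomes: $J \le J'$ if and only if $\max(J)\le\max(J')$ and the gap
set of $J$ is contained in that of $J'$. Thus $D^n$ is a lattice whose join is (maximum of the
maxima, union of the gap sets), and a short check shows this coincides under $x$ with the
coordinatewise maximum in $\ZZ^{n-1}$, so $x$ realizes $D^n$ as a sublattice of $\ZZ^{n-1}$. Because
$S$ and $T$ are distant we have $x_{n-1}(S) \le x_{n-1}(T)-2$; since always $\max(J) \le x_{n-1}(J)
\le \max(J)+1$, this gives $\max(S) \le x_{n-1}(S) \le x_{n-1}(T)-2 \le n-1$, so $\max(S) < n$.
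Consequently $S^+ := S \cup \{\max(S)+1\}$ is a well-defined element of $D^n$ with $x(S^+) = x(S) +
e_{n-1}$, and one checks directly that $S < S^+ < T$, so $S^+ \in D^n_{(S,T)}$.

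Finally I would contract $D^n_{(S,T)}$ using $c(M) := M \vee S^+$, the join in $D^n$, i.e. the
coordinatewise maximum in $\ZZ^{n-1}$. It is monotone, idempotent, and satisfies $c(M) \ge M$;
moreover $c(M) \ge S^+ > S$ and $c(M) \le T$ (as $M \le T$ and $S^+ \le T$). The one point needing
genuine argument — and the main obstacle — is that $c$ lands in the \emph{open} interval, i.e.
$c(M)\ne T$. This is exactly where distance enters: since $x_{n-1}(S^+) = x_{n-1}(S)+1 <
x_{n-1}(T)$, the maximum $M\vee S^+$ can agree with $T$ in the last coordinate only if $x_{n-1}(M) =
x_{n-1}(T)$; but then $M \ne T$ forces $x_j(M) = 0 < 1 = x_j(T)$ for some $j < n-1$, and since
$x_j(S^+) = x_j(S) \le x_j(M) = 0$ the $j$-th coordinate of $M\vee S^+$ is still $0$, whence
$M\vee S^+ \ne T$. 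Granting this, $c$ is a closure operator on $D^n_{(S,T)}$; its image
$\{N \in D^n_{(S,T)} : N \ge S^+\}$ has least element $S^+$ and is thus contractible, and a closure
operator deformation-retracts $\N(D^n_{(S,T)})$ onto the nerve of its image. Hence $D^n_{(S,T)}$ is
contractible, and by the first paragraph so is $P^n(S,T)_{\ge 2}$. For close $S,T$ the height of
$S^+$ can reach that of $T$ and the above separation fails, consistent with the distance hypothesis
being genuinely necessary.
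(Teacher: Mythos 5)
Your proof is correct and takes essentially the same route as the paper's: the same reduction of $P^n(S,T)_{\ge 2}$ to the open interval $\{M : S<M<T\}$ via its poset of non-degenerate simplices, followed by a retraction onto the up-set of the element with coordinates $x(S)+e_{n-1}$, which has a least element. Your closure operator $M \mapsto M \vee S^+$ agrees pointwise with the paper's retraction $r$, the difference being only that you spell out the well-definedness checks (where distance is actually used) that the paper leaves implicit.
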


\begin{proof}
	Let $G(S,T)\subset D^n$ denote the full subposet on those $V$ such that $S<V<T$. By definition, $P^n(S,T)_{\ge 2}$ is the category of non-degenerate simplices of $G(S,T)$, and thus has the same homotopy type as $G(S,T)$. It will therefore suffice to show that $G(S,T)$ is contractible. 
	
	Denote by $G(S,T)^>$ the full subposet of $G(S,T)$ on those objects $V$ for which $x_{n-1}(V)>x_{n-1}(S)$. We then make two observations: 
	\begin{enumerate}
		\item $G(S,T)^>$ has an initial element, given by the preimage of $x(S)+e_{n-1}$, and is thus contractible.  
		\item The morphism $r:G(S,T)\to G(S,T)^>$ which maps 
		\[
		V\mapsto \begin{cases}
		x(V)+e_{n-1} & x_{n-1}(V)=x_{n-1}(S)\\
		x(V) & \text{else}
		\end{cases}
		\]
		is well-defined (since $S$ and $T$ are distant) and defines a homotopy inverse to the inclusion $G(S,T)^>\hookrightarrow G(S,T)$. Consequently, $G(S,T)$ is contractible. \qedhere
	\end{enumerate} 
\end{proof}

\begin{lemma}\label{lem:admissible}
	Let $S \le T$ elements of $D^n$ satisfying the hypothesis of \ref{c:3} in the proof of
	Theorem \ref{thm:contractible}. Let $\underline{J} \in \U_i(S,T)$ be a set of $i$-superior
	subsets. Then the subposet 
	\[
	P^n_{A(\underline{J})}(S,T)_{\ge 2} \subset P^n(S,T)_{\ge 2},
	\]
	consisting of those chains which are contained in every $A(J)$, $J \in \underline{J}$, is contractible.
\end{lemma}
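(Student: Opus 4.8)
The plan is to identify the subposet $A(\underline{J})\subseteq D^n$ with a smaller poset of the \emph{same} type, namely some $D^{I}$, and then read off contractibility directly from Lemma \ref{lem:distant} applied to $D^{I}$. The first task is to pin down the shape of $\underline{J}$ using the case \ref{c:3} hypothesis. I would record first the two identities for the $i$-superior subsets of Remark \ref{rem:superior}, obtained by unwinding \eqref{eq:ref},
\[
A([n]\setminus\{j\})=\{V\in D^n:\ j\notin V\},\qquad A(\{k,\dots,n\})=\{V\in D^n:\ k\in V\}.
\]
Next I would observe that in case \ref{c:3} one has $S\cap T=\{0\}$: no $c\in S\cap T$ with $c\ge 1$ can be omitted from any $M$ with $S\le M\le T$, since $M\le T$ forces $c\in[\max M,\max T]$, whence $\max M\le c\le \max S\le\max M$ and so $c=\max M\in M$; thus $S\cap T\subseteq\bigcap_{S\le M\le T}M$, and the latter equals $\{0\}$ because case \ref{c:2} fails for type-$(2)$ subsets. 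Consequently every subset in $\U_i(S,T)$ is of type $(1)$, so $A(\underline{J})=\{V\in D^n:\ V\cap F=\emptyset\}$ for a set $F\subseteq\{1,\dots,n-1\}\setminus(S\cup T\cup\{i\})$ of forbidden elements; in particular $i\notin F$. Finally, the failure of case \ref{c:2} for type-$(1)$ subsets gives $\bigcup_{S\le M\le T}M\supseteq[n]\setminus\{i\}$, and since $i<n$ this forces $n\in M$ for some such $M$, i.e.\ $\max T=n$.

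Setting $I:=[n]\setminus F$, the identities above give $A(\underline{J})=\bigcap_{j\in F}A([n]\setminus\{j\})=\{V\subseteq I:\min V=0\}$, and since any $T'\subseteq I$ satisfies $T'\subseteq S'\cup[\max S',\max T']$ in $[n]$ if and only if it does so in $I$, the order on $A(\underline{J})$ inherited from $D^n$ agrees with the intrinsic order of $D^{I}$. Hence $A(\underline{J})\cong D^{I}$ as posets, and this order-isomorphism carries $S,T$ to elements of $D^{I}$ and induces an isomorphism between $P^n_{A(\underline{J})}(S,T)_{\ge 2}$ and the corresponding poset of chains in $D^{I}\cong D^{m}$, where $m=|I|-1$. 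It therefore suffices to check that $S$ and $T$, viewed in $D^{I}$, are \emph{distant}, and to invoke Lemma \ref{lem:distant} (whose proof is intrinsic to a poset of the form $D^m$) for $D^{I}$.

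The delicate point, which I expect to be the only genuine obstacle, is precisely this distance estimate: deleting the elements of $F$ compresses the top coordinate and could a priori reduce the separation to $1$. Let $a_0<\dots<a_m$ enumerate $I$ (so $a_m=n$ and $m\ge 2$, as $0,i,n\in I$), and let $x^{I}$ be the embedding of $D^{I}$ into $\ZZ^{m}$ analogous to $x$; its top coordinate is $x^{I}_{m}(V)=\mathrm{rk}_{I}(\max V)$, augmented by $1$ exactly when $\max V=n$ and $a_{m-1}\notin V$. Since $\max S<\max T=n$, this yields
\[
x^{I}_{m}(T)-x^{I}_{m}(S)=\lvert I\cap(\max S,n]\rvert+\varepsilon,
\]
where $\varepsilon=1$ if $a_{m-1}\notin T$ and $\varepsilon=0$ otherwise. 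If the cardinality is at least $2$ we are done; the only borderline case is $I\cap(\max S,n]=\{n\}$, where $a_{m-1}=\max S\ge 1$, and then $S\cap T=\{0\}$ gives $\max S\notin T$, so $\varepsilon=1$ and the difference is again $2$. (The possibility $\max S=0$ is excluded here, as it would force $\lvert I\cap(0,n]\rvert=m\ge 2$.) Thus $S$ and $T$ are distant in $D^{I}\cong D^{m}$, and Lemma \ref{lem:distant} gives the contractibility of $P^n_{A(\underline{J})}(S,T)_{\ge 2}$.
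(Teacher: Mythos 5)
Your proof is correct, and it takes a genuinely different route from the paper's. The paper likewise first reduces to the contractibility of the poset $G(S,T)_{\underline{J}}$ of those $V$ with $S<V<T$ lying in every $A(J)$, and records the same structural facts you derive from the failure of case \ref{c:2} (every $J\in\underline{J}$ has the form $[n]\setminus\{j\}$, $\max(T)=n$, $S\cap T=\{0\}$); but from there it argues by hand, splitting into the cases $S=\{0\}$ and $S\neq\{0\}$ and in each exhibiting an explicit retraction of $G(S,T)_{\underline{J}}$ onto a subposet with a final element ($\{0,k\}$ for the least admissible $k$, resp.\ $T\cup\{\max S\}$). Your key observation --- that $\bigcap_{j\in F}A([n]\setminus\{j\})$ is order-isomorphic to $D^{I}$ for $I=[n]\setminus F$, so that the lemma reduces to Lemma \ref{lem:distant} applied to $D^{I}$ --- does not appear in the paper and is arguably more systematic: it reuses the already-established ``distant'' machinery instead of building new deformation retractions, at the price of the distance bookkeeping, which you carry out correctly (the borderline case $\lvert I\cap(\max S,n]\rvert=1$ is indeed rescued by $S\cap T=\{0\}$, and $m\ge 2$ since $0,i,n\in I$). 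The supporting claims --- the formulas for $A([n]\setminus\{j\})$ and $A(\{k,\dots,n\})$, the exclusion of type-$(2)$ subsets, and the agreement of the inherited order on $A(\underline{J})$ with the intrinsic order of $D^{I}$ --- all check out. Two cosmetic remarks: in the paper's convention $D^{I}\cong D^{m}$ embeds into $\ZZ^{m-1}$, so your top coordinate should be indexed $m-1$ rather than $m$ (the formula you use for it is nevertheless the right one); and your description of $F$ could be sharpened to $j>\max(S)$ as in the paper, though the weaker containment $F\subseteq\{1,\dots,n-1\}\setminus(S\cup T\cup\{i\})$ is all your argument needs.
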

\begin{proof}
	First note, that the poset $P^n_{A(\underline{J})}(S,T)_{\ge 2}$ can be identified
	with the category of simplices of the poset $G(S,T)_{\underline{J}}$ consisting of
	those objects $V$ such that $S < V < T$ and such that, for every $J \in
	\underline{J}$, we have $V \in J$. In particular, it suffices to show that the
	posets $G(S,T)_{\underline{J}}$ are contractible.
	
	The $i$-superior subsets $J \in \underline{J}$ must be of the form
	$[n]\setminus{j}$ with $j > \max(S)$ and $j \neq n,i$. Indeed, all other $i$-superior sets
	would contradict the hypothesis of \ref{c:3}. In particular,
	this implies that $\max(T)=n$ and that $S \cap T=\{0\}$. To simplify notation, we
	will identify $\underline{J}=\{[n]\setminus{j_\ell}\}_{\ell=1}^{r}$ with the set
	$X:=\{j_1,j_2,\dots,j_r\}$ and distinguish the following two cases:
	
	\begin{enumerate}
		\item Suppose $S = \{ 0 \}$. Let $k \in [n]\setminus \{0\}$ be the smallest
		element such that $k \notin X$. We observe that $ \{0,k\} \in
		G(S,T)_{\underline{J}}$ and define $G(S,T)_{\underline{J}}^{\leq k}$
		to be the full subposet on those objects $V$ such that $V \leq
		\{0,k \}$. The inclusion functor
		\[
		i_k: G(S,T)_{\underline{J}}^{\leq k} \subset G(S,T)_{\underline{J}},
		\]
		admits a section
		\[
		r_k: G(S,T)_{\underline{J}} \lra G(S,T)_{\underline{J}}^{\leq k},\; V \mapsto \begin{cases}
		V, & \text{if $\max(V) \leq k$,} \\
		(V \cap [k]) \cup \{k\} & \text{otherwise.}
		\end{cases}
		\]
		Furthermore, we have $i_k \circ r_k(V) \leq V$ thus inducing a
		natural transformation between $i_k \circ r_k$ and $\id$. This shows
		that both posets have the same homotopy type. Since
		$G(S,T)_{\underline{J}}^{\leq k}$ has a final element $\{0,k\}$, we see
		that $|G(S,T)_{\underline{J}}|$ is contractible.
		
		\item Suppose $S \neq \{0\}$ and set $s = \max(S)$. We define
		$G(S,T)_{\underline{J}}^{s} \subset G(S,T)_{\underline{J}}$
		consisting of those sets $V$ which contain $s$.
		This poset has a final element given by $T \cup \{ s \}$. As
		above, the inclusion $i_s: G(S,T)_{\underline{J}}^{s} \subset
		G(S,T)_{\underline{J}}$ admits a section
		\[
		r_s: G(S,T)_{\underline{J}} \lra
		G(S,T)^{s}_{\underline{J}},\; V \mapsto \begin{cases}
		V & \text{if $s \in V$,} \\
		V \cup \{s\} & \text{otherwise.}
		\end{cases}
		\]
		which is further a homotopy inverse. As above, this implies the
		contractibility of $|G(S,T)_{\underline{J}}|$ concluding the proof.\qedhere
	\end{enumerate}
\end{proof}

For $\C$ a small category and $F:\C\to \sSet$ a diagram, we call $F$ \emph{projectively
	cofibrant} if the natural transformation $\emptyset\to F$ from the constant diagram to $F$
has the left lifting property with respect to all pointwise trivial fibrations in the
Kan-Quillen model structure. 

\begin{remark}\label{rem:cofibrant}
	The projectively cofibrant functors are of use to us precisely because, for a projectively cofibrant functor $F:\C\to \sSet$, the canonical morphism 
	\[
	\hocolim_{\C} F \to \colim_{\C} F
	\]
	is a weak equivalence. cf. \cite[A.2.8]{LurieHTT}. 
\end{remark}

\begin{lemma}\label{lem:cofibrant}
	Let $X$ be a simplicial set, $\{U_i\}_{i\in I}$ a finite cover of $X$ by non-empty
	simplicial subsets, and $P\subset \P(I)\setminus \emptyset$ a full sub-poset
	containing all $J \subset I$ such that $\bigcap_{j\in J}U_j\subset X$ is non-empty.
	Then the diagram
	\[
	F:P^{\op} \lra \Set_\Delta,\; J \mapsto \bigcap_{j\in J} U_j
	\]
	is projectively cofibrant. 
\end{lemma}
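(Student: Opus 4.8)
The plan is to exhibit $F$ as a Reedy-cofibrant diagram over a finite \emph{direct} category, for which projective cofibrancy is detected by latching maps. First I would equip $P^{\op}$ with the degree function $\deg(J) = |I| - |J|$: a non-identity arrow $J' \to J$ of $P^{\op}$ corresponds to a strict inclusion $J \subsetneq J'$ in $P$, and since $|J| < |J'|$ it strictly increases $\deg$, so $P^{\op}$ is a finite direct category. Over such a category the projective and Reedy model structures on $(\sSet)^{P^{\op}}$ coincide, and a diagram is projectively cofibrant precisely when each latching map $L_J F \to F(J)$ is a monomorphism (a standard fact for diagrams indexed by direct categories; cf.\ \cite[\S A.2]{LurieHTT}). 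Since every transition map of $F$ is an inclusion of subcomplexes of $X$, the whole problem reduces to checking that these latching maps are monos.

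Next I would identify the latching object. The latching category at $J$ is the poset $\{J' \in P : J \subsetneq J'\}$, and $L_J F$ is the colimit of the subcomplexes $F(J') = \bigcap_{j \in J'} U_j \subseteq F(J)$ over this poset. Any colimit of subcomplexes of a fixed simplicial set, formed with inclusions as transition maps, maps to that simplicial set with image the union $\bigcup_{J \subsetneq J'} F(J')$; and this comparison map is a monomorphism if and only if, for every simplex $\sigma$ of the union, the full subposet of indices $J'$ with $\sigma \in F(J')$ is connected (colimits in $\sSet$ being computed levelwise in $\Set$). So it suffices to prove this connectivity.

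The key step, and the one place where the hypothesis on $P$ enters, is the following. Fix a simplex $\sigma$ and set $I_\sigma = \{i \in I : \sigma \in U_i\}$; then $\sigma \in F(J') \iff J' \subseteq I_\sigma$, so the relevant index poset is $R_\sigma = \{J' \in P : J \subsetneq J' \subseteq I_\sigma\}$. Because $\sigma \in \bigcap_{i \in I_\sigma} U_i$, this intersection is nonempty, so by the standing hypothesis $I_\sigma \in P$; likewise every $J'$ with $J \subsetneq J' \subseteq I_\sigma$ has nonempty intersection and hence lies in $P$. Thus $I_\sigma$ is a maximal element of $R_\sigma$, so $R_\sigma$ is connected (indeed contractible). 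This yields the required monomorphism, and the lemma follows.

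The main obstacle is exactly this last connectivity statement: without the assumption that $P$ contains every $J$ with $\bigcap_{j\in J} U_j \neq \emptyset$, the set $I_\sigma$ could fail to lie in $P$, the poset $R_\sigma$ could lack a maximum and be disconnected, and the latching map could fail to be injective. Everything else is bookkeeping within the direct-category (Reedy) formalism.
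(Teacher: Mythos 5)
Your proof is correct and follows essentially the same route as the paper's: reduce projective cofibrancy over the finite direct poset $P^{\op}$ to the latching maps being cofibrations, then identify the latching object $\colim_{P^{\op}_{/J}\setminus\{J\}}F$ with the union $\bigcup_{K\supsetneq J}\bigcap_{k\in K}U_k\subset F(J)$. You merely spell out the ``quick calculation'' the paper leaves implicit, correctly isolating the connectivity of the index poset $R_\sigma$ (via the hypothesis that $P$ contains every $J$ with nonempty intersection) as the point where colimit and union agree.
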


\begin{proof}
	Since the source of $F$ is a finite poset and every simplicial set is cofibrant, an argument
	similar to that of \cite[Prop. A.2.9.19]{LurieHTT} shows that it is sufficient to check that
	$\colim_{P^{\op}_{/J} \setminus \{J\}} F \to F(J)$ is a cofibration. A quick calculation shows that 
	\[
		\colim_{P^{\op}_{/J} \setminus \{J\}} F \cong \bigcup_{K\supsetneq J} \left(\bigcap_{k\in K} U_k\right)\subset F(J),
	\]
	completing the proof. 
\end{proof}

\begin{lemma}\label{lem:colimit} 
	Let $P$ be a contractible poset, and let $F:P^{\op}\to \Cat$ be a diagram such that $F(p)$ is contractible for all $p\in P$. Then the Grothendieck construction $\chi(F)$ is contractible.  
\end{lemma}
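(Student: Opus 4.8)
The plan is to prove Lemma \ref{lem:colimit} via Thomason's theorem, which identifies the homotopy type of the Grothendieck construction (equivalently, the homotopy colimit over $P^{\op}$ of the nerves of the fiber categories) with the nerve of the total category. Concretely, recall that for a diagram $F : P^{\op} \to \Cat$, the nerve of the Grothendieck construction $\chi(F)$ is naturally weakly equivalent to the homotopy colimit
\[
	\hocolim_{p \in P^{\op}} \N(F(p)).
\]
This is the content of Thomason's homotopy colimit theorem, and it reduces the claim to a statement about homotopy colimits of contractible diagrams.

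First I would invoke this equivalence to replace $\N(\chi(F))$ with $\hocolim_{P^{\op}} \N F$. The key input is then the standard fact that a homotopy colimit of a diagram all of whose values are (weakly) contractible is equivalent to the homotopy colimit of the terminal diagram, i.e.\ to the classifying space of the indexing category. Since every $F(p)$ is contractible by hypothesis, the unique natural transformation from $\N \circ F$ to the constant diagram at the point is an objectwise weak equivalence, and homotopy colimits preserve objectwise weak equivalences. Hence
\[
	\hocolim_{P^{\op}} \N F \;\simeq\; \hocolim_{P^{\op}} \ast \;\simeq\; \N(P^{\op}).
\]
Finally, the nerve of $P^{\op}$ is homeomorphic (after geometric realization) to that of $P$, which is contractible by assumption on $P$; stringing these equivalences together yields that $\chi(F)$ is contractible.

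An alternative, more self-contained route would avoid citing Thomason directly and instead use the projection functor $\pi : \chi(F) \to P^{\op}$. Since each fiber $\pi^{-1}(p) = F(p)$ is contractible and the fibration is (op)Cartesian, one could appeal to Quillen's Theorem A (or its dual, Theorem B in the contractible-fibers form): it suffices to check that each comma category $\pi / p$, equivalently the relevant slice, is contractible, which follows from the contractibility of the fibers together with the structure of the Grothendieck construction. Either formulation reduces the lemma to the two hypotheses, contractibility of $P$ and of each $F(p)$.

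The main obstacle is bookkeeping with the variance: $F$ is a functor on $P^{\op}$ valued in $\Cat$, so one must be careful whether Thomason's theorem as quoted produces a homotopy colimit over $P^{\op}$ or over $P$, and whether the relevant comma categories for Theorem A are the over- or under-categories. Since the nerve of $P^{\op}$ and of $P$ have the same (contractible) homotopy type, this variance does not affect the conclusion, but it must be handled cleanly to produce a correct citation. Given the elementary nature of the statement and the availability of Thomason's theorem, I expect the proof to be short; the only genuine content is assembling the two standard facts (homotopy invariance of homotopy colimits and Thomason's identification) rather than any novel computation.
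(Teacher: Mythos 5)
Your proposal is correct and matches the paper's treatment: the paper likewise opens by citing the Thomason-type identification of the Grothendieck construction with a homotopy colimit, and then gives exactly your alternative route as the "direct argument," applying Quillen's Theorem A to the projection $\pi:\chi(F)\to P$ after exhibiting an explicit deformation retraction of the comma categories $\chi(F)_{p/}$ onto the fibers $F(p)$. The only step you leave slightly informal --- that the comma categories are contractible "follows from the contractibility of the fibers together with the structure of the Grothendieck construction" --- is precisely the point the paper spells out with the inclusion $i$, the retraction $r=F(f_q)(-)$, and the natural transformation $i\circ r\to \id$.
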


\begin{proof} This follows from the well-known fact that the geometric realization of the Grothendieck
	construction computes the homotopy colimit of the geometric realizations of the various categories of
	the diagrams (as can, for example, be deduced from \cite[3.3.4.3]{LurieHTT}). A direct
	argument adapted to the given situation is the following:
	We consider the resulting Cartesian fibration $\pi:\chi(F)\to P.$ For $p\in P$, we
	relate the fiber $F(p)$ to the overcategory $\chi(F)_{p/}$. Clearly there is an inclusion 
	\[
	i:F(p) \lra \chi(F)_{p/},\; c \mapsto (p,c)
	\]
	Setting $f_q$ to be the unique morphism in $P$ from $p$ to $q$, we can also define a functor 
	\[
	r: \chi(F)_{p/} \lra F(p),\; (q,c) \mapsto F(f_q)(c)
	\]
	It is immediate that $r\circ i=\id_{F(p)}$. 
	
	We then note that the canonical morphisms $(p,F(f_q)(c))\to (q,c)$ given by
	$(f_q,\id_{F(f_q)(c)})$ form a natural transformation $i\circ r\to
	\id_{\chi(F)_{p/}}$. Consequently, $i$ is a homotopy equivalence, and so by
	assumption the overcategories $\chi(F)_{p/}$ are all contractible. By Quillen's
	Theorem A, $\pi$ induces a homotopy equivalence of nerves, and so since $P$ is
	contractible, $\chi(F)$ is contractible.  
\end{proof}	

\section{The Quillen equivalence}\label{sec:qe}

The goal of this section will be to show that the adjunction 
\[
	\PPhi_{\CC}: (\msSet)_{/\Nsc(\CC)} \llra \Fun_{\msSet}(\CC^{(\op,\op)}, \msSet): \cchi_{\CC},
\]
from \eqref{eq:adjunction} is in fact a Quillen equivalence. The proof strategy precisely parallels
the argument of \cite{LurieHTT} to show that the ordinary relative nerve defines a Quillen
equivalence: we first relate the values of $\PPhi_{\CC}$ and the contravariant scaled straightening
functor $\St_{\CC}$ from Definition \ref{defn:CSt} on simplices over $\Nsc(\CC)$. We then show that
the comparison maps thus obtained glue to produce global natural comparison maps between the values
of $\PPhi_{\CC}$ and $\St_{\CC}$ on any marked simplicial set over $\Nsc(\CC)$. We show that this
natural transformation is an objectwise weak equivalence. Since $\PPhi_{\CC}$ preserves cofibrations
this comparison shows, by 2-out-of-3, that it also preserves trivial cofibrations and is thus a
Quillen adjunction. In addition, this comparison with Lurie's straightening functor shows that the
left derived functors of $\PPhi_{\CC}$ and $\St_{\CC}$ are equivalent, allowing us to conclude the
proof.

\subsection{Base change}

Given a $2$-category $\CC$, the contravariant scaled straightening functor $\St_{\CC}$ associates to a
marked simplicial set $X \in \msSet_{/\Nsc(\CC)}$ the $\msSet$-enriched functor
\[
\St_{\CC}(X): \CCop \lra \msSet
\]
whose value at $c \in \CC$ is the marked simplicial set
\[
\St_{\CC}(X)(c) = \Map_{\CC(X)}(c,v)^{op}
\]
of maps in the $\msSet$-enriched category
\begin{equation}\label{eq:simcat}
\CC(X) = \CC \coprod_{\Cfsc[\Nsc(\CC)]} \Cfsc\Big[\Nsc(\CC) \coprod_{(X \times \{0\})_{\flat}} X \times \Delta^1 \coprod_{(X
	\times \{1\})_{\flat}} \{*\}\Big].
\end{equation}

To relate the straightening functor $\St_{\CC}$ from Definition \ref{defn:CSt} to the left adjoint $\PPhi_{\CC}$ we begin by analyzing its base change
behaviour with respect to $2$-functors $\CC \to \DD$. 

\begin{proposition}
	Let $f: \CC \to \DD$ be a functor between 2-categories. Then the diagram
	\begin{equation}\label{eq:basechange}
	\begin{tikzcd}
	\Fun_{\msSet}(\DD^{(\op,\op)},\msSet)  \ar{r}{f^*}\ar{d}{\cchi_{\DD}} &
	\Fun_{\msSet}(\CC^{(\op,\op)},\msSet)\ar{d}{\cchi_{\CC}}\\
	(\msSet)_{/\Nsc(\DD)}  \ar{r}{\Nsc(f)^*} & (\msSet)_{/\Nsc(\CC)},
	\end{tikzcd}
	\end{equation}
	with horizontal morphisms are given by pullback along $f$ and $\Nsc(f)$, respectively,
	commutes up to natural isomorphism. 
\end{proposition}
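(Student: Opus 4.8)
The plan is to verify the claimed natural isomorphism by unraveling both composites along the two sides of the square and checking that they produce the same $\msSet$-enriched functor $\DD^{(\op,\op)} \to \msSet$ when applied to an object $G \in \Fun_{\msSet}(\DD^{(\op,\op)},\msSet)$. The cleanest route is to work simplex-by-simplex: since both $\cchi_{\CC}$ and $\cchi_{\DD}$ are defined by prescribing their $n$-simplices (Definition \ref{defi:rel2nerve}), and pullback of marked simplicial sets over a base is computed levelwise, it suffices to exhibit, naturally in $n$ and compatibly with the simplicial structure maps, a bijection between $n$-simplices of $\cchi_{\CC}(f^* G)$ and $n$-simplices of $\Nsc(f)^*\,\cchi_{\DD}(G)$.

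First I would spell out the right-hand composite. An $n$-simplex of $\Nsc(f)^*\,\cchi_{\DD}(G)$ is a pair consisting of an $n$-simplex $\sigma \colon \Delta^n \to \Nsc(\CC)$ together with an $n$-simplex of $\cchi_{\DD}(G)$ lying over $\Nsc(f)\circ\sigma$. By the definition of $\cchi_{\DD}$, this latter datum is the $2$-functor $\Nsc(f)\circ\sigma$ realized as a map $\OO^{[n]} \to \DD$ (which factors as $\OO^{[n]} \overset{\sigma}{\to} \CC \overset{f}{\to}\DD$), together with a compatible collection of marked maps $\theta_I \colon \N(D^I)^\flat \to G(f(\sigma(\min I)))$ satisfying the coherence squares of Definition \ref{defi:rel2nerve}, now phrased in terms of the mapping categories of $\DD$. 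Next I would spell out the left-hand composite: an $n$-simplex of $\cchi_{\CC}(f^*G)$ consists of the same $\sigma\colon \Delta^n \to \Nsc(\CC)$ together with a compatible family $\theta_I \colon \N(D^I)^\flat \to (f^*G)(\sigma(\min I)) = G(f(\sigma(\min I)))$ satisfying the analogous coherence squares, this time formulated using the mapping categories $\CC(\sigma(\min I),\sigma(\min J))$. The two families of $\theta_I$ manifestly land in the same marked simplicial sets, so the content is to match the coherence conditions.

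The comparison of coherence conditions is the step I expect to carry the real weight, though it is ultimately bookkeeping rather than a genuine obstacle. The point is that the commuting square in Definition \ref{defi:rel2nerve} for $f^*G$ over $\CC$ is obtained from the corresponding square for $G$ over $\DD$ by applying the $\msSet$-enriched functor $f$ to the bottom-left horizontal map: $f$ sends $\N(\CC(\sigma(\min I),\sigma(\min J))^\op)$ into $\N(\DD(f\sigma(\min I),f\sigma(\min J))^\op)$, and $\msSet$-enrichment of $f$ guarantees that the resulting action maps agree. Because the pullback functors $\rho_{J,I}$ depend only on the posets $D^I, D^J$ and the linear-order combinatorics of $[n]$ — not on the target $2$-category — the top rows of the two squares are literally identical, and the vertical maps match once one observes that $G(f(-))$ computed via $\CC$'s composition is, by functoriality of $f$, the same as $G$'s action via $\DD$'s composition. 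This yields the claimed bijection on $n$-simplices; it is manifestly compatible with the marking (which is determined by whether $\widetilde e$ is marked, a condition intrinsic to the functor values and hence unchanged) and with the structure maps, so it is an isomorphism of marked simplicial sets over $\Nsc(\CC)$. Finally, naturality of this isomorphism in $G$ is immediate since every step is induced by precomposition with $f$, completing the proof.
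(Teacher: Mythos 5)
Your proposal is correct and takes the same route as the paper, which simply declares the statement ``immediate from the definition of the relative $2$-nerve''; your simplex-by-simplex unwinding, including the observation that the $\rho_{J,I}$ and the posets $D^I$ are independent of the target $2$-category so that only the bottom rows of the coherence squares need comparing via the enriched functoriality of $f$, is exactly the verification being left implicit there. No gaps.
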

\begin{proof}
	This is immediate from the definition of the relative $2$-nerve.
\end{proof}

\begin{corollary}\label{cor:basechange}
	Let $f: \CC \to \DD$ be a functor between 2-categories. Then the diagram
	\[
	\begin{tikzcd}
	\Fun_{\msSet}(\DD^{(\op,\op)},\msSet) & \ar[swap]{l}{f_!}
	\Fun_{\msSet}(\CC^{(\op,\op)},\msSet)\\
	(\msSet)_{/\Nsc(\DD)} \ar{u}{\PPhi_{\DD}}   &
	\ar[swap]{l}{\Nsc(f)_!}(\msSet)_{/\Nsc(\CC)}\ar{u}{\PPhi_{\CC}} ,
	\end{tikzcd}
	\]
	obtained from \eqref{eq:basechange} by passing to left adjoints of the horizontal functors,
	commutes up to natural isomorphism. 
\end{corollary}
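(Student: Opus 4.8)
The plan is to deduce the statement purely formally, as an instance of the principle that passing to left adjoints turns a commuting square of right adjoints into a commuting square of the associated left adjoints (with every arrow reversed). Concretely, I would begin by recording the four relevant adjunctions: the restriction $f^*$ admits the left Kan extension $f_!$ as a left adjoint; the pullback $\Nsc(f)^*$ admits postcomposition $\Nsc(f)_!$ as a left adjoint; and the relative $2$-nerves $\cchi_{\CC}$, $\cchi_{\DD}$ admit $\PPhi_{\CC}$, $\PPhi_{\DD}$ as left adjoints by the adjunction \eqref{eq:adjunction}. Thus all four functors appearing in the square \eqref{eq:basechange} of the preceding Proposition are right adjoints.

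Next I would take the natural isomorphism furnished by that Proposition, which expresses the commutativity of \eqref{eq:basechange} as
\[
\cchi_{\CC} \circ f^* \;\cong\; \Nsc(f)^* \circ \cchi_{\DD}
\]
as functors $\Fun_{\msSet}(\DD^{(\op,\op)},\msSet) \to (\msSet)_{/\Nsc(\CC)}$. Both sides are composites of right adjoints and hence are themselves right adjoints. Computing the left adjoint of each composite, and using that the left adjoint of a composite $G \circ F$ is $L_F \circ L_G$ (composition of adjoints reverses order), I identify the left adjoint of the left-hand side with $f_! \circ \PPhi_{\CC}$ and the left adjoint of the right-hand side with $\PPhi_{\DD} \circ \Nsc(f)_!$.

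Finally, I would invoke the uniqueness of adjoints up to natural isomorphism: since the two composite right adjoints above are naturally isomorphic, so are their left adjoints. This yields
\[
f_! \circ \PPhi_{\CC} \;\cong\; \PPhi_{\DD} \circ \Nsc(f)_!,
\]
which is precisely the commutativity asserted by the square in the statement. There is no genuine obstacle here; the argument uses only uniqueness of adjoints rather than a Beck--Chevalley transformation, since we already have an honest isomorphism of the right-adjoint composites. The only points requiring (routine) verification are the existence of the horizontal left adjoints $f_!$ and $\Nsc(f)_!$---guaranteed by cocompleteness of the enriched functor category and by the fact that $\Nsc(f)_!$ is simply composition with $\Nsc(f)$---together with the bookkeeping of composition order, which fixes the orientation of the resulting square.
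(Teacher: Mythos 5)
Your proof is correct and matches the paper's (implicit) argument: the corollary is stated without proof precisely because it follows, as you argue, from the natural isomorphism $\cchi_{\CC}\circ f^*\cong \Nsc(f)^*\circ\cchi_{\DD}$ of the preceding proposition together with uniqueness of left adjoints of composites. Your bookkeeping of the composition order and the identification of the horizontal left adjoints are both accurate.
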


\subsection{Comparison for simplices}
\label{subsec:simplices}

We now provide a map\glsadd{etaC}
\begin{equation}\label{eq:natural}
\eta_{\CC}(X): \St_{\CC}(X) \lra \PPhi_{\CC}(X)
\end{equation}
for the following choices of $\CC$ and $X$:
\begin{enumerate}
	\item $\CC = \OO^n$ and $X = (\Delta^n)^{\flat}$,
	\item $\CC = \OO^1$ and $X = (\Delta^1)^{\sharp}$.
\end{enumerate}
Following \cite[3.2.5.10]{LurieHTT}, it will then be shown that these choices canonically extend to
determine maps $\eta_{\CC}(X): \St_{\CC}(X) \lra \PPhi_{\CC}(X)$, for all $\CC$ and $X \in
(\msSet)_{/\Nsc(\CC)}$.

\begin{enumerate}
	\item $\CC = \OO^n$ and $X = (\Delta^n)^{\flat}$. Per Lemma \ref{lem:adjtoLambd}, we have that $\PPhi_{\OO^n}((\Delta^n)^{\flat})\cong \mathfrak{D}^n$. 
	To determine the value of the straightening functor, note that formula \eqref{eq:simcat} yields the
	$\msSet$-enriched category 
	\[
	\OO^n( (\Delta^n)^{\flat}) = \OO^n \coprod_{\Cfsc[\Delta^n_{\flat} \times \{0\}]} \Cfsc[\Delta^n_{\flat}
	\times \Delta^1] \coprod_{\Cfsc[\Delta^n_{\flat} \times \{1\}]} \Cfsc[\{*\}].
	\]
	For $i,j \in \OO^n$, the marked simplicial set
	\[
	\Map_{\Cfsc[\Delta^n_{\flat} \times \Delta^1]}((i,0),(j,1))
	\]
	can be identified with the nerve of the poset $P(i,j)$ of chains in the poset $[n] \times [1]$ between
	$(i,0)$ and $(j,1)$, ordered by refinement. We define the map
	\[
		P(i,j)^{\op} \lra D^{[i,n]}, C \mapsto C_0 \cup \min(C_1)
	\]
	where we set $C_i := C \cap ([n] \times \{i\})$. Passing to nerves, one verifies that this map
	factors to define a map 
	\begin{equation}\label{eq:map}
	\Map_{\OO^n( (\Delta^n)^{\flat})}(i,*)^{\op} \lra \D^{[i,n]}
	\end{equation}
	natural in $i$. This yields the desired map
	\begin{equation}\label{eq:nsimplex}
	\St_{\OO^n}(\Delta^n_{\flat}) \lra \PPhi_{\OO^n}(\Delta^n_{\flat}).
	\end{equation}
	
	\item $\CC = \OO^1$ and $X = (\Delta^1)^{\sharp}$. On underlying unmarked simplicial sets,
	we may use the maps \eqref{eq:map} to define the desired map
	\begin{equation}\label{eq:1simplex}
	\St_{\OO^1}( (\Delta^1)^{\sharp}) \lra \PPhi_{\OO^1}( (\Delta^1)^{\sharp}).
	\end{equation}
	We then conclude by observing that the markings are compatible as well.
\end{enumerate}

\subsection{Comparison for simplicial sets}

We now extend the comparison maps \eqref{eq:natural} from simplices to simplicial sets following a
standard technique from \cite{LurieHTT}.

\begin{proposition}\label{prop:natural} There exists a unique family of natural transformations
	\begin{equation}\label{eq:naturaltrafo}
	\eta_{\CC}(X): \St_{\CC}(X) \lra \PPhi_{\CC}(X)
	\end{equation}
	indexed by pairs $(\CC, X)$ where $\CC$ is a $2$-category and $X \in (\msSet)_{/\Nsc(\CC)}$
	with the following properties:
	\begin{enumerate}[label=(\arabic*)]
		\item For every map $g: X \to Y$ of marked simplicial sets over $\Nsc(\CC)$, the
		diagram
		\[
		\begin{tikzcd}
		\St_{\CC}(X) \ar[swap]{d}{\St_{\CC}(g)}\ar{r}{\eta_{\CC}(X)} &
		\PPhi_{\CC}(X) \ar{d}{\PPhi_{\CC}(g)}\\
		\St_{\CC}(Y) \ar{r}{\eta_{\CC}(Y)} & \PPhi_{\CC}(Y)\\
		\end{tikzcd}
		\]
		commutes. 
		\item For every $2$-functor $f: \CC \to \DD$, the diagram
		\[
			\begin{tikzcd}[column sep=2cm]
		f_! \St_{\CC}(X) \ar[swap]{d}{\cong} \ar{r}{f_! \circ \eta_{\CC}(X)} &
		f_! \PPhi_{\CC}(X) \ar{d}{\cong}\\
		\St_{\DD}(\Nsc(f)_! X) \ar{r}{\eta_{\CC}(\Nsc(f)_! X)} & \PPhi_{\DD}(\Nsc(f)_! X)\\
		\end{tikzcd}
		\]
		commutes. 
		\item For $\CC = \OO^n$ and $X = (\Delta^n)^{\flat}$, the map $\eta_{\CC}(X)$
		coincides with the map \eqref{eq:nsimplex}.
		\item For $\CC = \OO^1$ and $X = (\Delta^1)^{\sharp}$, the map $\eta_{\CC}(X)$
		coincides with the map \eqref{eq:1simplex}.
	\end{enumerate}
\end{proposition}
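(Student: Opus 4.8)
The plan is to follow the standard strategy from \cite[3.2.5.10]{LurieHTT} by exploiting the fact that both $\St_{\CC}$ and $\PPhi_{\CC}$ are colimit-preserving functors, so that a natural transformation between them is determined by its values on representable objects. First I would note that any marked simplicial set $X \in (\msSet)_{/\Nsc(\CC)}$ can be written canonically as a colimit of its simplices: writing $X$ as the colimit over the category of simplices $(\Delta/X)$ of the representable pieces, each nondegenerate simplex $\Delta^k \to X$ over $\Nsc(\CC)$ factors through a $2$-functor $f_\sigma: \OO^k \to \CC$ and corresponds to the object $\Nsc(f_\sigma)_!\, (\Delta^k)^\flat$. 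Since $\St_{\CC}$ is a left adjoint (it arises from the straightening construction, which preserves colimits) and $\PPhi_{\CC}$ is by construction the left adjoint in \eqref{eq:adjunction}, both functors carry this colimit presentation to a colimit in $\Fun_{\msSet}(\CC^{(\op,\op)},\msSet)$. Thus property (1) forces $\eta_{\CC}(X)$ to be the colimit of the maps $\eta_{\CC}(\Nsc(f_\sigma)_!(\Delta^k)^\flat)$, and property (2) reduces each of these, via base change along $f_\sigma$, to the single datum $\eta_{\OO^k}((\Delta^k)^\flat)$ supplied by \eqref{eq:nsimplex}. This establishes \emph{uniqueness} immediately.

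For \emph{existence}, I would turn this forcing around into a definition. Define $\eta_{\CC}(X)$ as the induced map on colimits, using the comparison maps \eqref{eq:nsimplex} on simplices as the building blocks and Corollary \ref{cor:basechange} to transport them along the structure $2$-functors $f_\sigma$. The content to verify is then that these building blocks are compatible with the two kinds of maps in the diagram category of simplices, namely the face and degeneracy maps $\Delta^j \to \Delta^k$ and the induced $2$-functors $\OO^j \to \OO^k$. Concretely, for each such map one must check that the square relating $\eta_{\OO^j}((\Delta^j)^\flat)$ and $\eta_{\OO^k}((\Delta^k)^\flat)$ commutes; this amounts to unwinding the explicit formula \eqref{eq:map}, $P(i,j)^{\op} \to D^{[i,n]}$, $C \mapsto C_0 \cup \min(C_1)$, and observing that it is natural with respect to restriction to faces of the simplex and to the poset identifications $D^{[i,k]} \cong D^{[i,j]}$ coming from Proposition \ref{prop:GIposet}. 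Once this cosimplicial compatibility is established, the colimit defines $\eta_{\CC}(X)$ and properties (1) and (2) hold by general nonsense about colimit-preserving functors and base change.

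The treatment of the marked case, property (4), requires separate attention since the $\Delta^1$ with its nondegenerate edge marked is not built from the flat simplices alone. Here I would invoke the second generating object: in the category of marked simplicial sets over $\Nsc(\OO^1)$, the object $(\Delta^1)^\sharp$ is the pushout exhibiting that the unique nondegenerate edge is marked, and the map \eqref{eq:1simplex} was already constructed at the end of \S\ref{subsec:simplices} by checking that the markings are compatible. The general $X$ is then presented as a colimit involving both generators $(\Delta^k)^\flat$ and $(\Delta^1)^\sharp$, and the same colimit-extension argument applies. The main obstacle, in my estimation, will be the bookkeeping in verifying the cosimplicial naturality of the map \eqref{eq:map} under the two classes of simplicial operators: one must be careful that the poset map $C \mapsto C_0 \cup \min(C_1)$ interacts correctly with the way face maps embed $D^{[i,j]}$ as a subposet of $D^{[i,k]}$, and that degeneracies do not introduce incompatibilities with the chain-refinement ordering used to describe $\Map_{\OO^n((\Delta^n)^\flat)}(i,*)$. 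This is routine but delicate, and it is the one place where a genuine calculation with the combinatorics of $D^n$ cannot be avoided.
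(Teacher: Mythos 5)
Your proposal is correct and follows essentially the same route as the paper, which itself only sketches the argument by deferring to \cite[Rem 3.2.5.10]{LurieHTT}: reduce to the universal cases $(\Delta^n)^\flat$ over $\Nsc(\OO^n)$ and $(\Delta^1)^\sharp$ over $\Nsc(\OO^1)$ via base change (Corollary \ref{cor:basechange}), verify cosimplicial naturality of the explicit maps \eqref{eq:map}, and extend to all of $(\msSet)_{/\Nsc(\CC)}$ by the colimit presentation, using that both $\St_{\CC}$ and $\PPhi_{\CC}$ are left adjoints. You correctly identify the one genuinely computational step (compatibility of $C \mapsto C_0 \cup \min(C_1)$ with the simplicial operators), which is exactly what the paper's phrase ``shows naturality on the full subcategory on those objects'' refers to.
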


The proof of Proposition \ref{prop:natural} is a routine application of arguments like those
of \cite[Rem 3.2.5.10]{LurieHTT}. One first uses base change to compute the value of
$\eta_{\OO^n}$ on any simplex of $\Nsc(\OO^n)$, and then shows naturality on the full
subcategory on those objects. The remainder of the argument is completely parallel to loc.
cit.

\begin{proposition}\label{prop:objectwisewe}
	For every $2$-category $\CC$ and every $X \in (\msSet)_{/\Nsc(\CC)}$, the map
	\[
		\eta_{\CC}(X): \St_{\CC}(X) \lra \PPhi_{\CC}(X)
	\]
	from \eqref{eq:naturaltrafo} is a weak equivalence with respect to the projective model
	structure. 
\end{proposition}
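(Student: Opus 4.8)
The strategy mirrors that of \cite[\S 3.2.5]{LurieHTT}. Both $\St_{\CC}$ and $\PPhi_{\CC}$ are left adjoints, hence preserve all colimits, and both send cofibrations to cofibrations: for $\St_{\CC}$ this holds because it is left Quillen, and for $\PPhi_{\CC}$ it is the content of Proposition \ref{prop:trfib} (which shows $\cchi_{\CC}$ preserves trivial fibrations). Since the projective model structure on $\Fun_{\msSet}(\CC^{(\op,\op)},\msSet)$ is left proper, the gluing lemma applies to pushouts along cofibrations. Filtering an arbitrary $X \in (\msSet)_{/\Nsc(\CC)}$ as the transfinite composite of pushouts along the generating cofibrations $(\partial\Delta^n)^\flat \hra (\Delta^n)^\flat$ and $(\Delta^1)^\flat \hra (\Delta^1)^\sharp$, and applying the gluing lemma at each stage, I would reduce the claim to the case where $X$ is a single nondegenerate cell: either $X = (\Delta^n)^\flat$ lying over an $n$-simplex of $\Nsc(\CC)$, or $X = (\Delta^1)^\sharp$ lying over an edge.

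Next I would eliminate the ambient $2$-category by base change. An $n$-simplex $\sigma\colon \Delta^n \to \Nsc(\CC)$ is the same datum as a $2$-functor $\sigma\colon \OO^n \to \CC$, and $(\Delta^n)^\flat$ over $\Nsc(\CC)$ is $\Nsc(\sigma)_!$ applied to the universal cell $(\Delta^n)^\flat$ over $\Nsc(\OO^n)$. By property (2) of Proposition \ref{prop:natural} together with Corollary \ref{cor:basechange}, the map $\eta_{\CC}((\Delta^n)^\flat)$ is then canonically identified with $\sigma_!\,\eta_{\OO^n}((\Delta^n)^\flat)$. The pushforward $\sigma_!$ is left Quillen, being left adjoint to the restriction $\sigma^*$, which preserves objectwise fibrations and weak equivalences and is thus right Quillen for the projective structures. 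As $\St_{\OO^n}$ and $\PPhi_{\OO^n}$ are left Quillen (resp.\ cofibration-preserving) and $(\Delta^n)^\flat$ is cofibrant, both $\St_{\OO^n}((\Delta^n)^\flat)$ and $\mathfrak{D}^n$ are cofibrant; hence Ken Brown's lemma shows $\sigma_!$ carries the weak equivalence $\eta_{\OO^n}((\Delta^n)^\flat)$ to a weak equivalence. It therefore suffices to treat the two universal cases $\bigl(\OO^n,(\Delta^n)^\flat\bigr)$ and $\bigl(\OO^1,(\Delta^1)^\sharp\bigr)$. The second is a direct verification, the markings matching by construction (cf.\ \S \ref{subsec:simplices}), so the crux is the first.

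For the universal simplex I would induct on $n$, the cases $n \le 1$ being immediate computations. For $n \ge 2$, fix $0 < i < n$ and consider the square obtained by applying $\eta_{\OO^n}$ (property (1) of Proposition \ref{prop:natural}) to the inner horn inclusion $(\Lambda^n_i)^\flat \hra (\Delta^n)^\flat$:
\[
\begin{tikzcd}
\St_{\OO^n}((\Lambda^n_i)^\flat) \ar{r}\ar{d} & \mathfrak{L}^n_i \ar{d}\\
\St_{\OO^n}((\Delta^n)^\flat) \ar{r} & \mathfrak{D}^n,
\end{tikzcd}
\]
where the right-hand vertices are identified via Lemma \ref{lem:adjtoLambd}. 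By Proposition \ref{prop:innhorn} the right vertical map is a trivial cofibration. The left vertical map is $\St_{\OO^n}$ applied to the marked inner horn inclusion, which is a trivial cofibration in the scaled Cartesian model structure (inner anodyne maps being trivial cofibrations); since $\St_{\OO^n}$ is left Quillen, its image is again a trivial cofibration. The top horizontal map $\eta_{\OO^n}((\Lambda^n_i)^\flat)$ is a weak equivalence because $\Lambda^n_i$ is glued from its faces $\Delta^J$ with $J \subsetneq [n]$, so the two reductions above, now applied inside $\Lambda^n_i$ and invoking the inductive hypothesis for the smaller simplices $\OO^J$, make $\eta$ a weak equivalence on each face and hence, by the gluing lemma, on $(\Lambda^n_i)^\flat$. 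As the square commutes and three of its sides are weak equivalences, two-out-of-three forces the bottom map $\eta_{\OO^n}((\Delta^n)^\flat)$ to be a weak equivalence, completing the induction.

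The two formal reductions run exactly parallel to \cite[Rem.\ 3.2.5.10]{LurieHTT} and are routine. The genuine obstacle is the universal simplex case, and specifically the input supplied by Proposition \ref{prop:innhorn}: that $\PPhi_{\OO^n}$ sends inner horn inclusions to projective trivial cofibrations. This is precisely the contractibility computation of Theorem \ref{thm:contractible}, in which the homotopy types of the straightening mapping spaces $\Cf[\Lni](S,T)$ are controlled through the poset combinatorics of $D^n$; it is this step, rather than the surrounding bookkeeping, that carries the weight of the argument. One further point to verify carefully is that the marked inner horn inclusion is genuinely a trivial cofibration on the $\St$ side, so that left-Quillen-ness of $\St_{\OO^n}$ can be invoked for the left vertical map.
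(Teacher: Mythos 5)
Your argument is correct and takes essentially the same route as the paper: the paper verifies $\eta_{\CC}$ on the cells $(\Delta^n)^\flat$ and $(\Delta^n)^\sharp$ for $n\le 1$ and then invokes the general criterion of Lemma \ref{lem:eq}, whose proof is precisely your combination of skeletal filtration, gluing along cofibrations via left properness, base change to the universal simplex, and the two-out-of-three argument on the inner horn square powered by Proposition \ref{prop:innhorn}. The only difference is organizational --- the paper packages the induction into the reusable Lemma \ref{lem:eq} (later reapplied for Theorem \ref{thm:3}) rather than inlining it as you do.
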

\begin{proof}
	We first show that $\eta_{\CC}$ is a weak equivalence on objects of the
	form 
	\[
	(\Delta^n)^{\flat} \to \Nsc(\CC), \quad\text{$n \le 1$,}
	\]
	and
	\[
	(\Delta^n)^{\sharp} \to \Nsc(\CC), \quad\text{$n \le 1$.}
	\]
	The case $n=0$ is trivial. For $n=1$, since left Kan extension preserves weak
	equivalences between cofibrant objects, we reduce to the case of the identity map
	$\id: (\Delta^1)^{\flat} \to \Nsc(\OO^1)$. Then we have
	\[
	\St_{\OO^1}((\Delta^1)^{\flat})(1) \cong
	\PPhi_{\OO^1}((\Delta^1)^{\flat})(1) \cong \Delta^0.
	\] 
	Similarly, one observes that,
	\[
	\St_{\OO^1}((\Delta^1)^{\flat})(0) \cong (\Lambda^2_2)^{\dag} 
	\]
	where $(\Lambda^2_2)^{\dag}$ denotes the right horn with degenerate edges and $\{1,2\}$
	marked, and the map to $\D^{[0,1]}$ is given by collapsing the marked edge, hence a marked
	equivalence. 
	The case of $(\Delta^1)^{\sharp} \to \Nsc(\OO^1)$ follows analogously,
	keeping track of the new marked edges.
	The claim now follows from Proposition \ref{prop:innhorn} and Lemma \ref{lem:eq} below.
\end{proof}

\begin{lemma}\label{lem:eq}
	Let $\overline{K}=(K,K^{\th})$ be a scaled simplicial set. Suppose we are given two left adjoint functors, 
	\[
	 	L_1,L_2: (\msSet)_{/\overline{K}} \to C,
	\]
	where $C$ is a left proper combinatorial model category and $L_1$ is a left Quillen functor.
	Suppose further that $L_2$ preserves cofibrations and that it maps the morphisms,
	\begin{equation}\label{eq:hornfillcond}
		(\Lambda^n_i)^{\flat} \to (\Delta^n)^{\flat}, \enspace n \geq 2, \enspace 0<i<n,
	\end{equation}
	to weak equivalences. Given a natural transformation
	$\eta: L_1 \Rightarrow L_2$ which is a weak equivalence on objects of the form
	\[
	(\Delta^n)^{\flat} \to K \quad \text{$n \leq 1$,}
	\]
	and	
	\[
	(\Delta^n)^{\sharp} \to K, \quad \text{$n \leq 1$.}
	\]
	Then $\eta$ is a levelwise weak equivalence.
\end{lemma}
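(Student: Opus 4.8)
The plan is to run a cellular induction that reduces the claim to the generating cofibrations of $(\msSet)_{/\overline{K}}$ and then bootstrap through dimensions using the inner-horn hypothesis. Write $\mathcal{W}$ for the class of objects $X\to K$ such that $\eta_X\colon L_1(X)\to L_2(X)$ is a weak equivalence. Since $L_1$ and $L_2$ are left adjoints they preserve all colimits, and by hypothesis both preserve cofibrations; as $C$ is left proper and combinatorial, the gluing lemma for pushouts along cofibrations, together with the stability of weak equivalences under the filtered colimits that index a cell complex (which holds since $C$ is combinatorial), shows that $\mathcal{W}$ is closed under the cell-attachment operations out of which every object of $(\msSet)_{/\overline{K}}$ is built as a cell complex. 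Consequently it suffices to prove that $\mathcal{W}$ contains the domains and codomains of a generating set of cofibrations, namely the objects $(\partial\Delta^n)^\flat$ and $(\Delta^n)^\flat$ (for $n\ge 0$) together with $(\Delta^1)^\flat$ and $(\Delta^1)^\sharp$, each equipped with an arbitrary structure map to $K$.

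First I would dispose of the markings and the low dimensions. The objects $(\Delta^0)^\flat=(\Delta^0)^\sharp$, $(\Delta^1)^\flat$, and $(\Delta^1)^\sharp$ lie in $\mathcal{W}$ by hypothesis, which already handles both endpoints of the generating cofibration $(\Delta^1)^\flat\to(\Delta^1)^\sharp$. It then remains to show $(\Delta^n)^\flat\in\mathcal{W}$ for every $n\ge 2$; once this is known, $(\partial\Delta^n)^\flat\in\mathcal{W}$ follows automatically, since $\partial\Delta^n$ is assembled from its faces, all of dimension $<n$, so that the cell-attachment closure of $\mathcal{W}$ applies. I would prove $(\Delta^n)^\flat\in\mathcal{W}$ by induction on $n$, the cases $n\le 1$ being the base cases just established. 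Fix $n\ge 2$ and choose an inner horn inclusion $\iota\colon(\Lambda^n_i)^\flat\to(\Delta^n)^\flat$ with $0<i<n$. Because $\Lambda^n_i$ is built out of simplices of dimension $<n$, the induction hypothesis and the cell-attachment closure give $(\Lambda^n_i)^\flat\in\mathcal{W}$.

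The inductive step is then a two-out-of-three argument applied to the naturality square
\[
\begin{tikzcd}
L_1((\Lambda^n_i)^\flat) \arrow[r, "\eta"] \arrow[d, "L_1(\iota)"'] & L_2((\Lambda^n_i)^\flat) \arrow[d, "L_2(\iota)"]\\
L_1((\Delta^n)^\flat) \arrow[r, "\eta"] & L_2((\Delta^n)^\flat).
\end{tikzcd}
\]
The top horizontal arrow is a weak equivalence since $(\Lambda^n_i)^\flat\in\mathcal{W}$; the right vertical arrow is a weak equivalence by the hypothesis \eqref{eq:hornfillcond}; and the left vertical arrow is a weak equivalence because $\iota$ is a trivial cofibration in the scaled Cartesian model structure and $L_1$ is left Quillen. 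By two-out-of-three the bottom arrow $\eta_{(\Delta^n)^\flat}$ is a weak equivalence, which closes the induction and, via the reduction of the first paragraph, proves the lemma.

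The only genuinely delicate points are infrastructural rather than conceptual. One must verify that the gluing lemma truly applies at each cell attachment: this uses that $L_1$ and $L_2$ carry the generating cofibrations to cofibrations in $C$ and that $C$ is left proper, so that pushouts of weak equivalences along cofibrations stay weak equivalences, while stability under the transfinite colimits indexing a general cell complex uses combinatoriality of $C$. The other input to be checked is that the flat inner-horn inclusions are trivial cofibrations for the scaled Cartesian model structure on $(\msSet)_{/\overline{K}}$ (the scaled analogue of the marked-anodyne statement, cf. \cite{LurieGoodwillie}), so that the left-Quillen property of $L_1$ forces $L_1(\iota)$ to be a weak equivalence; this is exactly the place where the hypothesis that $L_1$ be left Quillen, and not merely cofibration-preserving, is indispensable.
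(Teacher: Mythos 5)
Your proposal is correct and follows essentially the same route as the paper's proof: reduce to cells via filtered colimits and the gluing lemma (using left properness, combinatoriality, and cofibration preservation), reduce to minimally marked objects via pushouts along $(\Delta^1)^\flat\to(\Delta^1)^\sharp$, and then induct on dimension using two-out-of-three on the naturality square for an inner horn inclusion, where $L_1$ being left Quillen and the hypothesis on $L_2$ supply the vertical equivalences. The only cosmetic difference is that you organize the reduction around the class $\mathcal{W}$ and the generating cofibrations, whereas the paper phrases it as a skeletal induction; the content is identical.
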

\begin{proof}
	Every marked simplicial set over $X$ can be expressed as a filtered colimit of its skeleta.
	Since $C$ is a combinatorial model category, weak equivalences are stable under
	filtered colimits. This shows that we can reduce to the case of objects,
	\[
	X \lra K, \quad\text{$X$ finite dimensional.}
	\]
	In addition we know that every simplicial set can be expressed as a filtered colimit of
	simplicial sets containing only finitely many non-degenerate simplices, allowing us to
	further reduce to the latter case. Recall that given a pushout diagram,
	\[
	\begin{tikzcd}
	X_1 \arrow[r] \arrow[d] & X_2 \arrow[d] \\
	Y_1 \arrow[r] & Y_2
	\end{tikzcd}
	\]
	in $(\msSet)_{/ \overline{K}}$ with one of the morphisms $X_1 \to Y_1$ or $X_1\to X_2$ a
	cofibration, then since $L_1$ and $L_2$ preserve cofibrations and $C$ is left proper
	it follows that if the components of the natural transformation associated to
	$X_1,X_2,Y_1$ are weak equivalences so it is the component at $Y_2$. 
	
	Any marked simplicial set $X$ with finitely many non degenerate simplices can by obtained
	from $X^{\flat}$ by a finite sequence of pushouts along the monomorphism $(\Delta^1)^{\flat}
	\to (\Delta^{1})^{\sharp}$. This implies that we can reduce to the case of minimally marked
	simplicial sets.
	
	We claim that if the natural transformation is a weak equivalence at $(\Delta^n)^{\flat}$
	for $n\geq 0$, then the result follows in general. In order to see this, we proceed by
	induction on the dimension of our simplicial sets. The case $n=0$ is clearly true. We prove
	now the case $n$, provided that the result holds for $n-1$. Since we can assume that our
	simplicial set $X$ has finitely many non degenerate simplices, we can express it as an
	iterated pushout of $\sk_{n-1}(X)$ along the cofibration $(\partial \Delta^{n})^{\flat} \to
	(\Delta^n)^{\flat}$ and the claim holds.
	
	We claim that the natural transformation is a weak equivalence for $(\Delta^n)^{\flat}$ with
	$n \geq 0$. We proceed by induction. The claim holds for $n \leq 1$ by hypothesis. Suppose
	that the claim holds for $n-1$. Then an iterated pushout along cofibrations shows that the
	component of the natural transformation at $(\Lambda^{n}_i)^{\flat}$ is a weak equivalence
	for every $0 \leq i \leq n$. Observe that in the model structure associated to
	$\overline{K}$, minimally marked inner horn inclusions are trivial cofibrations. It follows
	that the image of those maps under $L_1$ is a trivial cofibration, and by assumption the same is true of $L_2$. Therefore the
	result follows from commutativity of the diagram
	\[
	\begin{tikzcd}
	L_1((\Lambda^{n}_i)^{\flat}) \arrow[r] \arrow[d] & L_1((\Delta^n)^{\flat}) \arrow[d] \\
	L_2((\Lambda^{n}_i)^{\flat}) \arrow[r] & L_2((\Delta^n)^{\flat})
	\end{tikzcd}
	\]
	and 2-out-of-3.
\end{proof}

\begin{remark}
	Under the assumption that $L_2$ is also a left Quillen functor, preservation of trivial
	cofibrations implies that the morphisms \ref{eq:hornfillcond} are mapped to weak
	equivalences. Consequently, Lemma \ref{lem:eq} yields a criterion allowing us to quickly
	check when two left Quillen functors are equivalent. We will reapply the lemma in this
	situation when proving Theorem \ref{thm:3} below.
\end{remark}

\begin{corollary}\label{cor:chiisQE}
	Let $\CC$ be a $2$-category. Then the adjunction 
	\[
	\PPhi_{\CC}: (\msSet)_{/\Nsc(\CC)} \llra \Fun_{\msSet}(\CC^{(\op,\op)}, \msSet): \cchi_{\CC},
	\]
	 is a Quillen equivalence. 
\end{corollary}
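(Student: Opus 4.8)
The plan is to reduce the statement to the already-established comparison with the contravariant scaled straightening functor $\St_\CC$ of Definition \ref{defn:CSt}, which is a left Quillen equivalence by construction: it is obtained from Lurie's scaled straightening equivalence $\CStsc$ by pre- and post-composing with the Quillen self-equivalences induced by $\op$, as recorded in \eqref{eq:CSTcomposite}. All the substantive work has been carried out in the preceding sections, so what remains is formal. Concretely, Proposition \ref{prop:objectwisewe} provides a natural transformation $\eta_\CC\colon \St_\CC \Rightarrow \PPhi_\CC$ which is an objectwise weak equivalence, and I would leverage this together with Proposition \ref{prop:trfib} to transport the Quillen-equivalence property from $\St_\CC$ to $\PPhi_\CC$.

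The first step is to verify that $\PPhi_\CC$ is a left Quillen functor. Preservation of cofibrations is immediate from Proposition \ref{prop:trfib}: by adjunction, the assertion that $\cchi_\CC$ sends trivial fibrations to trivial fibrations is equivalent to the assertion that $\PPhi_\CC$ sends cofibrations to cofibrations. For the remaining condition I would argue that $\PPhi_\CC$ preserves trivial cofibrations. Given a trivial cofibration $i\colon X \to Y$ in $(\msSet)_{/\Nsc(\CC)}$, naturality of $\eta_\CC$ (Proposition \ref{prop:natural}(1)) yields a commuting square
\[
\begin{tikzcd}
\St_\CC(X) \ar{r}{\eta_\CC(X)}\ar[swap]{d}{\St_\CC(i)} & \PPhi_\CC(X)\ar{d}{\PPhi_\CC(i)}\\
\St_\CC(Y) \ar{r}{\eta_\CC(Y)} & \PPhi_\CC(Y).
\end{tikzcd}
\]
Its horizontal arrows are weak equivalences by Proposition \ref{prop:objectwisewe}, and $\St_\CC(i)$ is a weak equivalence because $\St_\CC$ is left Quillen; hence $\PPhi_\CC(i)$ is a weak equivalence by 2-out-of-3. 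Since $\PPhi_\CC(i)$ is already a cofibration by the previous paragraph, it is a trivial cofibration, as required.

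With $\PPhi_\CC$ now a left Quillen functor, I would conclude by a derived-functor comparison. Because the cofibrations in $(\msSet)_{/\Nsc(\CC)}$ are exactly the monomorphisms, every object is cofibrant, so Proposition \ref{prop:objectwisewe} says that $\eta_\CC$ is a weak equivalence on all cofibrant objects. A natural transformation between left Quillen functors that is a pointwise weak equivalence on cofibrant objects descends to a natural isomorphism of total left derived functors, giving $\mathbb{L}\St_\CC \cong \mathbb{L}\PPhi_\CC$ (this is the content of Theorem \ref{introthm:2}). Since $\St_\CC$ is a Quillen equivalence, $\mathbb{L}\St_\CC$ is an equivalence of homotopy categories, and therefore so is $\mathbb{L}\PPhi_\CC$; a Quillen adjunction whose left derived functor is an equivalence of homotopy categories is a Quillen equivalence. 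This yields the corollary.

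The corollary itself presents no real obstacle: the entire difficulty of the paper lives upstream, in Theorem \ref{thm:contractible} (which powers Proposition \ref{prop:innhorn}) and in establishing that $\eta_\CC$ is an objectwise weak equivalence. The one point in the present argument that merits a moment's attention is the claim that the contravariant $\St_\CC$ is a left Quillen equivalence: this requires checking that the $\op$-manipulations in \eqref{eq:CSTcomposite} are compatible with the relevant model structures, i.e. that applying $\op$ to all Hom-spaces (as discussed after \eqref{eq:CSTcomposite}) transports Lurie's coCartesian-to-projective Quillen equivalence to the Cartesian-to-projective one we use here. Once this bookkeeping is settled, the argument above is entirely formal.
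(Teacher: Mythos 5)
Your argument is correct and follows essentially the same route as the paper: cofibration preservation via Proposition \ref{prop:trfib} by adjunction, trivial-cofibration preservation by 2-out-of-3 against $\St_\CC$ through the objectwise equivalence $\eta_\CC$ of Proposition \ref{prop:objectwisewe}, and the Quillen-equivalence property transported from $\St_\CC$ via the identification of derived functors. Your closing remark about checking the $\op$-bookkeeping in \eqref{eq:CSTcomposite} is a reasonable extra caution, but the paper simply takes this for granted when citing Lurie's theorem.
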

\begin{proof}
	By Proposition \ref{prop:trfib}, $\cchi_{\CC}$ preserves trivial fibrations so that $\PPhi_{\CC}$ preserves cofibrations. As a
	left Quillen functor, the functor $\St_{\CC}$ preserves weak equivalences. By
	$2$-out-of-$3$, we deduce from Proposition \ref{prop:objectwisewe} the $\PPhi_{\CC}$
	preserves weak equivalences as well. In particular, $\PPhi_{\CC}$ preserves trivial
	cofibrations, so that $\PPhi_{\CC}$ is a left Quillen functor. The fact that $\PPhi_{\CC}$
	is in fact a left Quillen equivalence also follows immediately from Proposition
	\ref{prop:objectwisewe}, since $\St_{\CC}$ is a left Quillen equivalence by \cite[Thm
	3.8.1]{LurieGoodwillie}.
\end{proof}

It is immediate that $\cchi_{\CC}$ defines an equivalence between the $\infty$-categories associated
to the model categories $\Fun_{\msSet}(\CC^{(\op,\op)}, \msSet)$ and $(\msSet)_{/\Nsc(\CC)}$,
respectively, by means of $\infty$-categorical localization.

Since both model categories are in fact simplicial model categories, the $\infty$-categories thus
obtained can be described explicitly as the coherent nerves of the simplicial subcategories of
fibrant-cofibrant objects. In this context, it becomes desirable to lift the Quillen
equivalence to a simplicially enriched Quillen equivalence so as to obtain a description of the
equivalences induced by $\cchi_{\CC}$ and $\PPhi_{\CC}$ via coherent nerves.  Unfortunately, this is
not possible. However, we at least have the following result: 

\begin{proposition}
	The functor $\cchi_{\CC}$ can be naturally extended to a simplicially enriched functor
	$\widetilde{\cchi_{\CC}}$. In particular, we obtain the explicit description
	\[
		\N(\widetilde{\cchi_{\CC}}): \N(\Fun_{\msSet}(\CC^{(\op,\op)},\msSet)^\circ) \lra
		\N((\msSet)_{/\Nsc(\CC)}^\circ)
	\]
	of the equivalence of $\infty$-categories induced by $\cchi_{\CC}$. 
\end{proposition}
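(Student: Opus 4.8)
The plan is to enrich $\cchi_{\CC}$ by hand through a tensorial comparison map, and then to read off the statement about $\infty$-categories from the standard formalism of simplicial model categories. Recall from \S\ref{subsec:marked} that the $\Map^\sharp$-enrichments on the two sides are adjoint, respectively, to the objectwise tensoring $(K,F)\mapsto \underline{K^{\sharp}}\times F$ on $\Fun_{\msSet}(\CC^{(\op,\op)},\msSet)$ (here $\underline{K^{\sharp}}$ is the constant functor with value $K^{\sharp}$) and to $(K,X)\mapsto K^{\sharp}\times X$ on $(\msSet)_{/\Nsc(\CC)}$. Consequently, to promote $\cchi_{\CC}$ to a simplicial functor $\widetilde{\cchi_{\CC}}$ it suffices to produce a natural strength map
\[
c_{K,F}\colon K^{\sharp}\times\cchi_{\CC}(F)\lra \cchi_{\CC}(\underline{K^{\sharp}}\times F)
\]
over $\Nsc(\CC)$, natural in $K\in\sSet$ and $F$ and coherent for the monoidal structure of $\sSet$; the induced maps $\Map^\sharp(F,G)\to\Map^\sharp(\cchi_{\CC}(F),\cchi_{\CC}(G))$ defining $\widetilde{\cchi_{\CC}}$ are then obtained by sending an $n$-simplex $\alpha\colon\underline{(\Delta^n)^{\sharp}}\times F\to G$ of the source to the composite $\cchi_{\CC}(\alpha)\circ c_{\Delta^n,F}$.

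First I would construct $c_{K,F}$ directly on simplices from Definition \ref{defi:rel2nerve}. An $n$-simplex of $K^{\sharp}\times\cchi_{\CC}(F)$ is a pair $(k,\xi)$ with $k\colon\Delta^n\to K$ and $\xi$ an $n$-simplex of $\cchi_{\CC}(F)$ given by $\sigma\colon\Delta^n\to\Nsc(\CC)$ together with maps $\theta_I\colon(\D^{I})^{\flat}\to F(\sigma(\min I))$. Keeping $\sigma$ fixed (so the output lies over $\Nsc(\CC)$), I set
\[
\theta'_I=\big(k\circ m_I,\ \theta_I\big)\colon (\D^{I})^{\flat}\lra K^{\sharp}\times F(\sigma(\min I)),
\]
where $m_I\colon\D^{I}\to\Delta^{I}\hra\Delta^n$ is the nerve of the order-preserving map $D^{I}\to I$, $S\mapsto\max(S)$; this is monotone by condition (1) of Proposition \ref{prop:GIposet}.

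The one point requiring care is the compatibility square of Definition \ref{defi:rel2nerve} for the new data $\{\theta'_I\}$. Its $F$-component holds because $\xi$ already satisfies it, while for the $K^{\sharp}$-component — along which the $\CC$-action is trivial, $\underline{K^{\sharp}}$ being constant — it reduces to the identity $\max(\rho_{J,I}(S_1,S_2))=\max(S_1\cup S_2)=\max(S_2)$, valid since $\max(S_1)=\min(J)\le\max(S_2)$. Preservation of markings is immediate (the $K^{\sharp}$-coordinate is always marked, so an edge is marked before and after exactly when its $F$-coordinate is), as are naturality in $K$ and $F$ and the associativity and unitality constraints witnessing that $c$ is monoidally coherent. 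I expect this bookkeeping to be the only real obstacle; there is no conceptual difficulty once the map $m_I=\N(\max)$ has been identified. This yields the simplicial functor $\widetilde{\cchi_{\CC}}$ extending $\cchi_{\CC}$.

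For the final assertion, I note that every object of $(\msSet)_{/\Nsc(\CC)}$ is cofibrant, since cofibrations are exactly the monomorphisms, whereas $\cchi_{\CC}$ is right Quillen by Corollary \ref{cor:chiisQE}. Hence $\widetilde{\cchi_{\CC}}$ sends a fibrant–cofibrant object $F$ to the fibrant object $\cchi_{\CC}(F)$, which is automatically cofibrant, and therefore restricts to a simplicial functor between the subcategories $(-)^{\circ}$ of fibrant–cofibrant objects. Applying the homotopy-coherent nerve produces the displayed functor $\N(\widetilde{\cchi_{\CC}})$, and by the standard comparison between simplicial Quillen adjunctions and homotopy-coherent nerves it computes the right derived functor $\mathbb{R}\cchi_{\CC}$; as the adjunction is a Quillen equivalence, $\N(\widetilde{\cchi_{\CC}})$ is the asserted equivalence of $\infty$-categories.
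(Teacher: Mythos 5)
Your proposal is correct and follows essentially the same route as the paper: both constructions enrich $\cchi_{\CC}$ by a strength map built from the map $p_I\colon D^I\to I$, $S\mapsto\max(S)$ of Equation \eqref{eq:p} (the paper packages this as a map $(\Delta^n)^{\sharp}\to\cchi_*((\Delta^n)^{\sharp})$, whereas you write the resulting map $c_{K,F}$ out on simplices and verify the compatibility square directly), and both then conclude via the standard theory of simplicial model categories together with Corollary \ref{cor:chiisQE}. The only caveat is at the very end: since the adjunction $\PPhi_{\CC}\dashv\cchi_{\CC}$ is \emph{not} simplicial (only the right adjoint is enriched), you should appeal to the fact that a simplicial functor preserving fibrant--cofibrant objects and weak equivalences between them models the induced functor on localizations, rather than to the comparison for simplicial Quillen adjunctions.
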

\begin{proof}
	We show that the simplicial enrichment $\widetilde{\cchi_{\CC}}$ exists.  
	Since both model categories are simplicial model categories the remaining statement follows
	from \cite[Remark A.3.1.9]{LurieHTT}, \cite[Proposition A.3.10]{LurieHTT} and Corollary
	\ref{cor:chiisQE}.
	Recall that the simplicial enrichment of the category 
	\[
		\Fun_{\msSet}(\CCop, \msSet)
	\]
	is adjoint to the tensor structure given by the formula
	\[
		(K , F) \mapsto K^{\sharp} \times F
	\]
	where the Cartesian product is taken pointwise. We then provide a map of simplicial sets
	\[
		\widetilde{\cchi_{\CC}}: \Map(F,G) \lra \Map(\cchi_{\CC}(F),\cchi_{\CC}(G)) 
	\]
	as follows: Given an $n$-simplex $\sigma: \Delta^n \to \Map(F,G)$, we apply $\cchi_{\CC}$ to the
	adjoint map
	\[
		(\Delta^n)^{\sharp} \times F \lra G
	\]
	to obtain
	\[
		\cchi_*((\Delta^n)^{\sharp}) \times \cchi_{\CC}(F) \mapsto \cchi_{\CC}(G)
	\]
	where $\cchi_*$ denotes the $2$-nerve relative to the final $2$-category $*$. We then
	precompose with the map $(\Delta^n)^{\sharp} \to \cchi_*((\Delta^n)^{\sharp})$, induced by
	pullback along the morphisms $p_I$ of posets from Equation \ref{eq:p} below to obtain a map
	\[
		(\Delta^n)^{\sharp} \times \cchi_{\CC}(F) \mapsto \cchi_{\CC}(G)
	\]
	and finally define $\widetilde{\cchi_{\CC}}(\sigma)$ to be the $n$-simplex adjoint to this latter map. This
	construction defines a map $\widetilde{\cchi_{\CC}}$ of simplicial sets and provides the desired
	$\sSet$-enrichment of $\cchi_{\CC}$.
\end{proof}

\subsection{Comparison of the $1$-categorical and $2$-categorical relative nerves}
\label{sec:1nerve}

We conclude with a comparison between the relative $1$-nerve and the relative $2$-nerve in the case
when $\CC = C$ is a $1$-category.
To relate the two nerves, consider, for an ordinal $I \subset [n]$, the map of posets \glsadd{pI}
\begin{equation}\label{eq:p}
	p_I: D^I \to I,\; S \mapsto \max(S)
\end{equation}
and denote its nerve by $\pi_I$. Given an $n$-simplex of $\chi(F)$, comprised of a
collection of compatible maps
\[
	\Delta^I \mapsto F(\sigma(\min(I))), 
\]
parametrized by all nonempty subsets $I \subset [n]$, we produce, by pulling back along the
various maps $\pi_I$, a collection of maps
\[
	\D^I \mapsto F(\sigma(\min(I))).
\]
Unravelling the definitions, using that $C$ is a $1$-category, it follows that this
collection is in fact compatible and defines an $n$-simplex of $\chi(F)$. Compatibility with
the simplicial structures yields a comparison map
\begin{equation}\label{eq:mapcomparison}
	\pi^{*}:\chi_{C}(F) \lra \cchi_{C}(F)
\end{equation}
of marked simplicial sets over $\N(C)$, natural in $F$. 

\begin{theorem}\label{thm:3}
	Let $C$ be a $1$-category. Then, for every functor $F: C^{\op} \to \Cat_{\infty}$,
	the morphism
	\[
		\pi^*: \chi_C(F) \lra \cchi_C(F),
	\]
	from \eqref{eq:mapcomparison} is an equivalence of Cartesian fibrations over
	$\N(C)$.
\end{theorem}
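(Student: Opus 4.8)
The plan is to reduce the statement to an application of Lemma \ref{lem:eq} to the two left adjoints $\phi_C$ (Lurie's relative nerve adjoint) and $\PPhi_C$. First I record that, for a $1$-category $C$, the scaled nerve $\Nsc(C)$ is $\N(C)$ with all $2$-simplices thin, $\CC^{(\op,\op)}=C^{\op}$, and the scaled Cartesian model structure on $(\msSet)_{/\Nsc(C)}$ coincides with Lurie's Cartesian model structure on $(\msSet)_{/\N(C)}$: the cofibrations agree (monomorphisms), and over a $1$-category a scaled Cartesian fibration is precisely a Cartesian fibration, since every $2$-simplex is thin and local Cartesian-ness together with Cartesian-ness over triangles is ordinary Cartesian-ness, so the fibrant objects agree as well. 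Consequently both adjunctions $(\phi_C\dashv\chi_C)$ and $(\PPhi_C\dashv\cchi_C)$ are Quillen equivalences between the same pair of model categories, namely $(\msSet)_{/\N(C)}$ and the projective $\Fun(C^{\op},\msSet)$. Thus $\PPhi_C$ and $\phi_C$ are directly comparable left Quillen functors, and ``$\pi^*$ is an equivalence of Cartesian fibrations'' is exactly the assertion that $\pi^*$ is a weak equivalence between the fibrant objects $\chi_C(F)$ and $\cchi_C(F)$ (here $F$ takes values in $\infty$-categories, hence is projectively fibrant, so both targets are fibrant).

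Since the map $\pi^*$ of \eqref{eq:mapcomparison} is natural in $F$, it is a natural transformation of right adjoints $\chi_C\Rightarrow\cchi_C$; let $\eta\colon\PPhi_C\Rightarrow\phi_C$ denote its mate, a natural transformation of left adjoints. I would then verify the hypotheses of Lemma \ref{lem:eq} with $L_1=\PPhi_C$ and $L_2=\phi_C$. The functor $L_1=\PPhi_C$ is left Quillen by Corollary \ref{cor:chiisQE}. The functor $L_2=\phi_C$ preserves cofibrations and, being left Quillen, sends the minimally marked inner horn inclusions $(\Lambda^n_i)^\flat\to(\Delta^n)^\flat$, which are trivial cofibrations in the Cartesian model structure, to weak equivalences. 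This is precisely the situation of the Remark following Lemma \ref{lem:eq}, so the genuinely homotopical input, supplied by Proposition \ref{prop:innhorn}, is already in place and no further horn-filling estimates are needed.

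It then remains to check that $\eta$ is a weak equivalence on the generating objects $(\Delta^n)^\flat\to\N(C)$ and $(\Delta^n)^\sharp\to\N(C)$ with $n\le 1$. This is where the explicit form of $\pi^*$ enters, and it is immediate: for $n=0$ one has $D^{\{0\}}=\{0\}$, while for $n=1$ the poset $D^{\{0,1\}}=\{0\le 01\}$ has nerve $\Delta^1$ and the map $p_{\{0,1\}}$ of \eqref{eq:p} is an isomorphism onto $\{0,1\}$. Hence, unwinding Definition \ref{defi:rel2nerve}, the map $\pi^*$ is a bijection (respecting markings) on the simplices lying over the $1$-skeleton of $\N(C)$. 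By the adjunction defining the mate, $\eta$ is therefore an isomorphism, in particular a weak equivalence, on each of the objects $(\Delta^n)^{\flat}$ and $(\Delta^n)^{\sharp}$ for $n\le 1$. Lemma \ref{lem:eq} now yields that $\eta\colon\PPhi_C\Rightarrow\phi_C$ is an objectwise weak equivalence.

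Finally, I would pass back through the mate correspondence. An objectwise weak equivalence between the left Quillen functors $\PPhi_C$ and $\phi_C$ induces a weak equivalence between their right adjoints on fibrant objects, and since taking the mate twice returns the original transformation, this derived comparison of right adjoints is represented on fibrant objects by the honest map $\pi^*$. Therefore $\pi^*\colon\chi_C(F)\to\cchi_C(F)$ is a Cartesian equivalence between fibrant objects of $(\msSet)_{/\N(C)}$, that is, an equivalence of Cartesian fibrations over $\N(C)$, and naturality in $F$ is inherited from that of $\pi^*$. The main point requiring care is the choice of the correct pair of comparable left Quillen functors, in particular establishing the coincidence of the scaled Cartesian and Cartesian model structures for a $1$-category, together with the bookkeeping of the mate correspondence; the hard homotopical content has been isolated into Proposition \ref{prop:innhorn} and is reused here through Lemma \ref{lem:eq}.
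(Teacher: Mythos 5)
Your proof is correct and follows essentially the same route as the paper: both pass to the mate transformation $\PPhi_C \Rightarrow \phi_C$ of left adjoints, apply Lemma \ref{lem:eq}, and conclude by observing that the comparison is an isomorphism on the generators $(\Delta^n)^{\flat}$ and $(\Delta^n)^{\sharp}$ for $n \le 1$. The only (minor) divergence is that the paper first reduces to the final $1$-category by checking the equivalence fiberwise, which lets it work entirely in the marked model structure on $\msSet$ and sidesteps your additional --- though correct --- identification of the scaled Cartesian and Cartesian model structures on $(\msSet)_{/\N(C)}$.
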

\begin{proof}
	Since both $\cchi_C(F)$ and $\chi_C(F)$ are Cartesian fibrations over $\N(C)$, it suffices to
	show that the morphism $\pi^*$ induces a marked equivalence of each fiber. By base change,
	we thus reduce to the case when $C$ is the final $1$-category. In this situation, the
	relative $1$-nerve is part of the Quillen equivalence 
	\[
		\phi_*: \msSet \llra \msSet: \chi_*
	\]
	where both $\phi$ and $\chi$ are the identity functors. The relative $2$-nerve is the right
	Quillen functor of the Quillen equivalence 
	\[
		\PPhi_*: \msSet \llra \msSet: \cchi_*
	\]
	from Corollary \ref{cor:chiisQE}.
	To show that the natural transformation $\pi^*: \chi_* \Rightarrow \cchi_*$ is a weak equivalence on fibrant
	objects, it suffices to show that the adjoint transformation $\pi_!: \PPhi_* \Rightarrow \phi_*$ is a weak
	equivalence. By Lemma \ref{lem:eq}, it suffices to verify that $\pi_!$ is a weak equivalence
	on the marked simplicial sets $(\Delta^0)^\flat$, $(\Delta^1)^{\flat}$, and
	$(\Delta^1)^{\sharp}$. In these cases, $\pi_!$ induces an isomorphism of marked
	simplicial sets, concluding the argument.
\end{proof}

\end{document}